\DeclareMathOperator{\vspan}{span}
\useunder{\uline}{\ul}{}
\begin{document}

\title{Dynamic Mode Decomposition for Continuous Time Systems with the Liouville Operator}

\author{Joel A. Rosenfeld         \and
        Rushikesh Kamalapurkar \and
        L. Forest Gruss \and
        Taylor T. Johnson
}

\maketitle

\begin{abstract}
Dynamic Mode Decomposition (DMD) has become synonymous with the Koopman operator, where continuous time dynamics are examined through a discrete time proxy determined by a fixed timestep using Koopman (i.e. composition) operators. Using the newly introduced ``occupation kernels,'' the present manuscript develops an approach to DMD that treats continuous time dynamics directly through the Liouville operator, which can include Koopman generators. This manuscript outlines the technical and theoretical differences between Koopman based DMD for discrete time systems and Liouville based DMD for continuous time systems, which includes an examination of these operators over several reproducing kernel Hilbert spaces (RKHSs). While Liouville operators are modally unbounded, this manuscript introduces the concept of a scaled Liouville operator, which for many dynamical systems yields a compact operator over the exponential dot product kernel's native space. Hence, norm convergence of the DMD procedure is established when using scaled Liouville operators, which is a decided advantage over Koopman based DMD methods.

\end{abstract}

\section{Introduction}

DMD has emerged as an effective method of extracting fundamental governing principles from high-dimensional time series data. The method has been employed successfully in the field of fluid dynamics, where DMD methods have demonstrated an ability to determine dynamic modes, also known as ``Koopman modes,'' which agree with Proper Orthogonal Decomposition (POD) analyses (cf. \cite{budivsic2012applied,mezic2019,kutz2016dynamic,mezic2005spectral,mezic2013analysis,williams2015data,williams2015kernel}). However, DMD methods employing Koopman operators do not address continuous time dynamical systems directly. Instead, current DMD methods analyze discrete time proxies of continuous time systems \cite{kutz2016dynamic}. The discretization process constrains Koopman based DMD methods to systems that are forward complete \cite{bittracher2015pseudogenerators}. The objective of the present manuscript is to develop DMD methods that avoid discretization of continuous time dynamical systems, while providing convergence results that are stronger than Koopman based DMD and applicable to a broader class of dynamical systems.

The connection between Koopman operators and DMD relies on the idea that a finite dimensional nonlinear dynamical system can be expressed as a linear operator over an infinite dimensional space to enable treatment of the nonlinear system via tools from the theory of linear systems and linear operators. The idea of lifting finite dimensional nonlinear systems into infinite dimensional linear ones has been successfully utilized in the literature to achieve various identification and control objectives; however, a few fundamental limitations severely restrict the class of systems for which the connection between Koopman operators and DMD can be established via lifting to infinite dimensions. In particular, this article focuses on the following.

\noindent \textbf{Existence of Koopman operators in continuous time systems:} Consider the continuous time dynamical system given as $\dot x = 1 + x^2$. The discretization with time step $1$ yields the following discrete dynamics: $x_{i+1} = F(x_i) := \tan(1+\arctan(x_i)).$ It should be immediately apparent that $F$ is not well defined over $\mathbb{R}$. In fact, through the consideration of $x_i = \tan(\pi/2 - 1)$ it can be seen that $F(x_i)$ is undefined. Since the symbol for a Koopman operator must be defined over the entire domain, there is no well defined Koopman operator arising from this discretization. Hence, the resultant Koopman operator cannot be expected to be well-defined. Note that the example above is not anecdotal. In addition to commonly used examples in classical works, such as \cite{khalil2002nonlinear}, mass-action kinetics in thermodynamics \cite[Section 6.3]{haddad2019dynamical}, chemical reactions \cite[Section 8.4]{toth2018reaction}, and species populations \cite[Section 4.2]{hallam2012mathematical} often give rise to such models. In general, unless the solutions of the continuous time dynamics are constrained to be forward complete, (for example, by assuming that the dynamical systems are globally Lipschitz \cite[Chapter 1]{coddington1955theory}) the resultant Koopman operator cannot be expected to be well-defined. This observation is validated by \cite{bittracher2015pseudogenerators}, but otherwise conditions on the dynamics are largely absent from the literature. 

\noindent \textbf{Boundedness of Koopman operators:} Even in the case of globally Lipschitz models, results regarding convergence of the DMD operator to the Koopman operator rely on the assumption that the Koopman operator is bounded over a specified RKHS (cf. \cite{korda2018convergence}). Boundedness of composition operators, like the Koopman operator, has been an active area of study in the operator theory community. Indeed, it turns out there are very few bounded composition operators over many function spaces. A canonical example is in the study of the Bargmann-Fock space, where only affine symbols yield bounded composition operators and of those the compact operators arise from $F(z) = a z + b$ where $|a| < 1.$ This is a very small collection, and reveals how unlikely it is to have a bounded Koopman operator arising from the discretization of continuous time nonlinear systems. 

\noindent \textbf{Practical utility of convergence results:} In the DMD literature, convergence of the DMD operator to the Koopman operator is typically established in the strong operator topology (SOT). However, as noted in \cite{korda2018convergence}, since SOT convergence is the topology of pointwise convergence \cite{pedersen2012analysis}, it is not sufficient to justify use of the DMD operator to interpolate or extrapolate the system behavior from a collection of samples. Furthermore, by selecting a complete set of observables and adding them one at a time to a finite rank representation of the Koopman operator, pointwise convergence is to be expected. This is a restatement of the more general result that all bounded operators may be approximated by finite rank operators in SOT, which itself is a specialization of a much broader result for topological vector spaces (cf. \cite[pg. 172]{pedersen2012analysis}). While \cite{korda2018convergence} also provides theoretically interesting insights into convergence of the eigenvalues and the eigenvectors of the DMD operator to eigenvalues and eigenfunctions of the Koopman operator along a subsequence, without the means to identify the convergent subsequences, practical utility of subsequential convergence is limited. In contrast, norm convergence is uniform convergence for operators, and yields a bound on the error over the kernels corresponding to the entire data set. Thus, a meaningful convergence result would arise from the norm convergence of finite rank representations to Koopman operators. However, this result is only possible for \textit{compact} Koopman operators, which are virtually nonexistent in applications of interest.

The above discussion brings into question the impact of various approaches to the study of continuous time dynamical systems through discretization and Koopman operators, which all rely on the compactness, boundedness, or existence of Koopman operators. These limitations necessitate a new direction in the study of operators that avoid the limiting processes that lean on the assumption of bounded or even \textit{extant} Koopman operators. In this paper, we develop a new DMD technique that directly utilizes Liouville operators (which include Koopman generators).

{\color{blue}A subset of Liouville operators that require the assumption of forward completeness on the dynamical system, called Koopman generators, have been studied as limits of Koopman operators in works such as \cite{cvitanovic2005chaos,das2020koopman,froyland2014detecting,giannakis2019data,giannakis2020extraction,giannakis2018koopman}.} The present work sidesteps the limiting process, and as a result, the assumptions regarding existence of Koopman operators, through the use of ``occupation kernels''. Specifically, occupation kernels remove the burden of approximation from that of operators and places it on the estimation of occupation kernels from time-series data, which requires much less theoretical overhead. The action of the adjoint of a Liouville operator on an occupation kernel provides the input-output relationships that enable DMD of time series data. Consequently, Liouville operators may be directly examined via occupation kernels, while avoiding limiting relations involving Koopman operators that might not be well defined for a particular discretization of a continuous time nonlinear dynamical system.

The direct involvement of Liouville operators in a DMD routine allows for the study of dynamics that are locally rather than globally Lipschitz, since Liouville operators do not impose an \textit{a priori} discretization. For the adjoint of a Liouville operator to be well defined, the operator must be densely defined over the underlying RKHS \cite{rosenfeld2019occupation2,rosenfeld2019occupation}. As a result, the exact class of dynamical systems that may be studied using Liouville operators depends on the selection of the RKHS. However, the requirement that the Liouville operator must be densely defined is not overly restrictive. For example, on the real valued Bargmann-Fock space, Liouville operators are densely defined for a wide range of dynamics that are expressible as real entire functions (which includes polynomials, exponentials, sine, and cosine, etc.).

Perhaps the strongest case for Liouville operators is the fact that they can be ``scaled'' to generate compact operators. Section \ref{sec:compact} this paper introduces the idea of scaled Liouville operators as variants of Liouville operators that are compact for a large class of dynamical systems over the Bargmann-Fock space. Scaled Liouville operators make slight adjustments to the data by scaling the trajectories by a single parameter $|a| < 1$. Through the selection of $a$ close to $1$, scaled Liouville operators yield compact operators that are numerically indistinguishable from the corresponding unbounded Liouville operators over a given compact workspace. More importantly, the DMD procedure performed on scaled Liouville operators yields a sequence of finite rank operators that \textit{converge in norm} to the scaled Liouville operators (see Theorem \ref{thm:DMD-norm-convergence}).

\noindent \textbf{Practical benefits of the developed method:} In addition to the theoretical benefits of Liouville operators detailed above, there are several practical benefits that arise from the use of occupation kernels and Liouville operators. Quadrature techniques, such as Simpson's rule, allow for the efficient estimation of occupation kernels while mitigating signal noise \cite{rosenfeld2019occupation2,rosenfeld2019occupation}, and also provide a robust estimation of the action of Liouville operators on occupation kernels. Furthermore, as snapshots are being integrated into trajectories for the generation of occupation kernels, the method presented in this manuscript can naturally incorporate irregularly sampled data. The results of this manuscript allow for the natural separation of DMD methods for discrete time systems using the Koopman operator and continuous time systems using the Liouville operator, where previously continuous time systems were artificially discretized to fit within the Koopman framework.

In DMD, a large finite dimensional representation of the linear operator is constructed from data (i.e., \textit{snapshots}) using a collection of \textit{observables}. A subsystem of relatively small rank is then determined via a singular value decomposition (SVD) and approximation of the linear operator by the small rank subsystem is supported by a direct mapping between the eigenfunctions of the former and the eigenvectors of the latter \cite{williams2015kernel}. The fact that the rank of the smaller subsystem is typically in agreement with the number of snapshots, which can be considerably smaller than the number of observables, makes DMD particularly useful when there is a small number of snapshots of a high dimensional system. However, direct application of DMD to high dimensional systems sampled at high frequencies still poses a significant computational challenge, where many snapshots may have to be discarded to produce a computationally tractable problem, as was done in \cite[Example 2.3]{kutz2016dynamic}. Such systems include mechanical systems with high sampling frequencies \cite{cichella2015cooperative,walters2018online}, and neurobiological systems recorded via electroencephalography (EEG) where the typical sampling frequencies are of the order of $500$ Hz \cite{gruss2019sympathetic}. In the development of DMD for continuous time systems the methods in the present manuscript also replace snapshots with trajectories or segments of trajectories of the system. The emphasis on trajectories over individual snapshot reduces the dimensionality of systems with an insurmountable number of snapshot without discarding any data.

The presented algorithm obviates the truncated SVD that is utilized throughout DMD methods. For example, in \cite{williams2015kernel} the truncated SVD is leveraged to convert from a feature space representation of the action of the Koopman operators to an approximation of the Gram matrix and an ``interaction matrix.'' This stands in opposition of the spirit of the ``kernel trick,'' where kernel functions are a means to avoid any direct interface with feature space. Following \cite{rosenfeld2021dynamic}, the presented algorithm is given purely with respect to the occupation kernels, and the resultant methods are considerably simpler than what is seen in \cite{williams2015kernel}.

\noindent \textbf{A comparison with similar literature:} Liouville operators are studied in the context of DMD procedures using limiting definitions in works such as \cite{klus2020data}. The manuscript \cite{klus2020data}, which was posted to arXiv around the same time as the first draft of this manuscript, approaches the Koopman generator through Galerkin methods. While the signs that the field is expanding beyond Koopman operators is encouraging, the authors of \cite{klus2020data} still adopt the limiting definitions of the Koopman generator in their work, which is an artifact from ergodic theory. Quantities similar to occupation kernels have been studied in the literature previously, such as occupation measures and time averaging functionals. These other quantities lack some of the essential characeristics that accompany occupation kernels. Occupation kernels and occupation measures both represent the same functional over different spaces. Occupation measures are in the dual space of the Banach space of continuous functions, while occupation kernels are functions in a RKHS. As such, functions in the RKHS may be estimated through projections on occupation kernels, and this fact is leveraged in Section \ref{sec:occkernel-dmd} where finite rank representations of the Liouville operators arise from the matrix representation of the projection operator. Occupation kernels are distinct from time average functionals, where the latter is the average of a sum of iterated applications of the Koopman operator to an observable. As per the discussion above, this limits the applicability of time average functionals to globally Lipschitz dynamics, and therefore their utilization is more constrained from that of occupation kernels, whose definition is independent of Koopman and Liouville operators.

The relevant preliminary concepts for the theoretical underpinnings of the approach taken in the present manuscript are reviewed in Section \ref{sec:rkhs}. This includes definitions and properties of RKHSs as well as densely defined operators and their adjoints.

\section{Technical Preliminaries}
\subsection{Reproducing Kernel Hilbert Spaces}
\label{sec:rkhs}

\begin{definition}
A reproducing kernel Hilbert space (RKHS) over a set $X$ is a Hilbert space of functions from $X$ to $\mathbb{R}$ such that for each $x \in X$, the evaluation functional $E_x g := g(x)$ is bounded.
\end{definition}

By the Reisz representation theorem, corresponding to each $x \in X$ there is a function $k_x \in H$ such that for all $g \in H$, $\langle g, k_x \rangle_H = g(x)$. The kernel function corresponding to $H$ is given as $K(x,y) = \langle k_y, k_x \rangle_H.$ The kernel function is a positive definite function in the sense that for any finite number of points $\{ c_1, c_2, \ldots, c_M \} \subset X$, the corresponding Gram matrix,
\[ \begin{pmatrix}
K(c_1,c_1) & \cdots & K(c_1,c_M)\\
\vdots & \ddots & \vdots\\
K(c_M,c_1) & \cdots & K(c_M,c_M)
\end{pmatrix}
\]
is positive semi-definite. The Gram matrix arises in many contexts in machine learning, such as in support vector machines (cf. \cite{hastie2005elements}), and particular to the subject matter of this manuscript, it plays a pivotal role in the construction of the kernel-based extended DMD method of \cite{williams2015kernel} and the occupation kernel approach presented herein.

The Aronszajn-Moore theorem states that there is a unique correspondence between RKHSs and positive definite kernel functions \cite{aronszajn1950theory}. That is the RKHS may be constructed directly from the kernel function itself or the kernel function may be determined by a RKHS through the Reisz theorem. When the RKHS is obtained from the kernel function, it is frequently referred to as the native space of that kernel function \cite{wendland2004scattered}.

RKHSs interact with function theoretic operators, such as Koopman (composition) operators \cite{jury2007c,luery2013composition,williams2015kernel}, multiplication operators \cite{rosenfeld2015densely,rosenfeld2015introducing}, and Toeplitz operators \cite{rosenfeld2016sarason}, in many nontrivial ways. For example, the kernel functions themselves play the role of eigenfunctions for the adjoints of multiplication operators \cite{szafraniec2000reproducing}, and when the function corresponding to a Koopman operator has a fixed point at $c \in X$, the kernel function centered at that point (i.e. $K(\cdot,c) \in H$) is an eigenfunction for the adjoint of the Koopman operator \cite{cowen1995composition}. The kernel functions can also be demonstrated to be in the domain of the adjoint of densely defined Koopman operators as will be demonstrated in Section \ref{sec:occkernels}.

For machine learning applications kernel functions are frequently used for dimensionality reduction by expressing the inner product of data cast into a high dimensional feature space as evaluation of the kernel function itself \cite{steinwart2008support,hastie2005elements}. Specifically, a feature map corresponding to a RKHS is given as the mapping $x \mapsto \Psi(x) := (\Psi_1(x),\Psi_2(x),\ldots)^T \in \ell^2(\mathbb{N})$ for $x \in X$ such that $K(x,y) = \langle \Psi(y), \Psi(x) \rangle_{\ell^2(\mathbb{N})}$.  That is, kernel function may be expressed as \[ K(x,y) = \sum_{m=1}^\infty \Psi_m(x) \Psi_m(y). \]
The feature space expression for a function $ g \in H $ is given as $ \mathbf g = (g_1, g_2, \ldots)^T \in \ell^2(\mathbb{N}) $ so that $ g(x) = \langle \mathbf g, \Psi(x) \rangle_{\ell^2(\mathbb{N})} = \langle g, K(\cdot,x) \rangle_H $. This representation of inner products of vectors in a features space as evaluation of a kernel function is central to the usage of kernel methods in data science, where the feature space is generally unknown but may be accessed through the kernel function. The approach taken in \cite{williams2015kernel} uses the feature space as the fundamental basis for their representation and obtains kernel functions through a truncated SVD, whereas the present work avoids the invocation of the feature space and the truncated SVD.

The most frequently employed RKHS in machine learning applications is that of the Gaussian radial basis function's native space. The Gaussian radial basis function is given as $K(x,y) = \exp\left( -\frac{1}{\mu} \| x-y\|_2^2 \right)$, and it is a positive definite function over $\mathbb{R}^n$ for all $n$.

Another important kernel is the exponential kernel dot product kernel, $K(x,y) = \exp\left( \frac{1}{\mu} x^Ty \right)$, which is also a positive definite function over $\mathbb{R}^n$. What is significant concerning the exponential dot product kernel is that its native space is the Bargmann-Fock space, where bounded Koopman operators have been completely classified. Another significant feature which will be leveraged is that polynomials are dense inside the Bargmann-Fock space with respect to the Hilbert space norm.

\subsection{Adjoints of Densely Defined Liouville Operators}
\label{sec:occkernels}

In the study of operator theory, the theory concerning bounded operators is most complete (cf. \cite{pedersen2012analysis,folland2013real}). A bounded operator over a Hilbert space is a linear operator $W: H \to H$ such that $\| Wg \|_H \le C \| g \|_H$ for some $C > 0$. The minimum $C$ that holds for all $g \in H$ is the norm of $W$ and written as $\| W \|$. A classical theorem in operator theory states that the collection of bounded operators is precisely the collection of continuous operators over a Hilbert space (or more generally a Banach space) \cite[Chapter 5]{folland2013real}.

Unbounded operators over a Hilbert space are linear operators given as $W: \mathcal{D}(W) \to H$, where $\mathcal{D}(W)$ is the domain contained within $H$ on which the operator $W$ is defined \cite[Chapter 5]{pedersen2012analysis}. When the domain of $W$ is dense in $H$, $W$ is said to be a densely defined operator over $H$. While unbounded operators are by definition discontinuous, closed operators over a Hilbert space satisfy weaker limiting relations. That is, an operator is closed if $\{ g_m \}_{m=1}^\infty \in \mathcal{D}(W)$, and both $\{ g_m \}_{m=1}^\infty$ and $\{ Wg_m \}_{m=1}^\infty$ are convergent sequences where $g_m \to g \in H$ and $Wg_m \to h \in H$, then $g \in \mathcal{D}(W)$ and $Wg = h$ \cite[Chapter 5]{pedersen2012analysis}. The Closed Graph Theorem states that if $W$ is a closed operator such that $\mathcal{D}(W) = H$, then $W$ is bounded.

\begin{lemma}\label{lem:closedoperators}A Liouville Operator with symbol $f$, defined as $ A_f g = \nabla g \cdot f $, that has the canonical domain \[\mathcal{D}(A_f) := \{ g \in H : \nabla g \cdot f \in H \}, \] is closed over RKHSs that are composed of continuously differentiable functions.\end{lemma}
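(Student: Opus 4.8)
The plan is to verify the defining property of a closed operator directly: given a sequence $\{g_m\} \subset \mathcal{D}(A_f)$ with $g_m \to g$ and $A_f g_m = \nabla g_m \cdot f \to h$ in $H$, I will show that $g \in \mathcal{D}(A_f)$ and $A_f g = h$. The governing idea is that in an RKHS norm convergence forces pointwise convergence, so both $g_m \to g$ and $\nabla g_m \cdot f \to h$ already hold pointwise; if in addition I can establish pointwise convergence of the partial derivatives $\partial_i g_m(x) \to \partial_i g(x)$ at every point, then $\nabla g_m(x)\cdot f(x) \to \nabla g(x)\cdot f(x)$ everywhere, and matching pointwise limits gives $\nabla g \cdot f = h$ as functions, whence $\nabla g \cdot f = h \in H$ and the two required conclusions follow at once.

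The one nontrivial ingredient — and the step I expect to be the main obstacle — is showing that on an RKHS consisting of continuously differentiable functions each partial-derivative evaluation functional $g \mapsto \partial_i g(x)$ is bounded, so that $g_m \to g$ in $H$ implies $\partial_i g_m(x)\to \partial_i g(x)$. I would obtain this from the uniform boundedness principle. Fix a point $x$ in the (open) domain and a coordinate direction $e_i$; for all sufficiently large $m$ the difference-quotient functional $L_{1/m}g := m\bigl(g(x+e_i/m)-g(x)\bigr) = \langle g,\, m(k_{x+e_i/m}-k_x)\rangle_H$ is bounded on $H$, with $\|L_{1/m}\| \le m\bigl(\|k_{x+e_i/m}\|_H+\|k_x\|_H\bigr)$. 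Because every $g \in H$ is continuously differentiable, $L_{1/m}g \to \partial_i g(x)$, so $\{L_{1/m}g\}_m$ is bounded for each $g$; Banach--Steinhaus then yields $\sup_m \|L_{1/m}\| < \infty$, and passing to the limit gives $|\partial_i g(x)| \le \bigl(\sup_m\|L_{1/m}\|\bigr)\|g\|_H$, i.e.\ boundedness of the derivative functional. (Note that this property is essentially equivalent to closedness of $A_f$ when $f\equiv e_i$, so it cannot be bypassed; the argument above does not presuppose the conclusion.)

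With that lemma in hand the remainder is routine. From $g_m\to g$ in $H$ I get, via the reproducing property and the bounded derivative functionals, $g_m(x)\to g(x)$ and $\partial_i g_m(x)\to \partial_i g(x)$ for every $x$ and every $i$, hence $(\nabla g_m \cdot f)(x) = \sum_i \partial_i g_m(x)\, f_i(x) \to \sum_i \partial_i g(x)\, f_i(x) = (\nabla g\cdot f)(x)$ pointwise — this requires no regularity of $f$ beyond its being a well-defined vector field on $X$. On the other hand, $A_f g_m \to h$ in $H$ forces $(\nabla g_m\cdot f)(x)\to h(x)$ pointwise. Uniqueness of pointwise limits gives $\nabla g\cdot f = h$ on all of $X$; since $h\in H$, this shows $\nabla g\cdot f\in H$, i.e.\ $g\in\mathcal{D}(A_f)$, and $A_f g = \nabla g\cdot f = h$. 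Therefore $A_f$ is closed.
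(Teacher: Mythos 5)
Your proof is correct. Note, however, that the paper itself does not prove this lemma at all --- it simply cites \cite{rosenfeld2019occupation}, where the closedness of Liouville operators with this canonical domain is established --- so your argument should be compared with that reference rather than with anything in the present manuscript. Your route is the standard one and it is self-contained: the entire content of the lemma is the boundedness of the functionals $g \mapsto \partial_i g(x)$ on an RKHS all of whose members are continuously differentiable, and your Banach--Steinhaus argument on the difference-quotient functionals $L_{1/m} = \langle \cdot,\, m(k_{x+e_i/m}-k_x)\rangle_H$ delivers exactly that, using only pointwise boundedness of $\{L_{1/m}g\}_m$ (which follows from differentiability of each $g$ at $x$) and no smoothness hypothesis on the kernel $K$ itself. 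Once the derivative evaluations are bounded, the closedness argument is the routine matching of pointwise limits that you give: $g_m \to g$ in $H$ forces $\partial_i g_m(x) \to \partial_i g(x)$ and hence $(\nabla g_m \cdot f)(x) \to (\nabla g \cdot f)(x)$ with no regularity needed on $f$, while $A_f g_m \to h$ in $H$ forces $(\nabla g_m \cdot f)(x) \to h(x)$, so $\nabla g \cdot f = h \in H$, i.e.\ $g \in \mathcal{D}(A_f)$ and $A_f g = h$. Two minor points worth making explicit if you write this up: the uniform bound $\sup_m \|L_{1/m}\|$ depends on the point $x$ (which is all you need, since only pointwise convergence is used), and the difference quotients require $x + e_i/m$ to lie in the domain for large $m$, which is automatic over $\mathbb{R}^n$ (the setting of the paper) or any open domain. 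Your parenthetical remark about non-circularity is apt: the boundedness of the derivative functionals is the nontrivial kernel of the closedness claim, and you derive it from the hypotheses rather than assuming it.
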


\begin{proof}
Liouville operators were demonstrated to be closed in \cite{rosenfeld2019occupation}.\qed
\end{proof}

The closedness of Koopman operators is well known in the study of RKHS, where they are more commonly known as composition operators (cf. \cite{jury2007c,luery2013composition}). Beyond the limit relations provided by closed operators, the closedness of an unbounded operator plays a signficant role in the study of the adjoints of unbounded operators \cite[Chapter 5]{pedersen2012analysis}.

\begin{definition}
For an operator $W$ let \[\mathcal D(W^*) := \{ h \in H : g \mapsto \langle Wg, h \rangle_H \text{ is bounded } \}\] be dense in $H$. 
For each $h \in \mathcal{D}(W^*)$ the Reisz theorem guarantees a function $W^*h \in H$ such that $\langle Wg, h \rangle_H = \langle g, W^*h \rangle_H$. The adjoint of the operator $W$ is thus given as $W^* : \mathcal{D}(W^*) \to H$ via the assignment $h \mapsto W^*h$.
\end{definition}

For a closed operator over a Hilbert space, the adjoint is densely defined \cite{pedersen2012analysis}. Hence, Liouville operators with domains given as in Lemma \ref{lem:closedoperators}, their adjoints are densely defined. Specific members of the domain of the respective adjoints may be identified, and these functions will be utilized in the characterization of the DMD methods in the subsequent sections. To characterize the interaction between the trajectories of a dynamical system and the Liouville operator, the notion of occupation kernels must be introduced (cf. \cite{rosenfeld2019occupation}).

\begin{definition}
Let $X$ be a metric space, $\gamma:[0,T] \to X$ be an essentially bounded measurable trajectory, and let $H$ be a RKHS over $X$ consisting of continuous functions. Then the functional $g \mapsto \int_0^T g(\gamma(t)) dt$ is bounded, and the Reisz theorem guarantees a function $\Gamma_{\gamma} \in H$ such that \[ \langle g, \Gamma_\gamma \rangle_H = \int_0^T g(\gamma(t)) dt \] for all $g \in H$. The function $\Gamma_{\gamma}$ is the \textit{occupation kernel} corresponding to $\gamma$ in $H$.
\end{definition}

\begin{lemma}\label{lem:adjointAction}
Let $f: \mathbb{R}^n \to \mathbb{R}^n$ be the dynamics for a dynamical system, and suppose that $\gamma : [0,T] \to \mathbb{R}^n$ is a trajectory satisfying $\dot \gamma = f(\gamma(t))$ in the Caretheodory sense. In this setting, $\Gamma_\gamma \in \mathcal{D}(A_f^*)$. Moreover, $A_f^* \Gamma_{\gamma} = K(\cdot, \gamma(T)) - K(\cdot,\gamma(0)).$
\end{lemma}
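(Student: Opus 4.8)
The plan is to verify both assertions directly from the definition of the adjoint: one shows that the functional $g \mapsto \langle A_f g, \Gamma_\gamma\rangle_H$ is bounded on $\mathcal{D}(A_f)$ and simultaneously identifies its Riesz representer as $K(\cdot,\gamma(T)) - K(\cdot,\gamma(0))$. Fix an arbitrary $g \in \mathcal{D}(A_f)$. Since $g$ lies in the canonical domain of $A_f$, the function $A_f g = \nabla g \cdot f$ is itself a member of $H$, so the defining property of the occupation kernel $\Gamma_\gamma$ applies to it and yields
\[
\langle A_f g, \Gamma_\gamma \rangle_H = \int_0^T \bigl(\nabla g \cdot f\bigr)(\gamma(t))\, dt = \int_0^T \nabla g(\gamma(t)) \cdot f(\gamma(t))\, dt .
\]

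Next I would invoke that $\gamma$ is a Carathéodory solution: $\gamma$ is absolutely continuous on $[0,T]$ and $\dot\gamma(t) = f(\gamma(t))$ for almost every $t$. The image $\gamma([0,T])$ is compact, and $g$ is continuously differentiable, hence Lipschitz on a neighborhood of this compact set; therefore $t \mapsto g(\gamma(t))$ is absolutely continuous on $[0,T]$, and the chain rule gives $\frac{d}{dt} g(\gamma(t)) = \nabla g(\gamma(t)) \cdot \dot\gamma(t) = \nabla g(\gamma(t)) \cdot f(\gamma(t))$ for a.e.\ $t$. Applying the fundamental theorem of calculus for absolutely continuous functions then collapses the integral above to $g(\gamma(T)) - g(\gamma(0))$. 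Finally, the reproducing property rewrites this as $\langle g, K(\cdot,\gamma(T))\rangle_H - \langle g, K(\cdot,\gamma(0))\rangle_H = \langle g, K(\cdot,\gamma(T)) - K(\cdot,\gamma(0))\rangle_H$.

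Combining these steps, $\langle A_f g, \Gamma_\gamma \rangle_H = \langle g, K(\cdot,\gamma(T)) - K(\cdot,\gamma(0))\rangle_H$ for every $g \in \mathcal{D}(A_f)$. By Cauchy--Schwarz this functional is bounded by $\|K(\cdot,\gamma(T)) - K(\cdot,\gamma(0))\|_H\,\|g\|_H$, so $\Gamma_\gamma \in \mathcal{D}(A_f^*)$; and since $K(\cdot,\gamma(T)) - K(\cdot,\gamma(0)) \in H$ is the (unique) Riesz representer of that functional, the definition of the adjoint forces $A_f^*\Gamma_\gamma = K(\cdot,\gamma(T)) - K(\cdot,\gamma(0))$, which is the claim.

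The routine parts are the occupation-kernel identity and the reproducing property; the step I expect to need the most care is the interchange hidden in the middle paragraph, namely justifying that the pointwise-a.e.\ chain rule and the fundamental theorem of calculus are valid when $\gamma$ is merely absolutely continuous and $f \circ \gamma$ is only essentially bounded and measurable. The key observation that makes this go through is that $g$ is $C^1$ and $\gamma([0,T])$ is compact, so $g$ is Lipschitz there and $g\circ\gamma$ inherits absolute continuity; one should also note that $\dot\gamma \in L^\infty$ together with local Lipschitzness of $g$ makes $t \mapsto \nabla g(\gamma(t))\cdot\dot\gamma(t)$ integrable, so all the integrals written above are finite and the manipulations are legitimate.
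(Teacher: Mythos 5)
Your argument is correct, and it is the natural direct verification: the paper itself gives no proof of this lemma, deferring instead to the original occupation-kernel reference \cite{rosenfeld2019occupation}, where essentially this same computation (occupation-kernel identity, chain rule and fundamental theorem of calculus along the Carath\'eodory solution, then the reproducing property) is carried out. Two small points worth flagging: identifying $A_f^*\Gamma_\gamma$ uniquely from the relation $\langle A_f g, \Gamma_\gamma\rangle_H = \langle g, K(\cdot,\gamma(T))-K(\cdot,\gamma(0))\rangle_H$ requires $\mathcal{D}(A_f)$ to be dense in $H$, an assumption the lemma statement leaves implicit but which the paper's framework (densely defined Liouville operators over an RKHS of continuously differentiable functions) supplies, so you should state it; and your remark that $\dot\gamma \in L^\infty$ is neither guaranteed by the Carath\'eodory hypothesis nor needed --- $\dot\gamma \in L^1$ together with boundedness of $\nabla g$ on the compact image $\gamma([0,T])$ already makes the integrand integrable, and in fact $(A_f g)\circ\gamma$ is continuous on $[0,T]$, so the integral is finite outright.
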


\begin{proof}
This lemma was established in \cite{rosenfeld2019occupation}.\qed
\end{proof}




For Liouville operators, several examples can be demonstrated where particular symbols produce densely defined operators over the Bargmann-Fock space. In particular, since polynomials are dense in the Bargmann-Fock space, for polynomial dynamics, $f$, the function $A_f g = \nabla g \cdot f$ is a polynomial whenever $g$ is a polynomial. Hence, polynomial dynamical systems correspond to densely defined Liouville operators over the Bargmann-Fock space, and it should be noted that this is not a complete characterization of the densely defined Liouville operators over this space. Moreover, for other RKHSs, different classes of dynamics will correspond to densely defined operators, and this requires independent evaluation for each RKHS.

\section{A Compact Variation of the Liouville Operator}
\label{sec:compact}

One of the drawbacks of employing either the Koopman operator or the Liouville operator for DMD is that the finite rank matrices produced by the method are strictly heuristic representations of the modally unbounded operators. An important question to address is whether a DMD procedure may be produced using a compact operator other than those densely defined operators discussed so far. This section presents a class of compact operators for use in DMD applied to continuous time systems. The compactness and boundedness of the operators will depend on the selection of the RKHS and the dynamics of the system. The Bargmann-Fock space will be utilized in this section, and the compactness assumption will be demonstrated to hold for a large class of dynamics.

\begin{definition}
Let $H$ be a RKHS over $\mathbb{R}^n$, $a \in \mathbb{R}$ with $|a| < 1$, and let the scaled Liouville operator with symbol $f:\mathbb{R}^n \to \mathbb{R}^n$, \[ A_{f,a} : \mathcal{D}(A_{f,a}) \to H, \] be given as $A_{f,a} g(x) = a\nabla g(ax)f(x)$ for all $x \in \mathbb{R}^n$ and
\[g \in \mathcal{D}(A_{f,a}) = \{ h \in H : a\nabla h(ax) f(x) \in H\}.\]
\end{definition}

From the definition of scaled Liouville operators, if $\gamma : [0,T] \to \mathbb{R}^n$ is a trajectory satisfying $\dot \gamma = f(\gamma)$, then \[ \int_0^T A_{f,a} g(\gamma(t)) dt = \int_0^T a \nabla g(a\gamma(t)) f(\gamma(t)) dt = \langle A_{f,a} g, \Gamma_\gamma \rangle_{H}.\] 

\begin{proposition}\label{prop:scaled-adjoint}
For $\gamma : [0,T] \to \mathbb{R}^n$, such that $\dot \gamma = f(\gamma)$, $\Gamma_{\gamma} \in \mathcal{D}(A_{f,a}^*)$ and \[ A_{f,a}^* \Gamma_{\gamma} = K(\cdot,a\gamma(T)) - K(\cdot,a\gamma(0)).\]
\end{proposition}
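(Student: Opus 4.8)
The plan is to mirror the argument behind Lemma \ref{lem:adjointAction}, carrying the extra factor $a$ through the chain rule. First I would fix $g \in \mathcal{D}(A_{f,a})$ and use the definition of the occupation kernel to write $\langle A_{f,a} g, \Gamma_\gamma \rangle_H = \int_0^T a\nabla g(a\gamma(t)) f(\gamma(t))\,dt$, which is precisely the identity recorded immediately before the statement. The crucial observation is that, since $\gamma$ satisfies $\dot\gamma = f(\gamma)$ in the Carath\'eodory sense, the chain rule yields $\frac{d}{dt} g(a\gamma(t)) = \nabla g(a\gamma(t)) \cdot a\dot\gamma(t) = a\nabla g(a\gamma(t)) \cdot f(\gamma(t))$ for almost every $t$, so the integrand is a total derivative.

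Next I would apply the fundamental theorem of calculus to obtain $\int_0^T a\nabla g(a\gamma(t)) f(\gamma(t))\,dt = g(a\gamma(T)) - g(a\gamma(0))$, and then invoke the reproducing property of each evaluation to rewrite this as $\langle g, K(\cdot,a\gamma(T)) - K(\cdot,a\gamma(0)) \rangle_H$. Since the right-hand side is the inner product of $g$ against the fixed element $K(\cdot,a\gamma(T)) - K(\cdot,a\gamma(0)) \in H$, the functional $g \mapsto \langle A_{f,a} g, \Gamma_\gamma \rangle_H$ is bounded, so $\Gamma_\gamma \in \mathcal{D}(A_{f,a}^*)$, and by the defining property of the adjoint $A_{f,a}^* \Gamma_\gamma = K(\cdot,a\gamma(T)) - K(\cdot,a\gamma(0))$, which is the claim.

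The delicate point is justifying the chain-rule and fundamental-theorem step in the Carath\'eodory setting, where $\gamma$ is only absolutely continuous with $\dot\gamma$ defined almost everywhere and $f$ need not be continuous along the curve. The way around this is that the functions in the RKHS under consideration are continuously differentiable, so $t \mapsto g(a\gamma(t))$ is the composition of a $C^1$ map with an absolutely continuous curve, hence absolutely continuous on $[0,T]$ and equal to the integral of its almost-everywhere derivative. Integrability of that derivative is immediate because $g \in \mathcal{D}(A_{f,a})$ forces $A_{f,a} g \in H$, a space of continuous functions, while $\gamma$ is essentially bounded and measurable, so $t \mapsto A_{f,a} g(\gamma(t))$ is bounded and measurable on $[0,T]$. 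I expect this regularity bookkeeping, rather than any genuinely new idea, to be the main obstacle; the remainder is a direct transcription of the unscaled case of Lemma \ref{lem:adjointAction}.
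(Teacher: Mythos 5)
Your argument is correct and is essentially the proof the paper intends: the paper states the key identity $\langle A_{f,a}g,\Gamma_\gamma\rangle_H=\int_0^T a\nabla g(a\gamma(t))f(\gamma(t))\,dt$ immediately before the proposition and treats the rest as the scaled analogue of Lemma~\ref{lem:adjointAction}, which is exactly your chain-rule, fundamental-theorem, and reproducing-property computation yielding boundedness of $g\mapsto\langle A_{f,a}g,\Gamma_\gamma\rangle_H$ and the identification $A_{f,a}^*\Gamma_\gamma=K(\cdot,a\gamma(T))-K(\cdot,a\gamma(0))$. Your regularity bookkeeping for the Carath\'eodory setting (absolute continuity of $t\mapsto g(a\gamma(t))$ and boundedness of the integrand) is sound and, if anything, more explicit than the paper, which omits the proof entirely.
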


Theorem \ref{thm:scaled-compact} and Corollary \ref{cor:polynomials-compact} demonstrate that for the Bargmann-Fock space, a large class of dynamics correspond to compact scaled Liouville operators.

\begin{theorem}\label{thm:scaled-compact}
Let $F^2(\mathbb{R}^n)$ be the Bargmann-Fock space of real valued functions, which is the native space for the exponential dot product kernel, $K(x,y) = \exp(x^Ty)$, $a \in \mathbb{R}$ with $|a| < 1$, and let $A_{f,a}$ be the scaled Liouville operator with symbol $f:\mathbb{R}^n \to \mathbb{R}^n$. There exists a collection of coefficients $\{ C_\alpha \}_{\alpha}$ indexed by the multi-index $\alpha$ such that if $f$ is representable by a multi-variate power series, $f(x) = \sum_{\alpha} f_\alpha x^\alpha$ satisfying \[ \sum_{\alpha} |f_{\alpha}| C_\alpha < \infty, \] then $A_{f,a}$ is bounded and compact over $F^2(\mathbb{R}^n)$.
\end{theorem}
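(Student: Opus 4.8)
The plan is to exploit three structural features of the Bargmann--Fock space $F^2(\mathbb{R}^n)$: the normalized monomials $\phi_\beta(x) := x^\beta/\sqrt{\beta!}$ form an orthonormal basis; norm convergence in $F^2(\mathbb{R}^n)$ implies locally uniform convergence of functions \emph{together with all partial derivatives} (because $\partial_{x_j}K(\cdot,x) = y_j\exp(y^T x)$ lies in $F^2(\mathbb{R}^n)$ with norm locally bounded in $x$, so $\partial_j g(x) = \langle g,\partial_{x_j}K(\cdot,x)\rangle_{F^2}$ is a continuous linear functional of $g$ locally uniformly in $x$); and polynomials are dense. Since $A_{f,a}$ is linear in the symbol $f$, writing $f = \sum_\alpha f_\alpha x^\alpha$ with $f_\alpha\in\mathbb{R}^n$ and components $f_\alpha^{(j)}$, I would decompose $A_{f,a} = \sum_{j=1}^n\sum_\alpha f_\alpha^{(j)} M^{(j)}_\alpha$, where $M^{(j)}_\alpha g(x) := a\,x^\alpha\,\partial_j g(ax)$ is the elementary scaled Liouville operator with monomial symbol $e_j x^\alpha$ (here $e_j$ denotes the $j$-th standard unit vector). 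The first step is to prove each $M^{(j)}_\alpha$ is Hilbert--Schmidt, with an explicit, symbol-independent bound on its Hilbert--Schmidt norm.

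A direct computation on the basis gives $M^{(j)}_\alpha \phi_\beta = \beta_j\,a^{|\beta|}\sqrt{(\alpha+\beta-e_j)!/\beta!}\;\phi_{\alpha+\beta-e_j}$ when $\beta_j\ge 1$ and $0$ otherwise, hence
\[
\|M^{(j)}_\alpha\|_{\mathrm{HS}}^2 \;=\; \sum_{\beta:\ \beta_j\ge 1} \beta_j^2\,a^{2|\beta|}\,\frac{(\alpha+\beta-e_j)!}{\beta!}.
\]
The technical heart is a combinatorial bound on the factorial ratio: componentwise, $(\alpha_i+\beta_i)!/\beta_i! = \prod_{l=1}^{\alpha_i}(\beta_i+l)\le(|\beta|+|\alpha|)^{\alpha_i}$ for $i\ne j$, and $(\alpha_j+\beta_j-1)!/\beta_j!\le(|\beta|+|\alpha|)^{\alpha_j}$ for $\beta_j\ge 1$, so $(\alpha+\beta-e_j)!/\beta!\le(|\beta|+|\alpha|)^{|\alpha|}$. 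Using $\beta_j^2\le(|\beta|+|\alpha|)^2$ and the fact that there are at most $(m+1)^{n-1}$ multi-indices of order $m$, the sum is dominated by $\sum_{m\ge 0}(m+1)^{n-1}(m+|\alpha|)^{|\alpha|+2}\,|a|^{2m} =: D_\alpha <\infty$, where finiteness comes from $|a|<1$ forcing geometric decay to beat the polynomial factor. Setting $C_\alpha := n\sqrt{D_\alpha}$ (and fixing any norm $|\cdot|$ on $\mathbb{R}^n$) yields $\sum_{j=1}^n\|M^{(j)}_\alpha\|_{\mathrm{HS}}\le C_\alpha$, with $C_\alpha$ depending only on $n$, $a$, and $|\alpha|$, as required by the statement.

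With these coefficients the hypothesis $\sum_\alpha |f_\alpha|\,C_\alpha<\infty$ gives $\sum_{j,\alpha}|f^{(j)}_\alpha|\,\|M^{(j)}_\alpha\|_{\mathrm{HS}}<\infty$, so the series $B := \sum_{j,\alpha} f^{(j)}_\alpha M^{(j)}_\alpha$ converges absolutely in Hilbert--Schmidt norm; hence $B$ is Hilbert--Schmidt, and in particular bounded and compact. It remains to identify $B$ with $A_{f,a}$. On a basis monomial, the $F^2$-convergent series $\sum_{j,\alpha}f^{(j)}_\alpha M^{(j)}_\alpha\phi_\beta$ equals, pointwise, $a\sum_j\partial_j\phi_\beta(ax)\sum_\alpha f^{(j)}_\alpha x^\alpha = a\nabla\phi_\beta(ax)f(x) = A_{f,a}\phi_\beta$, so $B$ and $A_{f,a}$ agree on all polynomials. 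For general $g\in F^2(\mathbb{R}^n)$, take polynomials $p_k\to g$ in norm: then $Bp_k\to Bg$ in $F^2(\mathbb{R}^n)$, while $Bp_k = A_{f,a}p_k = a\nabla p_k(ax)f(x)\to a\nabla g(ax)f(x)$ pointwise, using the propagation of norm convergence to gradients noted above and the fact that the power series for $f$ converges everywhere. Comparing the two pointwise limits gives $a\nabla g(ax)f(x) = Bg\in F^2(\mathbb{R}^n)$, so $g\in\mathcal D(A_{f,a})$ and $A_{f,a}g = Bg$; hence $\mathcal D(A_{f,a}) = F^2(\mathbb{R}^n)$ and $A_{f,a} = B$ is bounded and compact.

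The main obstacle I anticipate is the factorial/multi-index estimate in the second paragraph and the bookkeeping needed to extract a clean, symbol-independent $C_\alpha$; the Hilbert--Schmidt summation and the density/closedness identification are routine once that estimate is in place. A secondary point requiring care is the assertion that $F^2$-norm convergence implies locally uniform convergence of gradients, which should be recorded explicitly since it is what upgrades agreement on polynomials to the equality $A_{f,a}=B$ on the whole space.
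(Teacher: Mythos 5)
Your proposal is correct, but it follows a genuinely different route from the paper. The paper works in one dimension (deferring the multivariate case to ``bookkeeping''): it reduces monomial symbols to the linear one via the identity $A_{x^m,a}=A^m_{x,\sqrt[m]{a}}$, notes that $A_{x,a}$ acts diagonally on the normalized monomials with eigenvalues $m a^m$, and gets boundedness from the closed graph theorem and compactness from $A_{x,a}P_M\to A_{x,a}$ in operator norm, with $C_\alpha$ implicitly the norms of the monomial pieces and $A_{f,a}$ obtained as their norm-convergent sum. You instead treat each elementary operator $M^{(j)}_\alpha$ in $\mathbb{R}^n$ as a weighted shift on the monomial basis and bound its Hilbert--Schmidt norm explicitly, which buys you symbol-independent coefficients $C_\alpha$ depending only on $n$, $a$, $|\alpha|$, the stronger conclusion that $A_{f,a}$ is Hilbert--Schmidt, and the multivariate case in full. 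Notably, your direct computation also sidesteps the paper's reduction step, which is questionable as an operator identity: powers of the diagonal operator $A_{x,b}$ stay diagonal on monomials, while $A_{x^m,a}$ shifts degrees for $m\ge 2$, so your HS estimate for monomial symbols is the more reliable way to establish compactness of the building blocks. Your final identification of the summed operator with $A_{f,a}$ on all of $F^2(\mathbb{R}^n)$ (via density of polynomials and boundedness of the derivative-evaluation functionals) is likewise more careful than the paper's implicit treatment; just record that standard derivative-reproducing fact explicitly, as you propose. What the paper's route buys in exchange is brevity and a very transparent diagonal computation.
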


\begin{proof}
The proof has been relegated to the appendix to ease exposition.\qed
\end{proof}

\begin{corollary}\label{cor:polynomials-compact}
If $f$ is a multi-variate polynomial, then $A_{f,a}$ is bounded and compact over $F^2(\mathbb{R}^n)$ for all $|a|<1$.
\end{corollary}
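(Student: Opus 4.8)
The plan is to obtain this as an immediate consequence of Theorem~\ref{thm:scaled-compact}. A multi-variate polynomial $f$ is itself a multi-variate power series $f(x) = \sum_\alpha f_\alpha x^\alpha$ in which all but finitely many of the coefficients $f_\alpha$ vanish. Thus the only thing to check is that the summability hypothesis $\sum_\alpha |f_\alpha| C_\alpha < \infty$ of Theorem~\ref{thm:scaled-compact} is automatically satisfied, and then the conclusion (boundedness and compactness of $A_{f,a}$ over $F^2(\mathbb{R}^n)$) transfers verbatim.

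First I would recall from the proof of Theorem~\ref{thm:scaled-compact} that the coefficients $\{C_\alpha\}_\alpha$ form a fixed family of finite nonnegative reals, defined for every multi-index $\alpha$ and independent of the particular symbol $f$. Since $f$ is a polynomial, the index set $\{\alpha : f_\alpha \neq 0\}$ is finite, so $\sum_\alpha |f_\alpha| C_\alpha = \sum_{\alpha : f_\alpha \neq 0} |f_\alpha| C_\alpha$ is a finite sum of finite terms and hence finite. Invoking Theorem~\ref{thm:scaled-compact} then gives that $A_{f,a}$ is bounded and compact on $F^2(\mathbb{R}^n)$, and this holds for every $a$ with $|a| < 1$, since the hypothesis of the theorem places no restriction on $a$ beyond $|a| < 1$.

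The only point that needs care is bookkeeping: one must confirm that each individual $C_\alpha$ produced in the appendix proof is finite, i.e., that the norm bound on the monomial piece $g \mapsto a\,\nabla g(a\,\cdot)\, x^\alpha$ of the scaled Liouville operator is finite for each fixed $\alpha$. Granting that — which is precisely what the proof of Theorem~\ref{thm:scaled-compact} establishes on the way to its infinite-sum estimate — there is no genuine obstacle, and the corollary is a one-line specialization. I would also remark that this argument re-derives, through the scaling, the fact that $A_{f,a}$ is (everywhere, hence densely) defined for polynomial $f$, consistent with the earlier observation that polynomial symbols yield densely defined unscaled Liouville operators because polynomials are dense in the Bargmann-Fock space.
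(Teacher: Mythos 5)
Your proposal is correct and matches the paper's intent: the corollary is stated as an immediate specialization of Theorem~\ref{thm:scaled-compact}, since a polynomial has only finitely many nonzero coefficients $f_\alpha$ and each $C_\alpha$ (the operator norm of the corresponding monomial scaled Liouville operator, shown finite in the theorem's proof) makes $\sum_\alpha |f_\alpha| C_\alpha$ a finite sum. Your explicit check that the $C_\alpha$ are finite and independent of $f$ is exactly the bookkeeping the paper leaves implicit, so nothing further is needed.
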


The compactness of scaled Liouville operators (over the Bargmann-Fock space) position them as an answer for the desire of norm convergence for DMD type methods. For bounded Koopman operators, work such as \cite{korda2018convergence} obtain convergence in the strong operator topology (SOT) of the DMD method to the Koopman operator. SOT convergence is only pointwise convergence over a Hilbert space, and does not provide any generalization guarantees in the learning sense. Norm convergence on the other hand gives a uniform bound on the error estimates for all functions in the Hilbert space. Specifically, the norm convergence of the DMD procedure given in \ref{sec:occkernel-dmd} will be demonstrated to converge in norm to the scaled Liouville operator.

While scaled Liouville operators are not identical to the Liouville operator, the selection of the parameter $a$ close to $1$ limits the adjustments to the time series data through the operator to within machine precision, and hence the decomposition of scaled Liouville operators is computationally indistinguishable from that of the Liouville operator for $a$ sufficiently close to $1$.


\section{Occupation Kernel Dynamic Mode Decomposition}
\label{sec:occkernel-dmd}
\subsection{Finite rank representation of the Liouville operator}
With the relevant theoretical background presented, this section develops the Occupation Kernel-based DMD method for continuous time systems. This method differs from the kernel-based extended DMD method of \cite{williams2015kernel}, where the kernel functions for the inputs are now replaced by occupation kernels, and the output is now a difference of kernel functions. This formulation allows for the snapshots of typical DMD methods to be strung together as trajectories. The occupation kernel-based DMD method then allows for the incorporation of all the snapshots of a given system to be incorporated into the DMD analysis in a way that reduces the dimensionality of the resultant problem to be less than the number of snapshots, while simultaneously allowing for the direct treatment of continuous time dynamical systems. If the rank of the resulting matrices needs to be increased, the trajectories may be segmented up to the number of snapshots.

It should also be noted that this method differs from \cite{williams2015kernel} in that it avoids direct evaluations of the feature space. Thus, the succeeding method keeps with the spirit of the ``kernel trick,'' where the feature space is only accessed through evaluation of the kernel functions \cite[pg. 19]{steinwart2008support}.

Let $K$ be the kernel function for a RKHS, $H$, over $\mathbb{R}^n$ consisting of continuously differentiable functions. Let $\dot x = f(x)$ be a dynamical system corresponding to a densely defined Liouville operator, $A_f$, over $H$. Suppose that $\{ \gamma_i : [0,T_i] \to X \}_{i=1}^M$ is a collection of trajectories satisfying $\dot \gamma_i = f(\gamma_i)$. There is a corresponding collection of \textit{occupation kernels}, $\alpha := \{ \Gamma_{\gamma_i} \}_{i=1}^M \subset H$, given as $\Gamma_{\gamma_i}(x) := \int_0^{T_i} K(x,\gamma_i(t)) dt.$ For each $\gamma_i$ the action of $A_f^*$ on the corresponding occupation kernel is $A_f^* \Gamma_{\gamma_i} = K(\cdot, \gamma_i(T_{i})) - K(\cdot, \gamma_i(0))$.

Thus, when $\alpha$ is selected as an ordered basis for a vector space, the action of $A_f^*$ is known on $\vspan(\alpha)$. The objective of the DMD procedure is to express a matrix representation of the operator $A_f^*$ on the finite dimensional vector space spanned by $\alpha$ followed by projection onto $\vspan(\alpha)$.

Let $w_1,\cdots,w_M$ be the coefficients for the projection of a function $g \in H$ onto $\vspan(\alpha) \subset H$, written as $P_\alpha g = \sum_{i=1}^{M} w_i\Gamma_{\gamma_{i}}$. Using the fact that 
\[
    \langle g , \Gamma_{\gamma_j}\rangle_H = \langle P_\alpha g , \Gamma_{\gamma_j}\rangle_H = \begin{pmatrix} \langle \Gamma_{\gamma_1} , \Gamma_{\gamma_j}\rangle_H & \cdots & \langle \Gamma_{\gamma_M} , \Gamma_{\gamma_j}\rangle_H \end{pmatrix} \begin{pmatrix} w_1 \\ \vdots \\ w_M \end{pmatrix},
\]
for all $ j = 1, \cdots, M $, the coefficients $ w_1, \cdots, w_M $ may be obtained through the solution of the following linear system: 
\begin{equation}
    \begin{pmatrix}
        \langle \Gamma_{\gamma_1}, \Gamma_{\gamma_1} \rangle_H & \cdots & \langle \Gamma_{\gamma_M}, \Gamma_{\gamma_1} \rangle_H\\
        \vdots & \ddots & \vdots\\
        \langle \Gamma_{\gamma_1}, \Gamma_{\gamma_M} \rangle_H & \cdots & \langle \Gamma_{\gamma_M}, \Gamma_{\gamma_M} \rangle_H
    \end{pmatrix}
    \begin{pmatrix}
        w_1 \\ \vdots \\ w_M
    \end{pmatrix}
    =
    \begin{pmatrix}
        \langle g, \Gamma_{\gamma_1} \rangle_H\\
        \vdots\\
        \langle g, \Gamma_{\gamma_M} \rangle_H
    \end{pmatrix},\label{eq:projectionMatrix}
\end{equation}
where each of the inner products may be expressed as either single or double integrals as
\begin{gather}
    \langle \Gamma_{\gamma_j}, \Gamma_{\gamma_i} \rangle_H = \int_0^{T_i} \int_0^{T_j} K(\gamma_i(\tau),\gamma_j(t)) dt d\tau \text{, and }
    \langle g, \Gamma_{\gamma_i} \rangle_H = \int_0^{T_i} g(\gamma_i(t))dt.\label{eq:integralInnerProduct}
\end{gather}
Furthermore, if $h = \sum_{i=1}^{M} v_i\Gamma_{\gamma_{i}} \in \vspan(\alpha)$ for some coefficients $\{v_i\}_{i=1}^M\subset \mathbb{R}$, then $ A_f^* h \in H $, and it follows that
\begin{equation}\small
    \langle A_f^* h,\Gamma_{\gamma_j} \rangle = \left\langle \sum_{i=1}^{M} v_i A_f^*\Gamma_{\gamma_{i}},\Gamma_{\gamma_j} \right\rangle_H = \begin{pmatrix}\left\langle  A_f^*\Gamma_{\gamma_{1}},\Gamma_{\gamma_j} \right\rangle_H,\cdots,\left\langle  A_f^*\Gamma_{\gamma_{M}},\Gamma_{\gamma_j} \right\rangle_H\end{pmatrix}\begin{pmatrix}v_1\\\vdots\\ v_M\end{pmatrix}, \label{eq:occKernelonSpanAlpha}
\end{equation}
for all $j = 1,\cdots,M$. Using \eqref{eq:projectionMatrix} and \eqref{eq:occKernelonSpanAlpha}, the coefficients $ \{w_i\}_{i=1}^M $ in the projection of $ A_f^* h $ onto $ \vspan(\alpha) $ can be expressed as
\begin{gather*}
    \begin{pmatrix}
        w_1 \\ \vdots \\ w_M
    \end{pmatrix}=
    \begin{pmatrix}
        \langle \Gamma_{\gamma_1}, \Gamma_{\gamma_1} \rangle_H & \cdots & \langle \Gamma_{\gamma_M}, \Gamma_{\gamma_1} \rangle_H\\
        \vdots & \ddots & \vdots\\
        \langle \Gamma_{\gamma_1}, \Gamma_{\gamma_M} \rangle_H & \cdots & \langle \Gamma_{\gamma_M}, \Gamma_{\gamma_M} \rangle_H
    \end{pmatrix}^{-1}
    \times\begin{pmatrix}
        \langle A_f^* \Gamma_{\gamma_1}, \Gamma_{\gamma_1} \rangle_H & \cdots & \langle A_f^* \Gamma_{\gamma_M}, \Gamma_{\gamma_1} \rangle_H\\
        \vdots & \ddots & \vdots\\
        \langle A_f^* \Gamma_{\gamma_1}, \Gamma_{\gamma_M} \rangle_H & \cdots & \langle A_f^* \Gamma_{\gamma_M}, \Gamma_{\gamma_M} \rangle_H
    \end{pmatrix}
    \begin{pmatrix}
        v_1 \\ \vdots \\ v_M
    \end{pmatrix}.
\end{gather*}
Lemma \ref{lem:adjointAction} then yields the finite rank representation for $ P_\alpha A_f^* $, restricted to the occupation kernel basis, $ \vspan(\alpha) $, as $ [P_\alpha A_f^*]_{\alpha}^{\alpha} = G^{-1}\mathcal{I} $ where
\[
    G:=\begin{pmatrix}
        \langle \Gamma_{\gamma_1}, \Gamma_{\gamma_1} \rangle_H & \cdots & \langle \Gamma_{\gamma_1}, \Gamma_{\gamma_M} \rangle_H\\
        \vdots & \ddots & \vdots\\
        \langle \Gamma_{\gamma_M}, \Gamma_{\gamma_1} \rangle_H & \cdots & \langle \Gamma_{\gamma_M}, \Gamma_{\gamma_M} \rangle_H
    \end{pmatrix}
\]
is the Gram matrix of occupation kernels and
\[\small
    \mathcal{I}:=\begin{pmatrix}
        \langle K(\cdot,\gamma_1(T_1)) - K(\cdot, \gamma_1(0)), \Gamma_{\gamma_1} \rangle_H & \cdots & \langle K(\cdot,\gamma_M(T_M)) - K(\cdot, \gamma_M(0)), \Gamma_{\gamma_1} \rangle_H\\
        \vdots & \ddots & \vdots\\
        \langle K(\cdot,\gamma_1(T_1)) - K(\cdot, \gamma_1(0)), \Gamma_{\gamma_M} \rangle_H & \cdots & \langle K(\cdot,\gamma_M(T_M)) - K(\cdot, \gamma_M(0)), \Gamma_{\gamma_M} \rangle_H
    \end{pmatrix}.
\]
is the interaction matrix.
DMD requires a finite-rank representation of $ P_{\alpha}A_f $, instead of $ P_{\alpha}A_f^* $. Similar to the development above, Lemma \ref{lem:adjointAction} can be used to generate a finite rank representation of $ P_{\alpha}A_f $ under the following additional assumption.
\begin{assumption}
   The occupation kernels are in the domain of the Liouville operator, i.e., $\alpha \subset \mathcal{D}(A_f)$.\label{ass:kernels_in_domain_A_f}
\end{assumption}
Given $h = \sum_{i=1}^{M} v_i\Gamma_{\gamma_{i}} \in \vspan(\alpha)$ for some coefficients $ \{v_i\}_{i=1}^M \subset \mathbb{R} $,  Assumption \ref{ass:kernels_in_domain_A_f} implies that $A_f h\in H$ and
\begin{multline}
    \left\langle A_f h,\Gamma_{\gamma_j}\right\rangle_H = \sum_{i=1}^M v_i\left\langle A_f\Gamma_{\gamma_i},\Gamma_{\gamma_j}\right\rangle_H = \sum_{i=1}^M v_i\left\langle \Gamma_{\gamma_i}, A_f^* \Gamma_{\gamma_j}\right\rangle_H \\= \left(\left\langle \Gamma_{\gamma_1}, A_f^* \Gamma_{\gamma_j}\right\rangle_H,\ldots,\left\langle \Gamma_{\gamma_M}, A_f^* \Gamma_{\gamma_j}\right\rangle_H\right)\begin{pmatrix}
        v_1\\\vdots\\v_M
    \end{pmatrix}.
\end{multline}
Lemma 2 then yields a finite rank representation of $ P_\alpha A_f $, restricted to $\vspan(\alpha) $ as
\begin{equation}
    [P_\alpha A_f]_\alpha^\alpha = G^{-1}\mathcal{I}^T.
\end{equation}
\subsection{Dynamic mode decomposition}
Suppose that $\lambda_i$ is the eigenvalue corresponding to the eigenvector $v_i := (v_{i1}, v_{i2}, \ldots, v_{iM})^T$, $i=1,\ldots,M$, of $[P_\alpha A_f]_{\alpha}^\alpha$. The eigenvector $v_i$ can be used to construct a normalized eigenfunction of $ P_{\alpha} A_f $ restricted to $\vspan(\alpha)$, given as $\varphi_i = \frac{1}{N_i} \sum_{j=1}^M v_{ij} \Gamma_{\gamma_j}$, where $N_i := \sqrt{v_i^\dagger G v_i}$, and $(\cdot)^\dagger$ denotes the conjugate transpose. Let $V$ be the matrix of coefficients of the normalized eigenfunctions arranged so that each column corresponds to an eigenfunction.

The DMD procedure begins by expressing the identity function, also known as the full state observable, $g_{id}(x) := x \in \mathbb{R}^n$ as a combination of the approximate eigenfunctions of $A_f$ and \textit{Liouville modes} $\xi_i \in \mathbb{R}^n$ as $g_{id}(x) \approx \sum_{i=1}^M \xi_i \varphi_i(x)$. The $j$-th row of the matrix $\xi = (\xi_1 \cdots \xi_M)$ is obtained as
\[
    \begin{pmatrix}(\xi_1)_j & \cdots & (\xi_M)_j\end{pmatrix} = \left(\begin{pmatrix} \langle \varphi_1,\varphi_1 \rangle_H & \cdots & \langle \varphi_1,\varphi_M\rangle_H\\
    \vdots & \ddots & \vdots\\
    \langle \varphi_M,\varphi_1\rangle_H & \cdots & \langle \varphi_M,\varphi_M \rangle_H \end{pmatrix}^{-1}
    \begin{pmatrix} \langle (x)_j, \varphi_i \rangle_H \\ \vdots \\ \langle (x)_j, \varphi_i \rangle_H \end{pmatrix}\right)^T
\]
and $(x)_j$ is viewed here as the functional mapping $x \in \mathbb{R}^n$ to its $j$-th coordinate. By examining the inner products $ \left\langle g_{id},\Gamma_{\gamma_i}\right\rangle_H $, for $i=1,\ldots,M$, the matrix $\xi$ may be expressed as
\begin{equation}
    \xi = \left((V^T G V)^{-1} V^T \begin{pmatrix} \int_0^T \gamma_1(t)^T dt\\ \vdots\\ \int_0^T \gamma_M(t)^T dt \end{pmatrix}\right)^T. \label{eq:Liouville_modes}
\end{equation}

Given a trajectory $x$ satisfying $\dot x = f(x)$, each eigenfunction of $A_f$ satisfies $\dot \varphi_i(x(t)) = \lambda_i \varphi_i(x(t))$ and hence, $\varphi_i(x(t)) = \varphi_i(x(0)) e^{\lambda_i t}$, and the following data driven model is obtained:
\begin{equation}
    x(t) \approx \sum_{i=1}^M \xi_i \varphi_i(x(0)) e^{\lambda_i t}, \label{eq:data-driven_model}
\end{equation}
where
\begin{equation}
    \varphi_i(x(0)) = \frac{1}{N_i} \sum_{j=1}^M v_{ij} \Gamma_{\gamma_j}(x(0)) = \frac{1}{N_i} \sum_{j=1}^M v_{ij} \int_0^{T_j} K\left(x(0),\gamma_j(t)\right).\label{eq:coefficients}
\end{equation}

\begin{algorithm}
\caption{Pseudocode for the dynamic mode decomposition routine of Section \ref{sec:occkernel-dmd}. The choice of numerical integration routine can have a significant impact on the overall results, and it is advised that a high accuracy method is leveraged in practice.}
\label{alg:occDMD}
\begin{algorithmic}[1]
\STATE{Input: Sampled trajectories $\{ \gamma_{j} :[0,T] \to \mathbb{R}^n \}_{j=1}^M $, Kernel function $ K:\mathbb{R}^n\times\mathbb{R}^n\to\mathbb{R} $ of an RKHS, initial condition, $ x(0) \in \mathbb{R}^n $, and a numerical integration routine}
\STATE{Compute the Gram matrix $ G $ using \eqref{eq:integralInnerProduct} and a numerical integration routine}
\STATE{Compute the interaction matrix $ \mathcal{I} $ using \eqref{eq:integralInnerProduct} and a numerical integration routine}
\STATE{Compute eigenvalues, $\lambda_i$, and eigenvectors, $v_i$, of $G^{-1}\mathcal{I}^T$}
\STATE{Use \eqref{eq:coefficients} and a numerical integration routine to compute the coefficients $ \varphi_i(x(0)) $}
\RETURN{Liouville modes, $ \xi $, coefficients, $ \varphi_i(x(0)) $, and eigenvalues $ \lambda_i $}
\end{algorithmic}
\end{algorithm}

\subsection{Modifications for the Scaled Liouville Operator DMD Method}


Since Liouville operators are not generally compact, convergence of the finite rank representation $ P_\alpha A_f $, restricted to $\vspan{\alpha}$, to $A_f$ cannot be guaranteed as $M\to\infty$. Convergence of the finite rank representation can be established in the case of the scaled Liouville operators and the approximations obtained via DMD, under Assumption \ref{ass:kernels_in_domain_A_f}, are provably cogent. 

By Theorem 2, for an infinite collection of trajectories $\{ \gamma_{i}\}_{i=1}^\infty$ with a dense collection of corresponding occupation kernels, $\{ \Gamma_{\gamma_{i}}\}_{i=1}^\infty \subset H$, the resultant sequence of finite rank operators $P_{\alpha_M}A_{f,a} P_{\alpha_M}$ converges to $A_{f,a}$, where $\alpha_M := \{ \Gamma_{\gamma_{1}},\ldots, \Gamma_{\gamma_{M}}\}$. Consequently, the spectrum of $[P_{\alpha_M}A_{f,a}]_{\alpha_M}^{\alpha_M}$, the finite rank representation of $ P_{\alpha_M} A_{f,a} $, restricted to $\vspan(\alpha)$, converges to that of $A_{f,a}$.\footnote{It should be noted that the operator $P_{\alpha_M} A_{f,a} P_{\alpha_M}$ is simply $P_{\alpha_M} A_{f,a}$ when restricted to $\vspan(\alpha_M)$ as $P_{\alpha_M} g = g$ for all $g \in \vspan(\alpha_M)$.
}

Furthermore, when $a$ is sufficiently close to $1$ and the observed trajectories contained in a compact set are perturbed to within machine precision, the finite rank representations of $A_{f,a}$ and $A_{f}$ are computationally indistinguishable.

DMD using scaled Liouville operators is similar to the unscaled case. In particular, recall that for $|a| < 1$ and $f$ as above, $A^*_{f,a} \Gamma_{\gamma_{i}} = K(\cdot,a\gamma_i(T_i)) - K(\cdot,a\gamma_i(0)).$ Hence, a finite rank representation of $A_f$, obtained from restricting and projecting to $\vspan(\alpha)$, is given as 
\[
[P_\alpha A_{f,a}]_{\alpha}^{\alpha} = G^{-1}\mathcal{I}_a^T,
\]
where
\[\footnotesize\medmuskip=0mu\thickmuskip=0mu
    \mathcal{I}_a:=\begin{pmatrix}
        \langle K(\cdot,a\gamma_1(T_1)) - K(\cdot, a\gamma_1(0)), \Gamma_{\gamma_1} \rangle_H & \cdots & \langle K(\cdot,a\gamma_M(T_M)) - K(\cdot, a\gamma_M(0)), \Gamma_{\gamma_1} \rangle_H\\
        \vdots & \ddots & \vdots\\
        \langle K(\cdot,a\gamma_1(T_1)) - K(\cdot, a\gamma_1(0)), \Gamma_{\gamma_M} \rangle_H & \cdots & \langle K(\cdot,a\gamma_M(T_M)) - K(\cdot, a\gamma_M(0)), \Gamma_{\gamma_M} \rangle_H
    \end{pmatrix}.
\]
The approximate normalized eigenfunctions, $\{ \varphi_{i,a} \}_{i=1}^M$, for $A_{f,a}$ may then be obtained in an identical fashion as for the Liouville operator.

Thus, the expression of the full state observable, $g_{id}$, in terms of the eigenfunctions yields $g_{id}(x) \approx \sum_{i=1}^M \xi_{i,a} \varphi_{i,a}(x)$ with (scaled) Liouville modes $\xi_{i,a}$.

As the eigenfunctions satisfy \[\dot \varphi_{i,a}(a x(t)) = a \nabla \varphi_i(a x(t)) f(x(t)) = A_{f,a} \varphi_{i,a}(x(t)) = \lambda_{i,a} \varphi_{i,a}(x(t)),\] it can be seen that $\varphi_{i,a}(x(t)) \neq e^{t\lambda_{i,a}} \varphi_{i,a}(x(0)).$ When $a$ is close to $1$, it can be demonstrated that $\varphi_{i,a}(x(t))$ is very nearly equal to $e^{t\lambda_{i,a}} \varphi_{i,a}(x(0)),$ and the error can be controlled when $x(t)$ remains in a compact domain or workspace.

\begin{proposition}\label{prop:estimate-exponential}
Let $H$ be a RKHS of twice continuously differentiable functions over $\mathbb{R}^n$, $f$ be Lipschitz continuous, and suppose that $\varphi_{i,a}$ is an eigenfunction of $A_{f,a}$ with eigenvalue $\lambda_{i,a}$. Let $D$ be a compact subset of $\mathbb{R}^n$ that contains $x(t)$ for all $0 < t < T$. In this setting, if $\lambda_{i,a} \to \lambda_{i,1}$ and $\varphi_{i,a}(x(0)) \to \varphi_{i,1}(x(0))$ as $a \to 1^-$, then \[\sup_{0 \le t \le T} \|\varphi_{i,a}(x(t)) - e^{\lambda_{i,a}t}\varphi_{i,a}(x(0))\|_2 \to 0.\]
\end{proposition}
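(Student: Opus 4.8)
The plan is to treat $t\mapsto\varphi_{i,a}(x(t))$, along the fixed trajectory $x$, as the solution of a small perturbation of the scalar linear ODE $\dot z=\lambda_{i,a}z$ and to read the estimate off from the variation-of-constants formula. Set $\psi_a(t):=\varphi_{i,a}(x(t))$. Since $f$ is Lipschitz, $x$ is $C^1$ with $\dot x=f(x)$; since $\varphi_{i,a}\in H$ and $H$ consists of $C^2$ functions, the chain rule gives $\dot\psi_a(t)=\nabla\varphi_{i,a}(x(t))\cdot f(x(t))$. Reading the eigenrelation $A_{f,a}\varphi_{i,a}=\lambda_{i,a}\varphi_{i,a}$ pointwise --- equality in the RKHS of continuous functions is pointwise equality, and $A_{f,a}\varphi_{i,a}$ is by definition the function $x\mapsto a\nabla\varphi_{i,a}(ax)f(x)$ --- gives $\lambda_{i,a}\psi_a(t)=a\,\nabla\varphi_{i,a}(a x(t))\cdot f(x(t))$. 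Subtracting,
\[
\dot\psi_a(t)=\lambda_{i,a}\,\psi_a(t)+r_a(t),\qquad r_a(t):=\bigl(\nabla\varphi_{i,a}(x(t))-a\,\nabla\varphi_{i,a}(a x(t))\bigr)\cdot f(x(t)).
\]

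Variation of constants then gives $\varphi_{i,a}(x(t))-e^{\lambda_{i,a}t}\varphi_{i,a}(x(0))=\int_0^t e^{\lambda_{i,a}(t-s)}r_a(s)\,ds$, hence
\[
\sup_{0\le t\le T}\bigl\|\varphi_{i,a}(x(t))-e^{\lambda_{i,a}t}\varphi_{i,a}(x(0))\bigr\|_2\le T\,C_\lambda\,\sup_{0\le s\le T}\|r_a(s)\|_2 ,
\]
where $C_\lambda:=\exp\bigl(T\sup_a|\operatorname{Re}\lambda_{i,a}|\bigr)$ is finite and uniform in $a$ near $1$ because $\lambda_{i,a}\to\lambda_{i,1}$. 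So it remains to prove $\sup_{0\le s\le T}\|r_a(s)\|_2\to 0$ as $a\to1^-$.

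For that, let $M_f:=\sup_{x\in D}\|f(x)\|_2<\infty$, let $R:=\sup_{x\in D}\|x\|_2<\infty$, and let $B$ be the closed ball of radius $R$ about the origin; $B$ is convex and compact, and for every $a\in(0,1]$ and $s\in[0,T]$ both $x(s)$ and $a x(s)$, and the segment between them, lie in $B$. Split
\[
\nabla\varphi_{i,a}(x(s))-a\,\nabla\varphi_{i,a}(a x(s))=\bigl(\nabla\varphi_{i,a}(x(s))-\nabla\varphi_{i,a}(a x(s))\bigr)+(1-a)\,\nabla\varphi_{i,a}(a x(s)).
\]
Because $H$ is a RKHS of twice continuously differentiable functions, every point-evaluation of a first- or second-order partial derivative is a bounded functional on $H$ whose norm (the square root of the corresponding diagonal value of the doubly differentiated kernel) is continuous, hence bounded on $B$; thus there are $c_B',c_B''$ with $\|\nabla g(z)\|_2\le c_B'\|g\|_H$ and $\|\nabla^2 g(z)\|\le c_B''\|g\|_H$ for all $z\in B$. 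Using the mean-value inequality on the first bracket along the segment in $B$, the first-derivative bound on the second term, $\|f(x(s))\|_2\le M_f$, and the normalization $\|\varphi_{i,a}\|_H=1$ of the DMD eigenfunctions, one gets $\|r_a(s)\|_2\le(1-a)\,(c_B'' R+c_B')\,M_f$, uniformly in $s\in[0,T]$. This tends to $0$ as $a\to1^-$, completing the proof.

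I expect the main obstacle to be the \emph{uniform-in-$a$} control of $\varphi_{i,a}$ and its first two derivatives on $B$: this is precisely what the $C^2$ hypothesis on $H$ supplies (derivative point-evaluations are bounded on $H$ by a closed-graph argument, with representer norms varying continuously in the base point), and it is why the normalization $\|\varphi_{i,a}\|_H=1$ is used, since without some such bound $r_a$ need not be $O(1-a)$. The hypotheses $\lambda_{i,a}\to\lambda_{i,1}$ and $\varphi_{i,a}(x(0))\to\varphi_{i,1}(x(0))$ enter only lightly --- the former to make $C_\lambda$ uniform --- and are the ingredients one would add to upgrade the conclusion to $\varphi_{i,a}(x(t))\to\varphi_{i,1}(x(t))$ uniformly on $[0,T]$; the remaining steps (chain rule, variation of constants, the supremum bound) are routine.
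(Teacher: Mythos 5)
Your proposal is correct and follows essentially the same route as the paper's proof: read the eigenrelation pointwise to get a perturbed scalar ODE along the trajectory, bound the residual by $O(1-a)$ using the mean value inequality together with uniform-in-$a$ bounds on $\nabla\varphi_{i,a}$ and $\nabla^2\varphi_{i,a}$ obtained from the boundedness of derivative-evaluation functionals and $\|\varphi_{i,a}\|_H=1$, and conclude by variation of constants with $\lambda_{i,a}$ bounded near $a=1$. Your only refinement is the explicit convex ball $B$ containing both $x(s)$ and $ax(s)$ for the mean-value step, a bookkeeping detail the paper glosses over by working with $D$.
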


\begin{proof}
The proof has been relegated to the appendix to ease exposition.\qed
\end{proof}

Thus, under the hypothesis of Proposition \ref{prop:estimate-exponential}, for $a$ sufficiently close to $1$, a data-driven model for a trajectory $x$ satisfying $\dot x = f(x)$ is established as
\begin{equation}
    x(t) \approx \sum_{i=1}^M \xi_{i,a} \varphi_{i,a}(x(0)) e^{\lambda_{i,a} t}.\label{eq:scaled-data-driven_model}
\end{equation}

The principle advantage of using scaled Liouville operators is that these operators are compact over the Bargmann-Fock space for a large collection of nonlinear dynamics. Moreover, the sequence finite rank operators obtained through the DMD procedure achieve norm convergence when a sequence of trajectories corresponds to a collection of occupation kernels that are dense in the Hilbert space.

\begin{theorem}\label{thm:DMD-norm-convergence}
Let $|a| < 1$. Suppose that $\{ \gamma_{i}:[0,T_i] \to \mathbb{R}^n \}_{i=1}^\infty$ is a sequence of trajectories satisfying $\dot \gamma = f(\gamma)$ for a dynamical system $f$ corresponding to a compact scaled Liouville operator, $A_{f,a}$. If the collection of functions, $\{ \Gamma_{\gamma_i} \}_{i=1}^\infty$ is dense in the Bargmann-Fock space, then the sequence of operators $\{ P_{\alpha_M} A_{f,a} P_{\alpha_M} \}_{M=1}^\infty$ converges to $A_{f,a}$ in the norm topology, where $\alpha_M = \{ \Gamma_{\gamma_1}, \ldots, \Gamma_{\gamma_M} \}$. 
\end{theorem}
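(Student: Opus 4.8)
The plan is to prove this as a clean functional-analytic statement: if $W$ is a compact operator on a Hilbert space $H$ and $\{P_M\}$ is a sequence of orthogonal projections converging strongly to the identity, then $P_M W P_M \to W$ in operator norm. Here $W = A_{f,a}$ is compact by Theorem \ref{thm:scaled-compact} (or Corollary \ref{cor:polynomials-compact}), and $P_M = P_{\alpha_M}$ is the orthogonal projection onto $\vspan(\alpha_M) = \vspan\{\Gamma_{\gamma_1},\ldots,\Gamma_{\gamma_M}\}$. So the first step is to verify the two hypotheses of this abstract lemma in our setting: (i) $A_{f,a}$ is a compact operator on $F^2(\mathbb{R}^n)$, which is exactly Theorem \ref{thm:scaled-compact}, and (ii) $P_{\alpha_M} \to I$ strongly. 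For (ii), the nested spaces $\vspan(\alpha_M)$ are increasing in $M$, so $\bigcup_M \vspan(\alpha_M)$ is dense in $F^2(\mathbb{R}^n)$ precisely because $\{\Gamma_{\gamma_i}\}_{i=1}^\infty$ is assumed dense; a standard argument then gives $P_{\alpha_M} g \to g$ for every $g \in H$ (approximate $g$ by an element of some $\vspan(\alpha_{M_0})$ within $\varepsilon$, and for $M \ge M_0$ the projection error is no worse than $\varepsilon$ since $P_{\alpha_M}$ is a contraction fixing $\vspan(\alpha_{M_0})$).

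The second step is the proof of the abstract lemma itself. Write $P_M W P_M - W = (P_M - I) W P_M + W(P_M - I)$, so by the triangle inequality it suffices to show $\|(P_M - I)W\| \to 0$ and $\|W(P_M - I)\| \to 0$ (the first of these, combined with $\|P_M\| \le 1$, controls the term $(P_M-I)WP_M$). The key fact is that strong convergence $P_M \to I$ upgrades to \emph{uniform} convergence on precompact sets: if $P_M x \to x$ pointwise and $x_M \to x$ in a totally bounded set, one still gets $\|P_M x_M - x\| \to 0$. Since $W$ is compact, $W(\overline{B_H})$ is precompact, so $\sup_{\|x\| \le 1}\|(P_M - I)Wx\| = \|(P_M - I)W\| \to 0$. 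For the other term, apply the same reasoning to the adjoint: $W^*$ is compact, $\|W(P_M - I)\| = \|(P_M - I)W^*\| \to 0$ (using that $P_M$ is self-adjoint, so $(W(P_M-I))^* = (P_M - I)W^*$). Combining the two estimates gives $\|P_M W P_M - W\| \to 0$.

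The main obstacle is not any single hard inequality but rather making sure the functional-analytic framing is airtight: $A_{f,a}$ is a priori an \emph{unbounded} densely-defined operator (it has the canonical domain $\mathcal{D}(A_{f,a})$), and the theorem implicitly relies on Theorem \ref{thm:scaled-compact} upgrading it to a genuinely bounded, everywhere-defined compact operator on $F^2(\mathbb{R}^n)$ — only then does $P_{\alpha_M} A_{f,a} P_{\alpha_M}$ make sense as a bounded operator and only then is the abstract lemma applicable. I would state this explicitly at the outset. A secondary point worth a sentence is that $P_{\alpha_M} A_{f,a} P_{\alpha_M}$ is the operator whose matrix on $\vspan(\alpha_M)$ is the computed $[P_{\alpha_M}A_{f,a}]_{\alpha_M}^{\alpha_M}$ (noted in the footnote preceding the theorem), so norm convergence of the operators is what justifies, via standard perturbation theory for compact operators, the convergence of the spectra claimed in the surrounding discussion. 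Everything else — the decomposition $P_MWP_M - W = (P_M - I)WP_M + W(P_M - I)$ and the precompactness argument — is routine.
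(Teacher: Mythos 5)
Your proposal is correct, but it follows a genuinely different route from the paper's proof. The paper also reduces to the abstract fact that $P_{\alpha_M} T P_{\alpha_M} \to T$ in norm for any compact $T$ and any sequence with dense span, but it proves that fact by first orthonormalizing the occupation kernels via Gram--Schmidt, writing $T$ through its singular value expansion $T = \sum_i \lambda_i \langle \cdot, v_i\rangle_H u_i$, truncating to a finite-rank $T_M$ with $\|T - T_M\| < \epsilon$, and then running explicit triangle-inequality and Cauchy--Schwarz estimates on the terms $\|T_M g - P_n T_M g\|_H$ and $\|T_M(g - P_n g)\|_H$, using smallness of $\|u_i - P_n u_i\|_H$ and $\|v_i - P_n v_i\|_H$ to conclude $\|T - P_n T P_n\| \le 4\epsilon$. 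You instead use the algebraic decomposition $P_M W P_M - W = (P_M - I)W P_M + W(P_M - I)$, kill the first term by the fact that strong convergence $P_M \to I$ of contractions is uniform on the precompact set $W(\overline{B_H})$, and kill the second by passing to the adjoint, $\|W(P_M - I)\| = \|(P_M - I)W^*\|$, with $W^*$ compact by Schauder's theorem. Your route avoids both the Gram--Schmidt step and the singular value decomposition and is structurally cleaner; the paper's route is more quantitative (it exhibits explicit $\epsilon$-bounds and needs only the SVD of compact operators, not Schauder). Both arguments rest on the same preliminary facts you spell out: compactness (hence boundedness and everywhere-definedness) of $A_{f,a}$, and $P_{\alpha_M} \to I$ strongly, which follows from the density hypothesis exactly as in your nested-subspace contraction argument. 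Your remarks on why boundedness must be invoked before $P_{\alpha_M} A_{f,a} P_{\alpha_M}$ even makes sense, and on the link to spectral convergence, are consistent with the paper's surrounding discussion.
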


\begin{proof}The proof has been relegated to the appendix to ease exposition.\qed\end{proof}

\section{Numerical Experiments}
\label{sec:experiments}

This section gives the results two collections of numerical experiments using the methods of the paper. 
The first surround the problem of flow across a cylinder, which has become a classic example for DMD. This provides a benchmark for comparison of the present method with kernel-based extended DMD. There it will be demonstrated that scaled Liouville modes and Liouville modes are very similar.

The second experiment performs a decomposition using electroencephalography (EEG) data, which has been sampled at 250 Hz over a period of 8 seconds. The high sampling frequency gives a large number of snapshots, which then leads to a high dimensional learning problem when using the snapshots alone. The purpose of this experiment is to demonstrate how the Liouville operator based DMD can incorporate the large number of snapshots to generate Liouville modes without discarding data.

\subsection{Flow Across a Cylinder}

This experiment utilizes the data set from \cite{kutz2016dynamic}, which includes snapshots of flow velocity and flow vorticity generated from a computational fluid dynamics simulation. The data correspond to the wake behind a circular cylinder, and the Reynolds number for this flow is $100$. The simulation was generated with time steps of $\Delta t = 0.02$ second and ultimately sampled every $10 \Delta t$ seconds yielding $151$ snapshots. Each snapshot of the system is a vector of dimension $89,351$. More details may be found in \cite[Chapter 2]{kutz2016dynamic}.

Figure \ref{fig:liouville-modes} presents the Liouville modes obtained from the cylinder vorticity data set, and this figure should be compared with Figure \ref{fig:scaled-liouville-modes}, which presents the scaled Liouville modes, with parameter $a = 0.99$, corresponding to the same data set. The modes were generated using the Gaussian kernel with $\mu = 500$ and the collection snapshots was subdivided into 147 trajectories of length $5$. Figure \ref{fig:reconstruction} compares shapshots of the true vorticity against vorticity reconstructed using the unscaled and scaled Liouville DMD models in \eqref{eq:data-driven_model} and \eqref{eq:scaled-data-driven_model}, respectively.

\begin{figure}
    \begin{minipage}{0.25\textwidth}
        \centering Real part
    \end{minipage}\hfill\begin{minipage}{0.25\textwidth}
        \centering Imaginary part
    \end{minipage}
    
    \begin{minipage}{1.0\textwidth}
        \raisebox{-.5\height}{\includegraphics[width=0.25\textwidth]{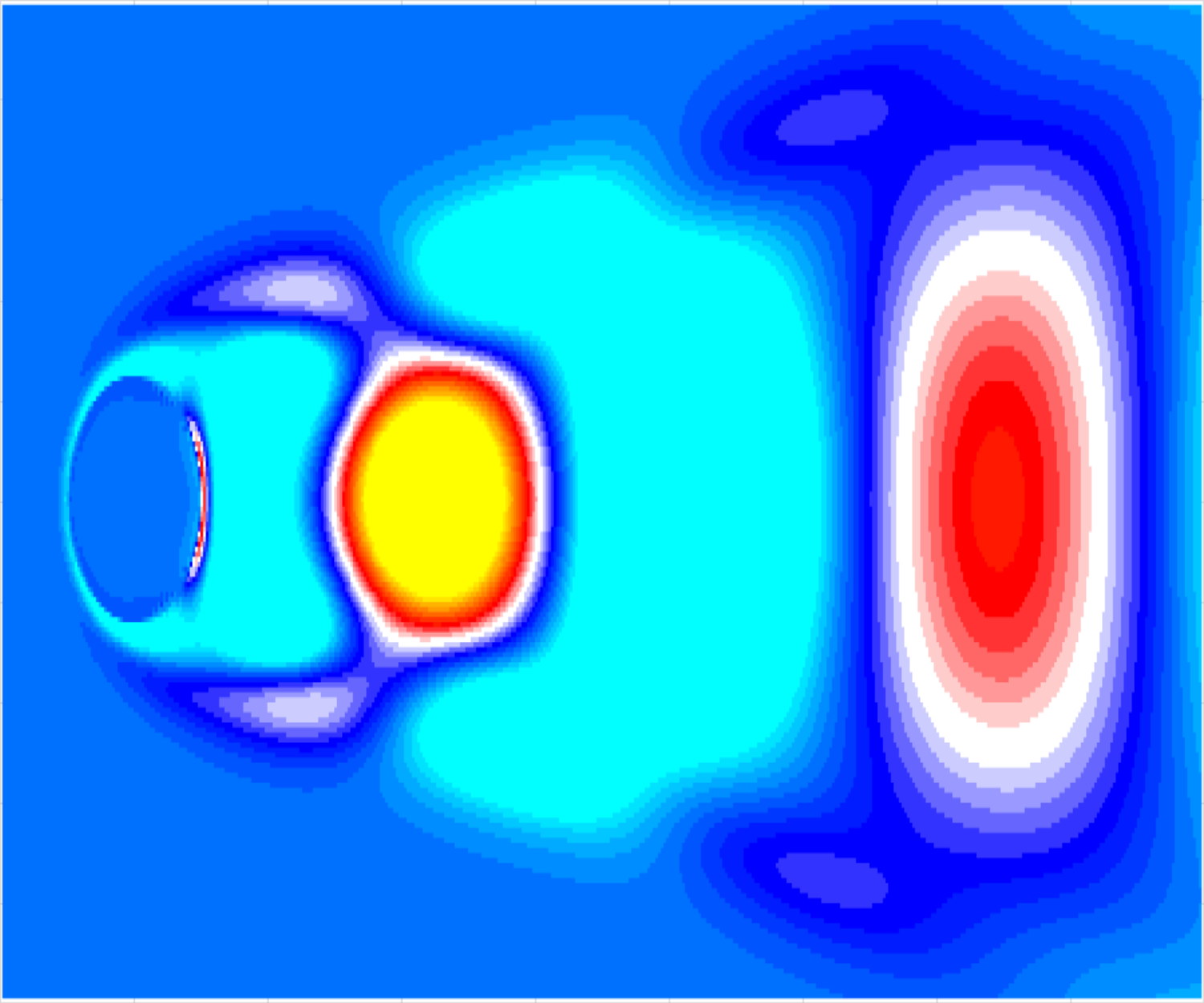}}
        \hfill
        Mode 142
        \hfill
        \raisebox{-.5\height}{\includegraphics[width=0.25\textwidth]{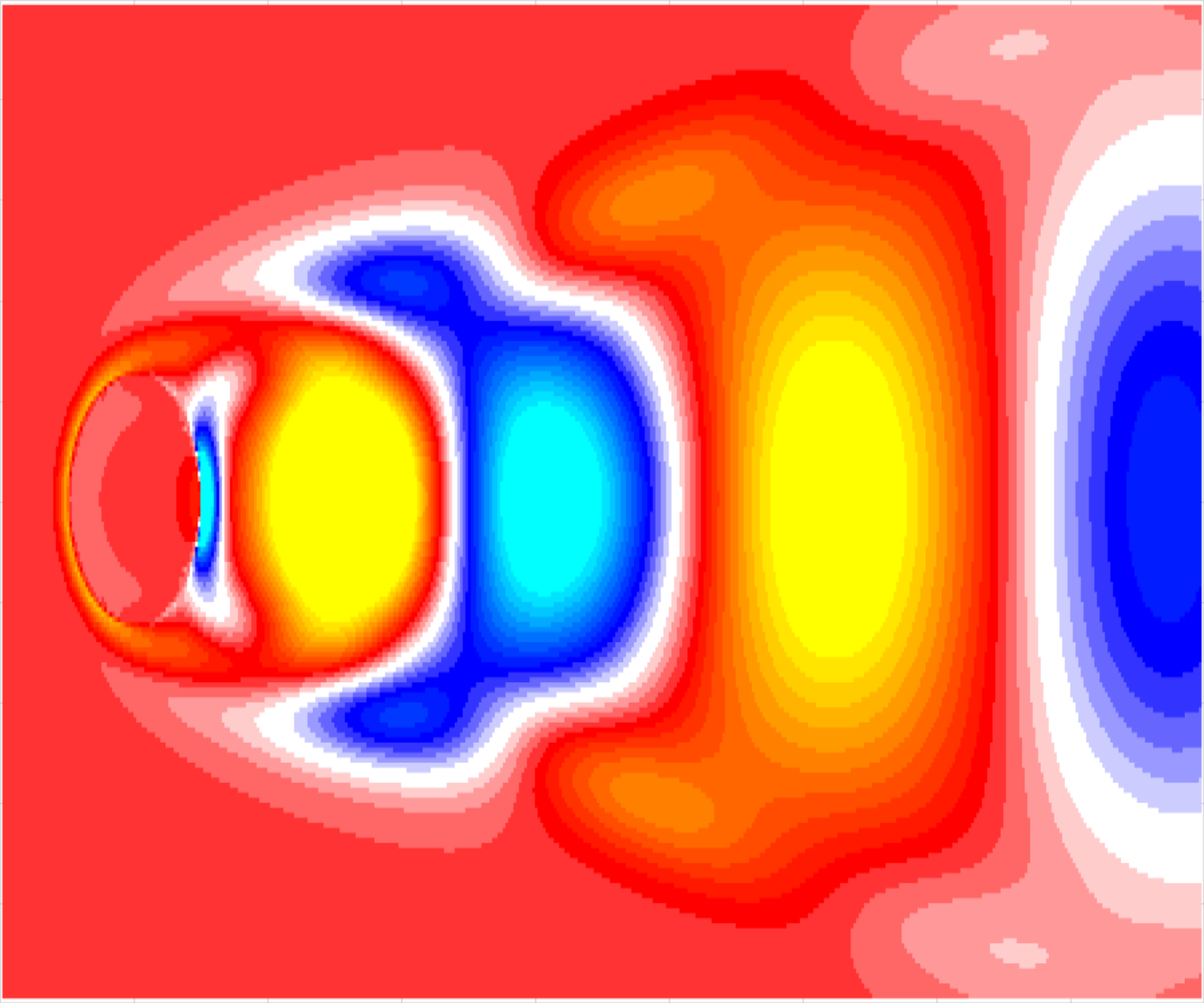}}
    \end{minipage}
    
    \begin{minipage}{1.0\textwidth}
        \raisebox{-.5\height}{\includegraphics[width=0.25\textwidth]{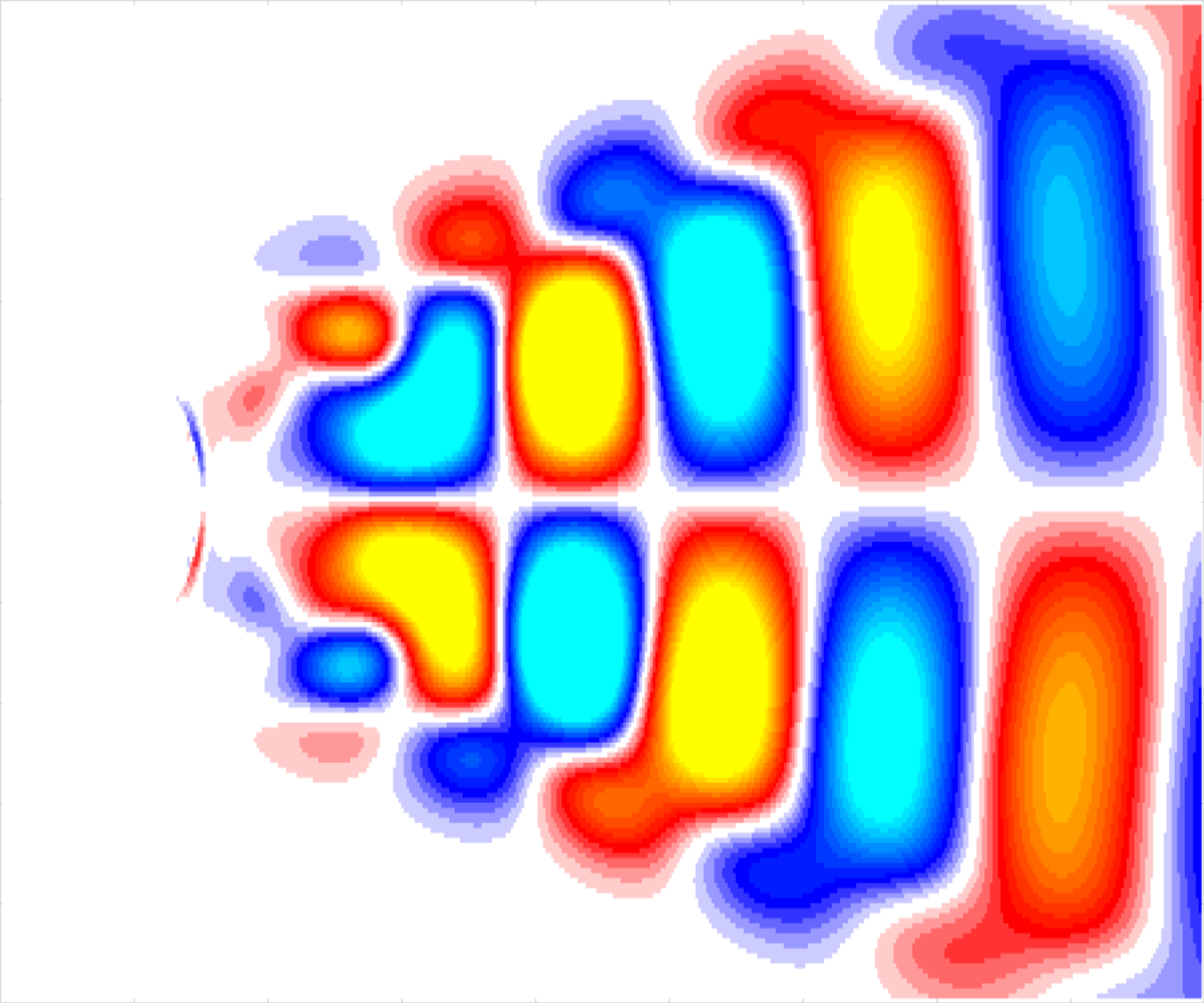}}
        \hfill
        Mode 120
        \hfill
        \raisebox{-.5\height}{\includegraphics[width=0.25\textwidth]{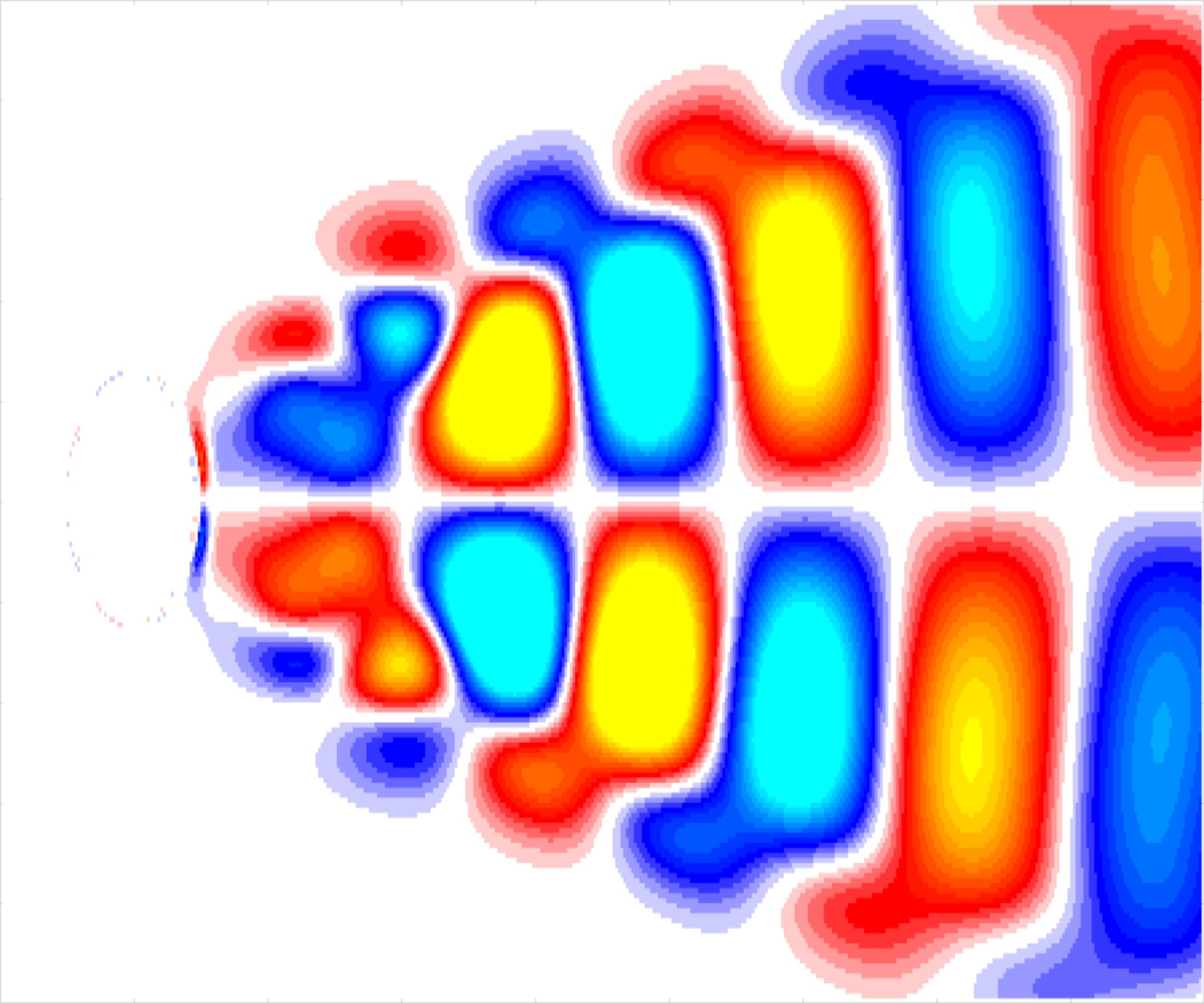}}
    \end{minipage}
    
    \begin{minipage}{1.0\textwidth}
        \raisebox{-.5\height}{\includegraphics[width=0.25\textwidth]{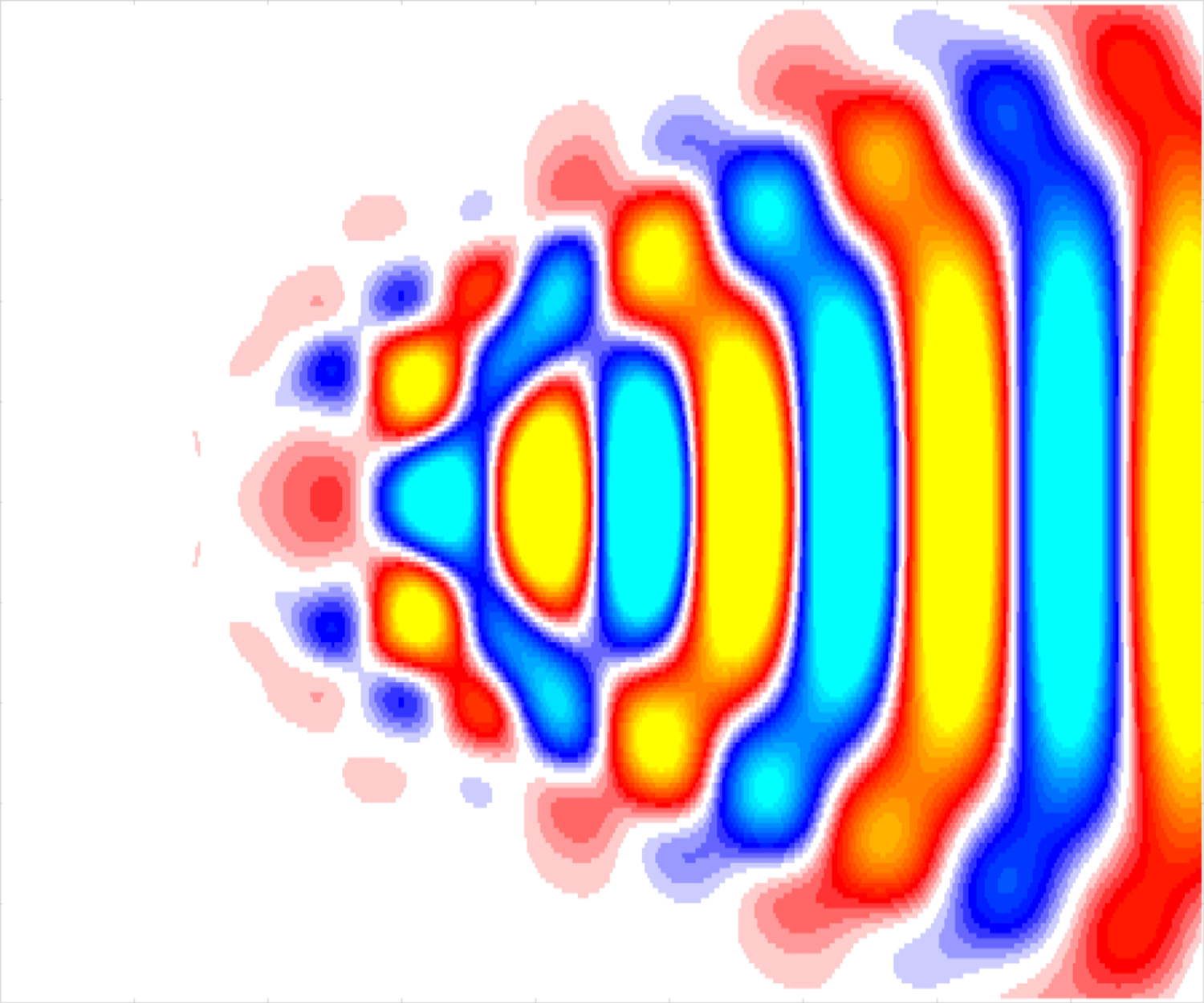}}
        \hfill
        Mode 103
        \hfill
        \raisebox{-.5\height}{\includegraphics[width=0.25\textwidth]{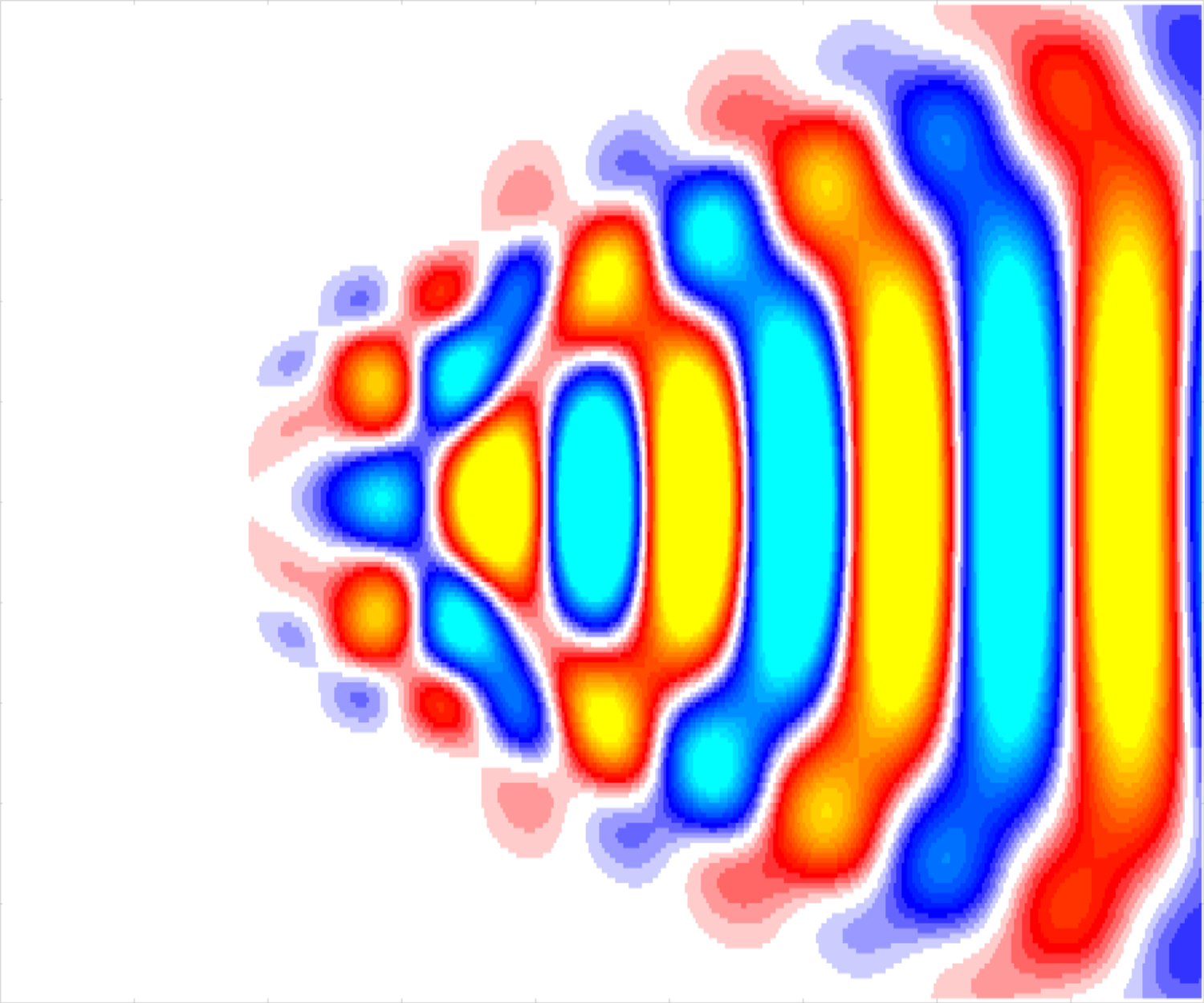}}
    \end{minipage}
    
    \begin{minipage}{1.0\textwidth}
        \raisebox{-.5\height}{\includegraphics[width=0.25\textwidth]{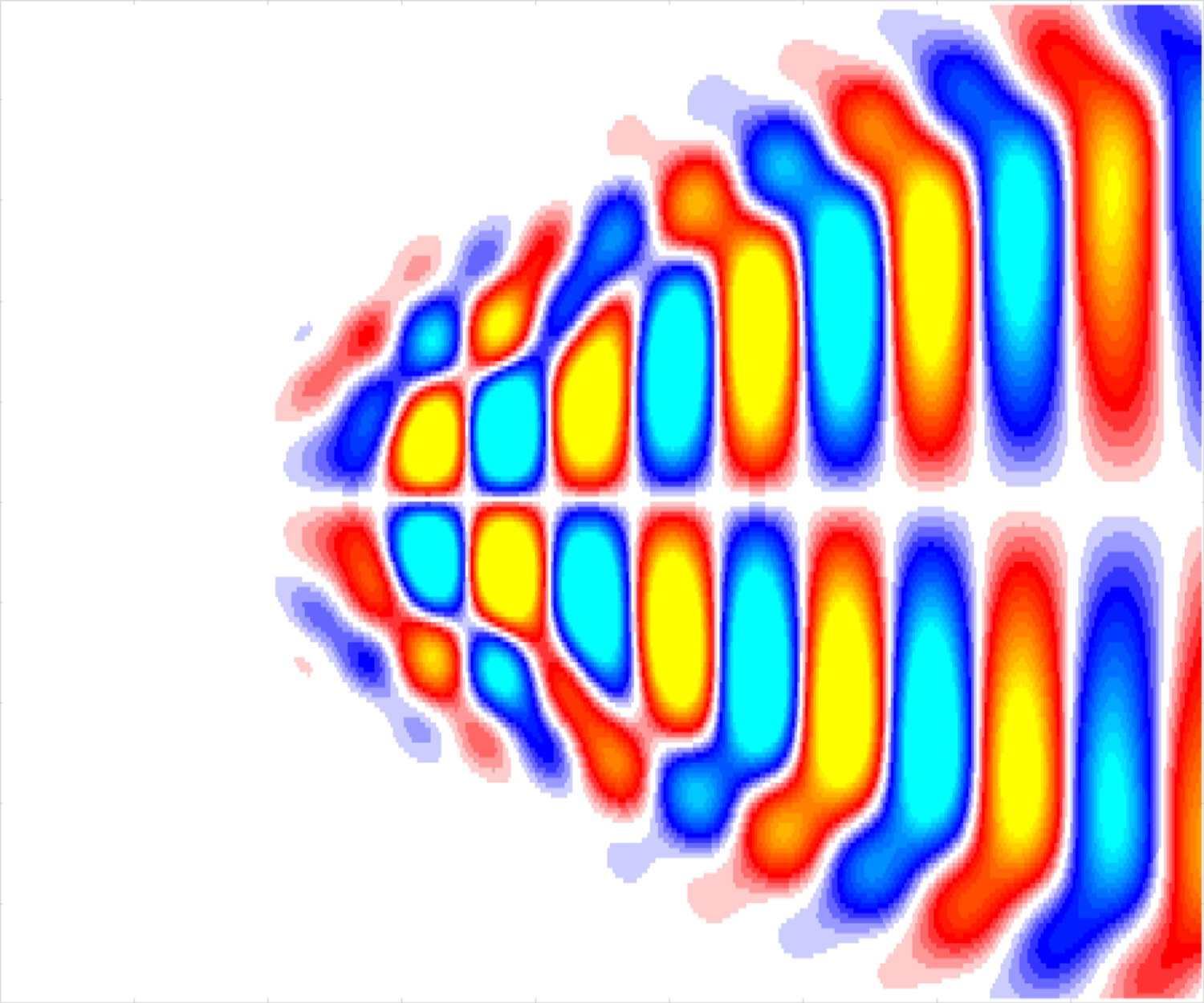}}
        \hfill
        Mode 94
        \hfill
        \raisebox{-.5\height}{\includegraphics[width=0.25\textwidth]{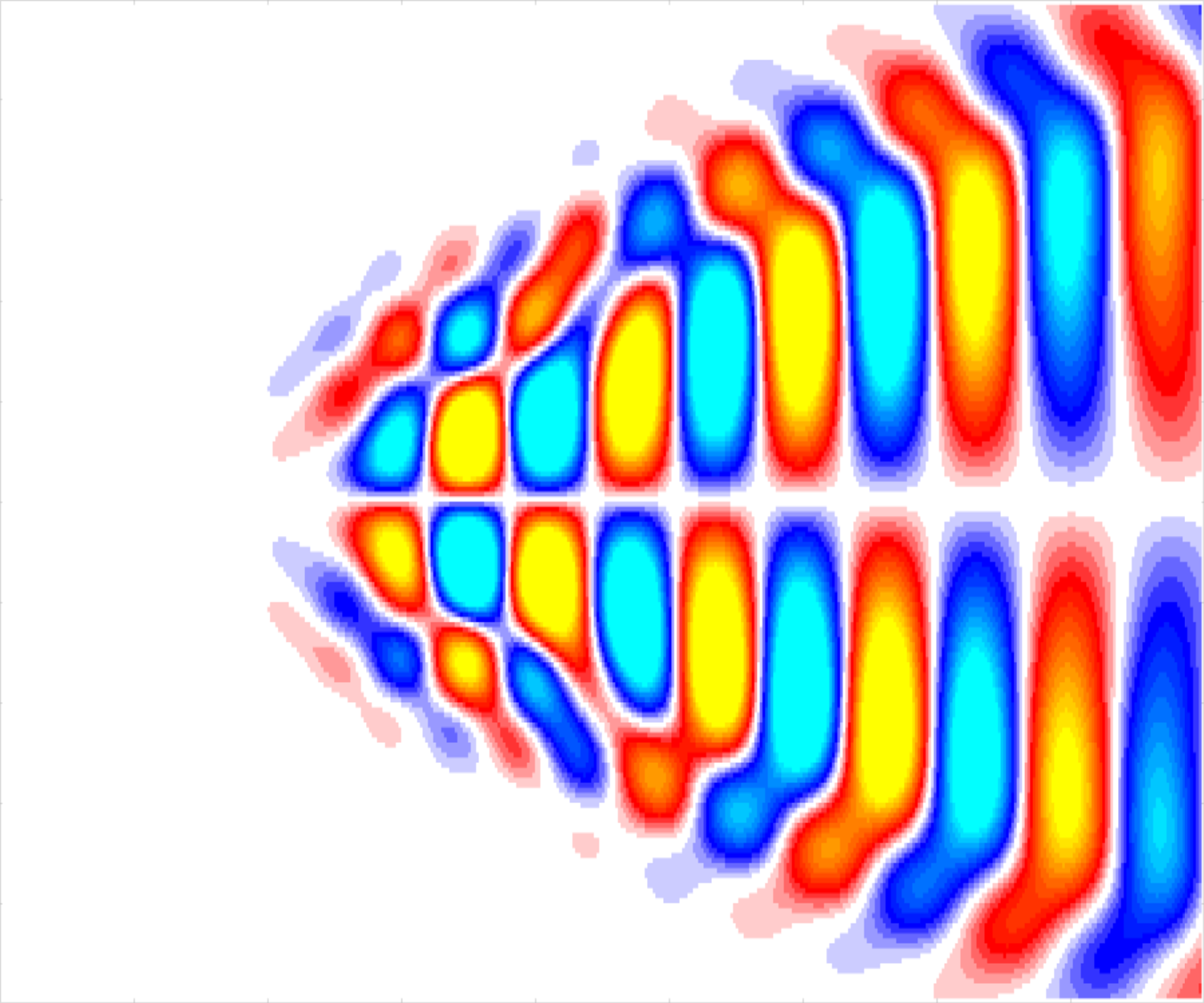}}
    \end{minipage}
    
    \begin{minipage}{1.0\textwidth}
        \raisebox{-.5\height}{\includegraphics[width=0.25\textwidth]{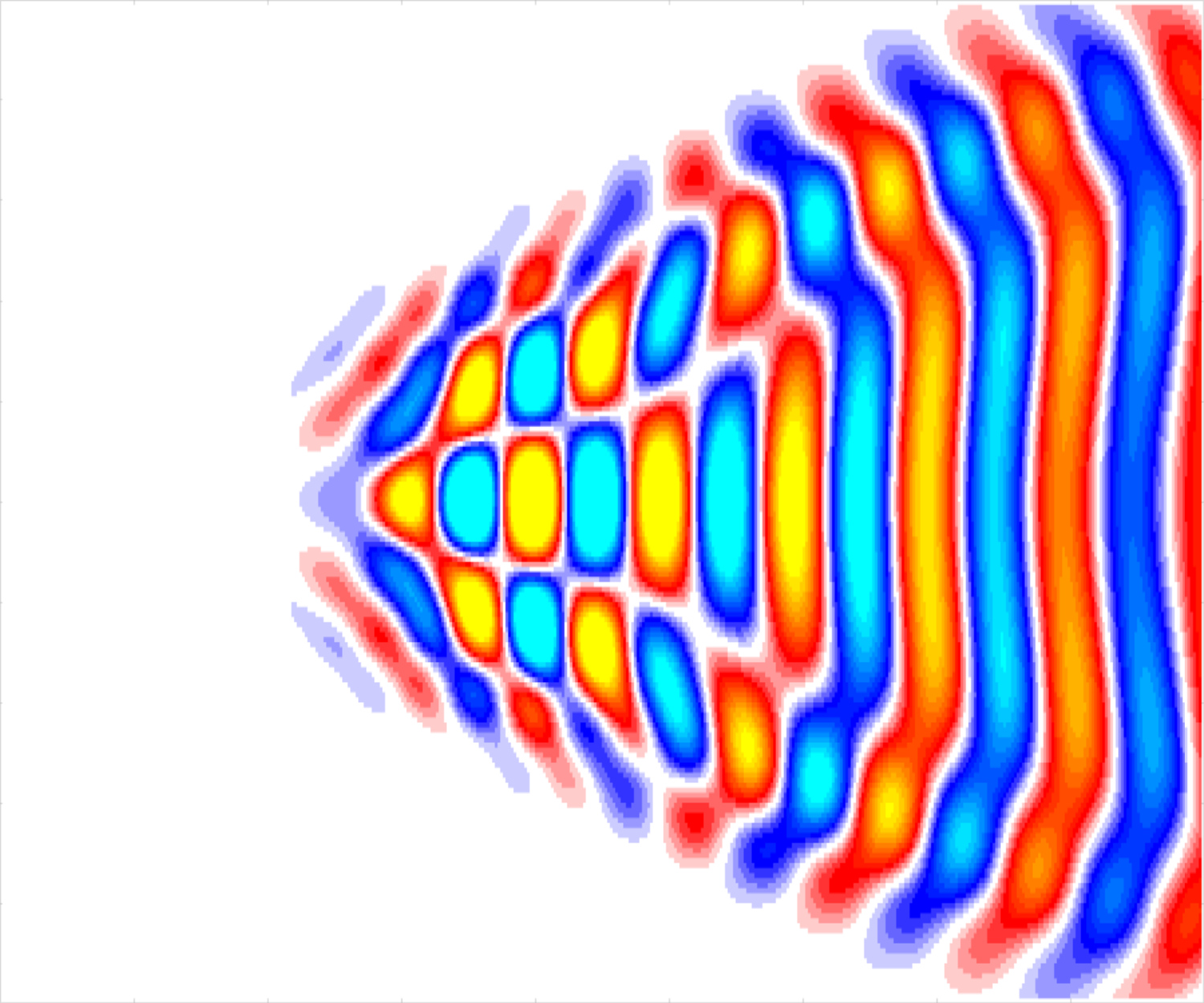}}
        \hfill
        Mode 80
        \hfill
        \raisebox{-.5\height}{\includegraphics[width=0.25\textwidth]{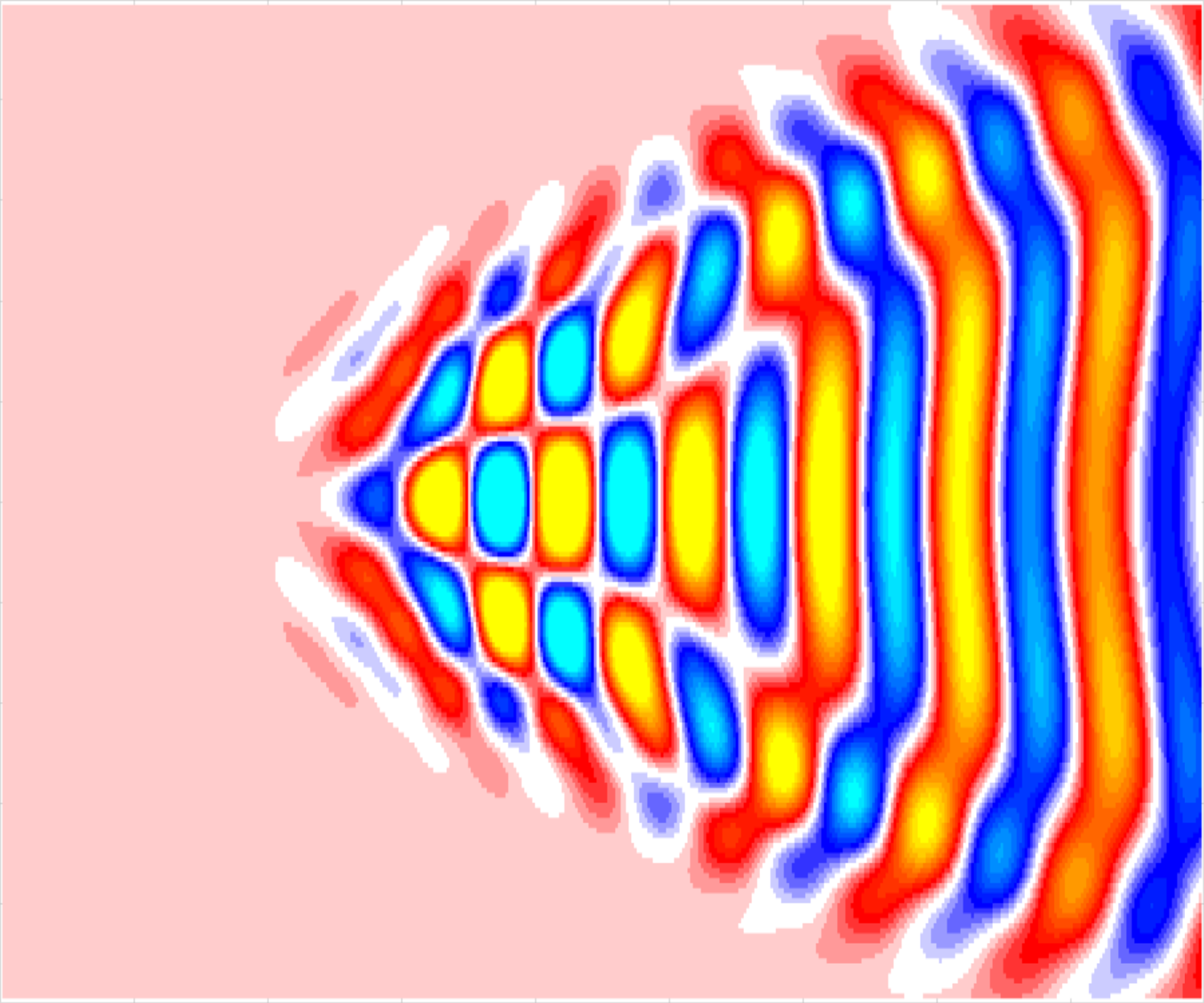}}
    \end{minipage}
    
    
    
    
    \caption{This figure presents the real and imaginary parts of a selection of ten Liouville modes determined by the continuous time DMD method given in the present manuscript corresponding to the vorticity of a flow across a cylinder (data available in \cite{kutz2016dynamic}). 
    }
    \label{fig:liouville-modes}
\end{figure}

\begin{figure}
    \begin{minipage}{0.25\textwidth}
        \centering Real part
    \end{minipage}\hfill\begin{minipage}{0.25\textwidth}
        \centering Imaginary part
    \end{minipage}
    
    \begin{minipage}{1.0\textwidth}
        \raisebox{-.5\height}{\includegraphics[width=0.25\textwidth]{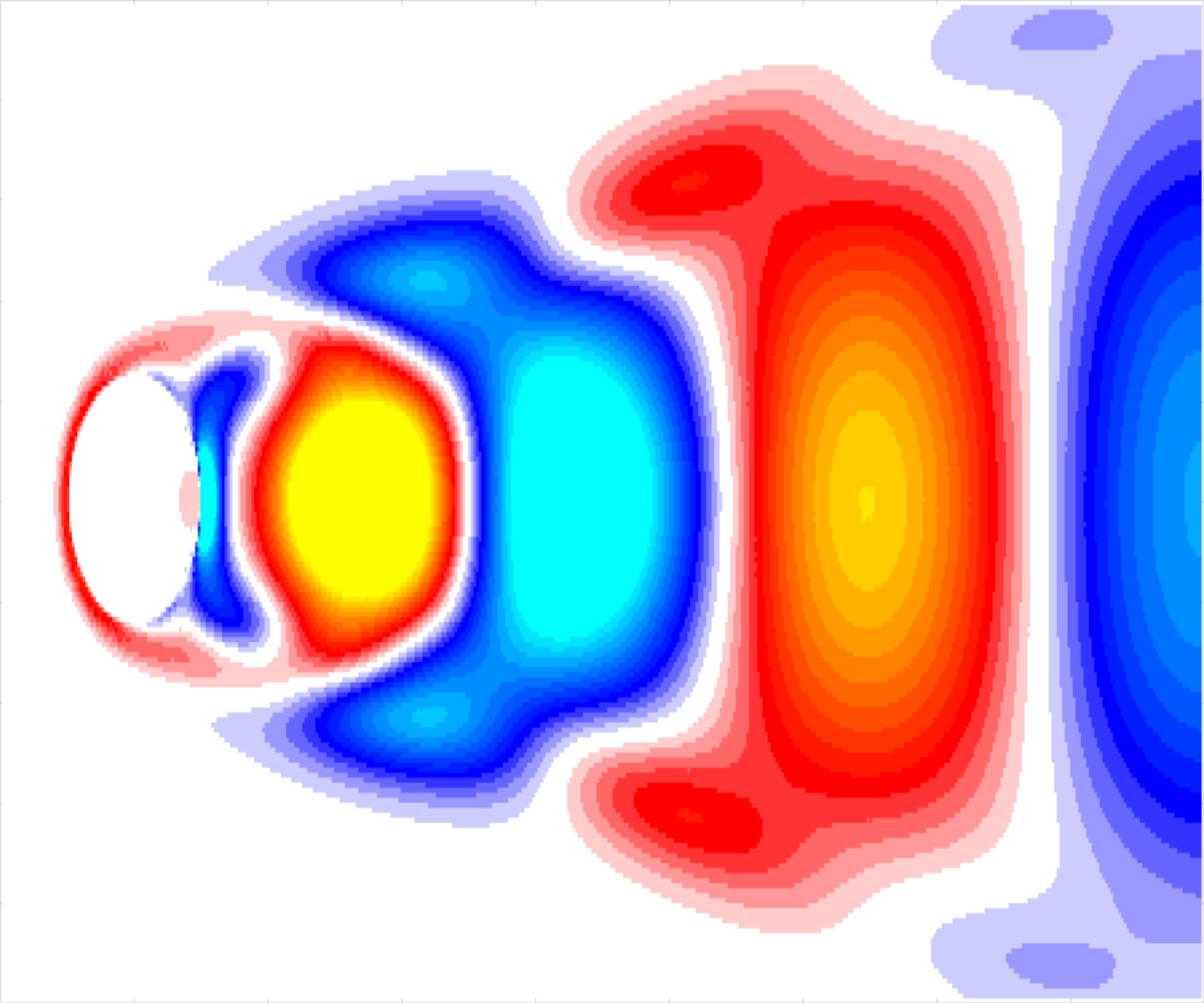}}
        \hfill
        Mode 142
        \hfill
        \raisebox{-.5\height}{\includegraphics[width=0.25\textwidth]{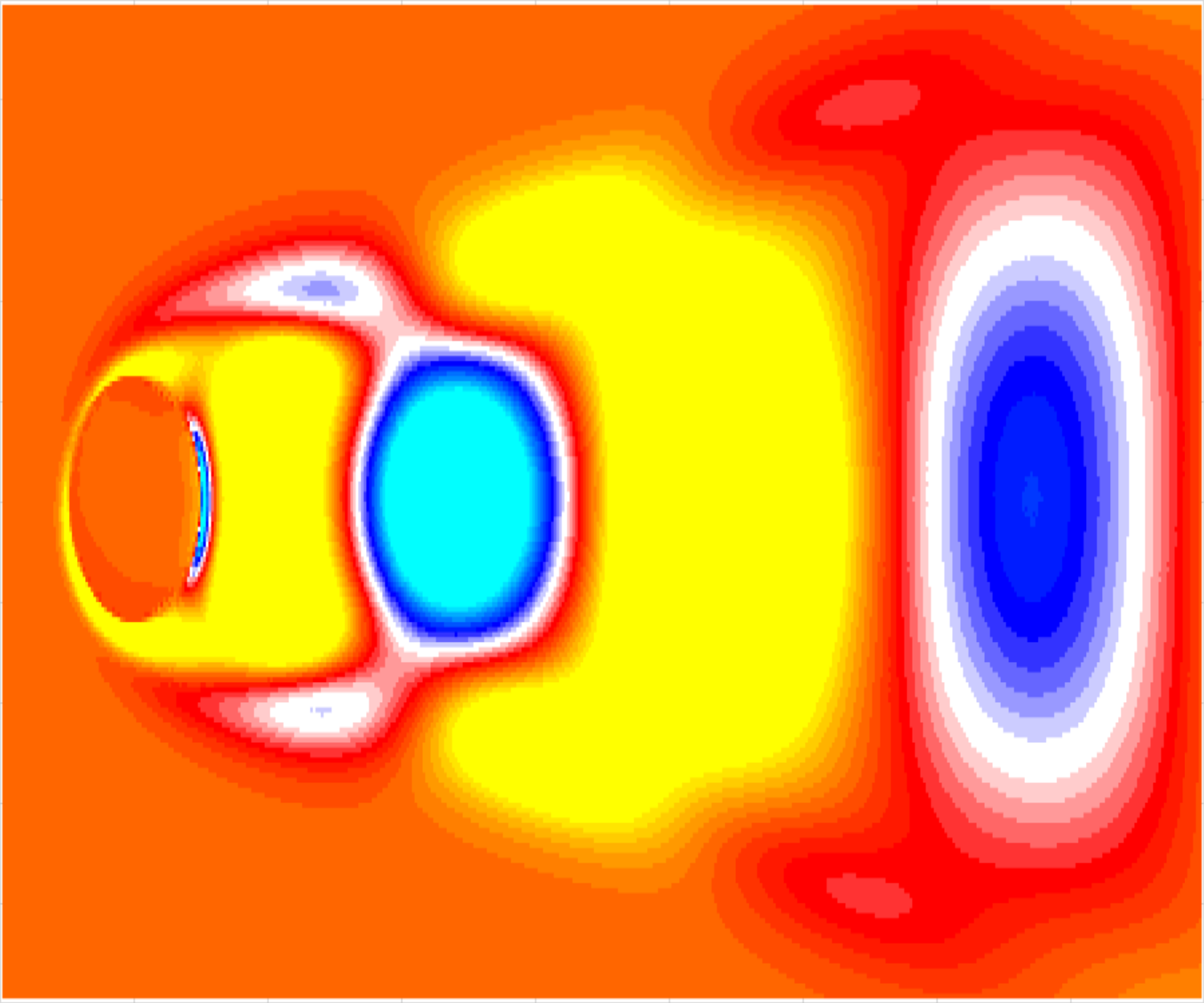}}
    \end{minipage}
    
    \begin{minipage}{1.0\textwidth}
        \raisebox{-.5\height}{\includegraphics[width=0.25\textwidth]{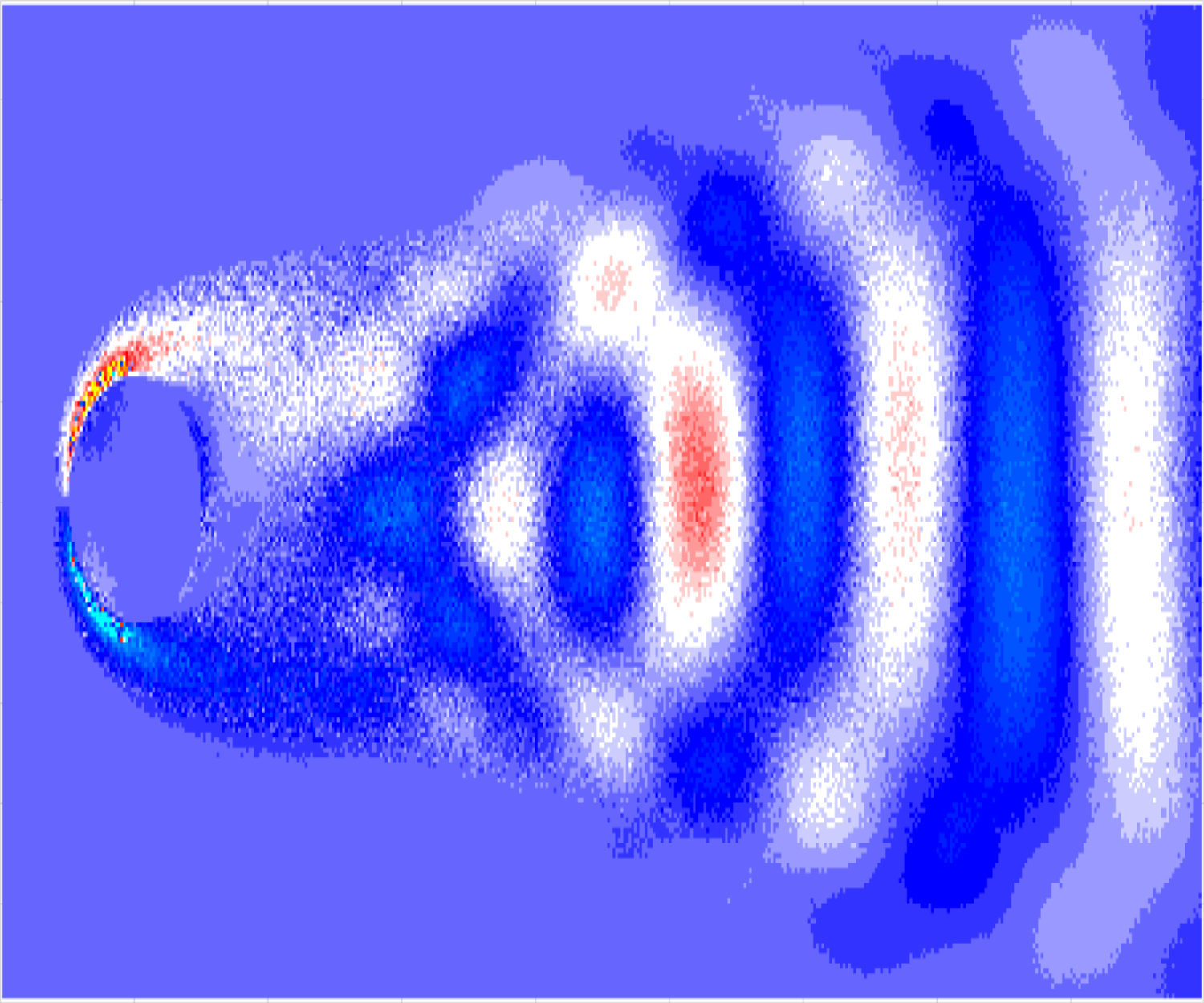}}
        \hfill
        Mode 120
        \hfill
        \raisebox{-.5\height}{\includegraphics[width=0.25\textwidth]{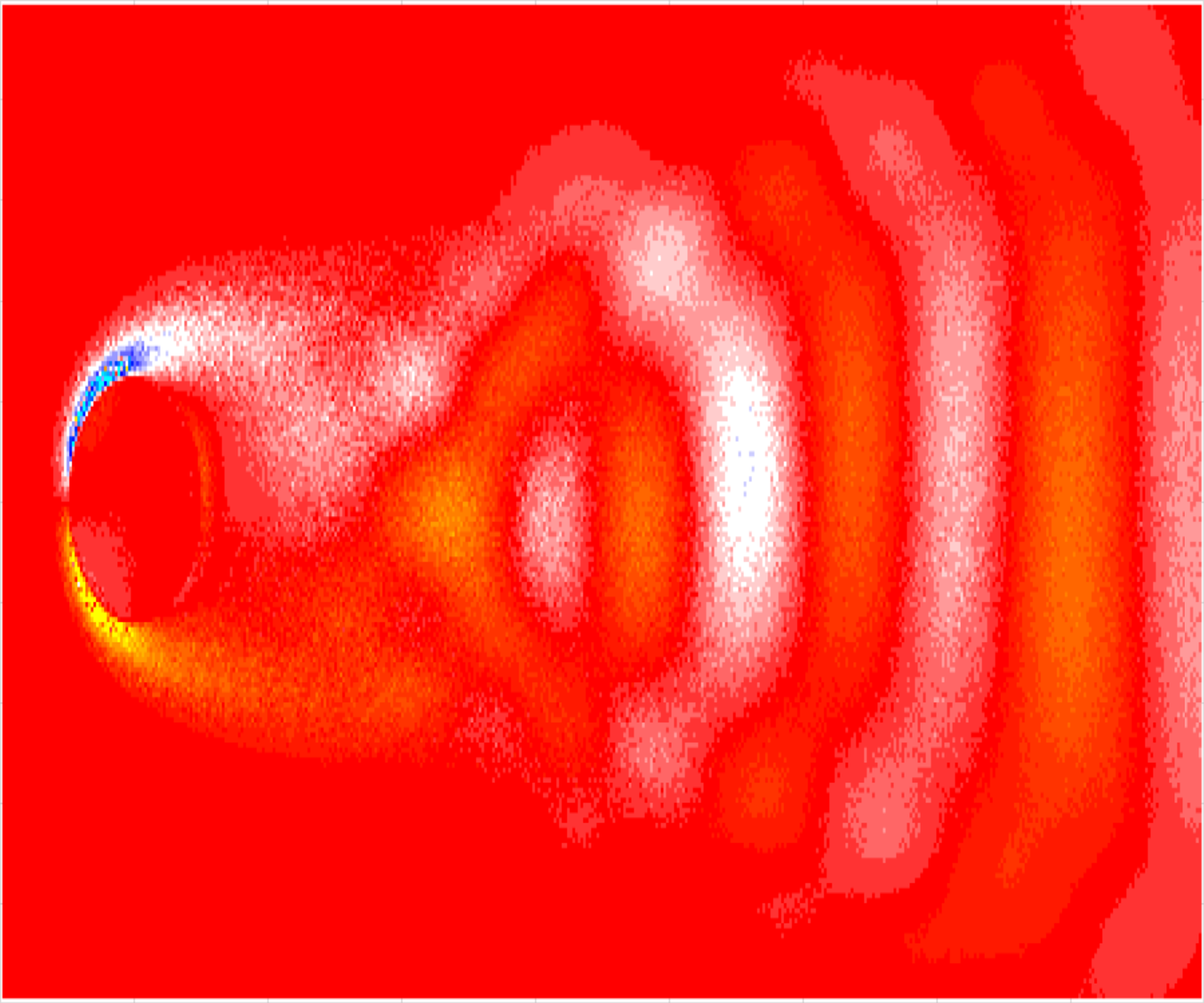}}
    \end{minipage}
    
    \begin{minipage}{1.0\textwidth}
        \raisebox{-.5\height}{\includegraphics[width=0.25\textwidth]{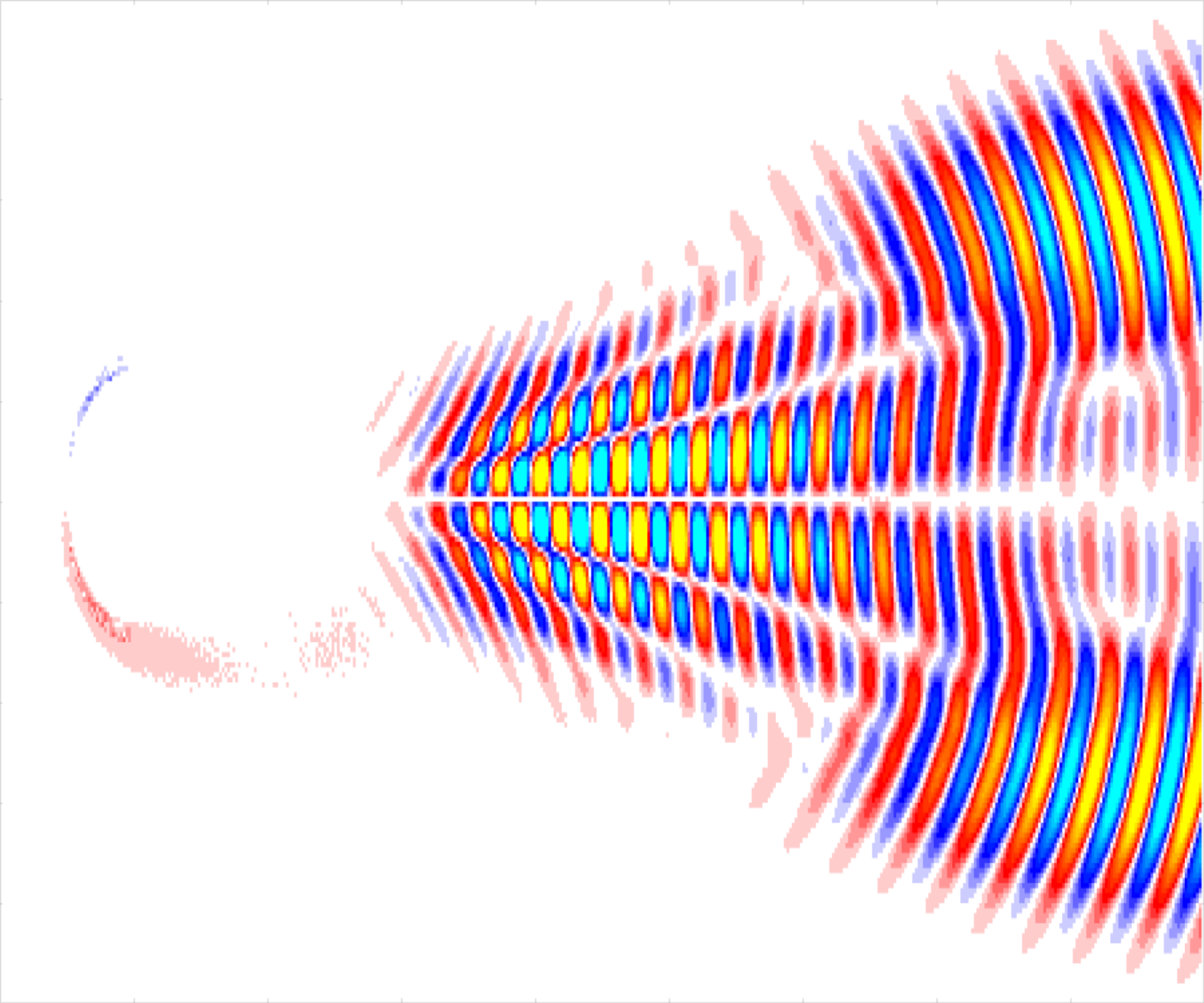}}
        \hfill
        Mode 103
        \hfill
        \raisebox{-.5\height}{\includegraphics[width=0.25\textwidth]{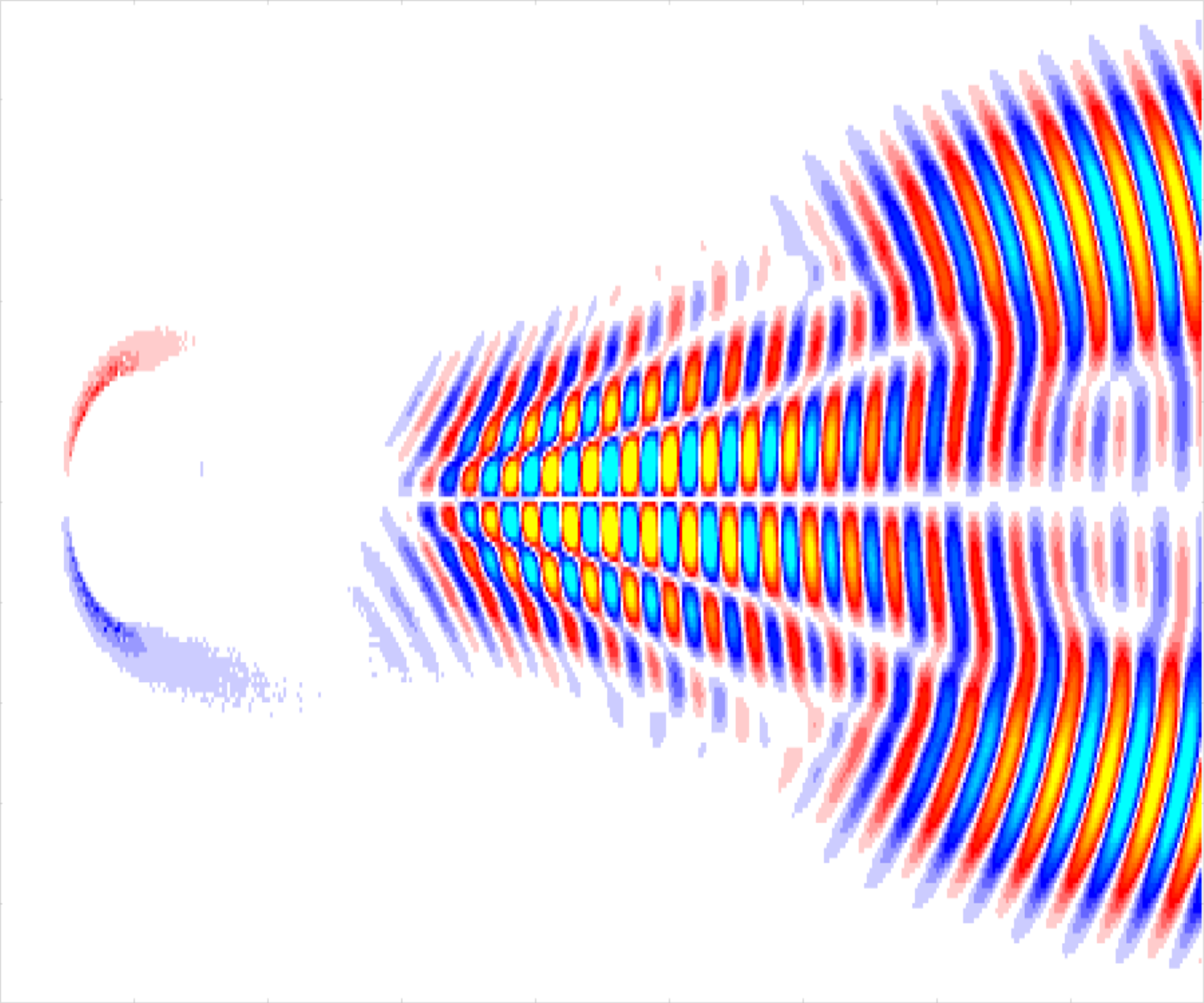}}
    \end{minipage}
    
    \begin{minipage}{1.0\textwidth}
        \raisebox{-.5\height}{\includegraphics[width=0.25\textwidth]{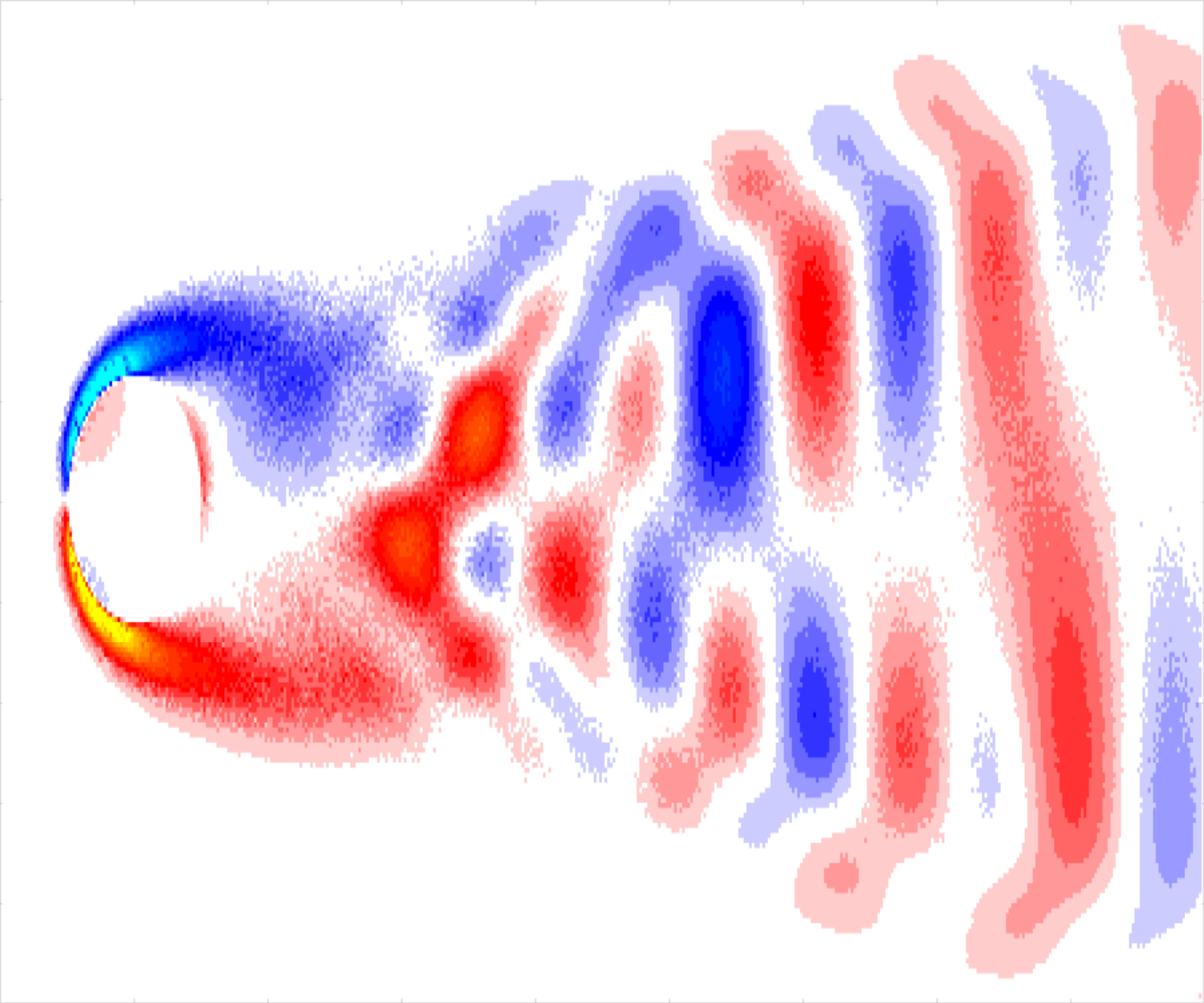}}
        \hfill
        Mode 94
        \hfill
        \raisebox{-.5\height}{\includegraphics[width=0.25\textwidth]{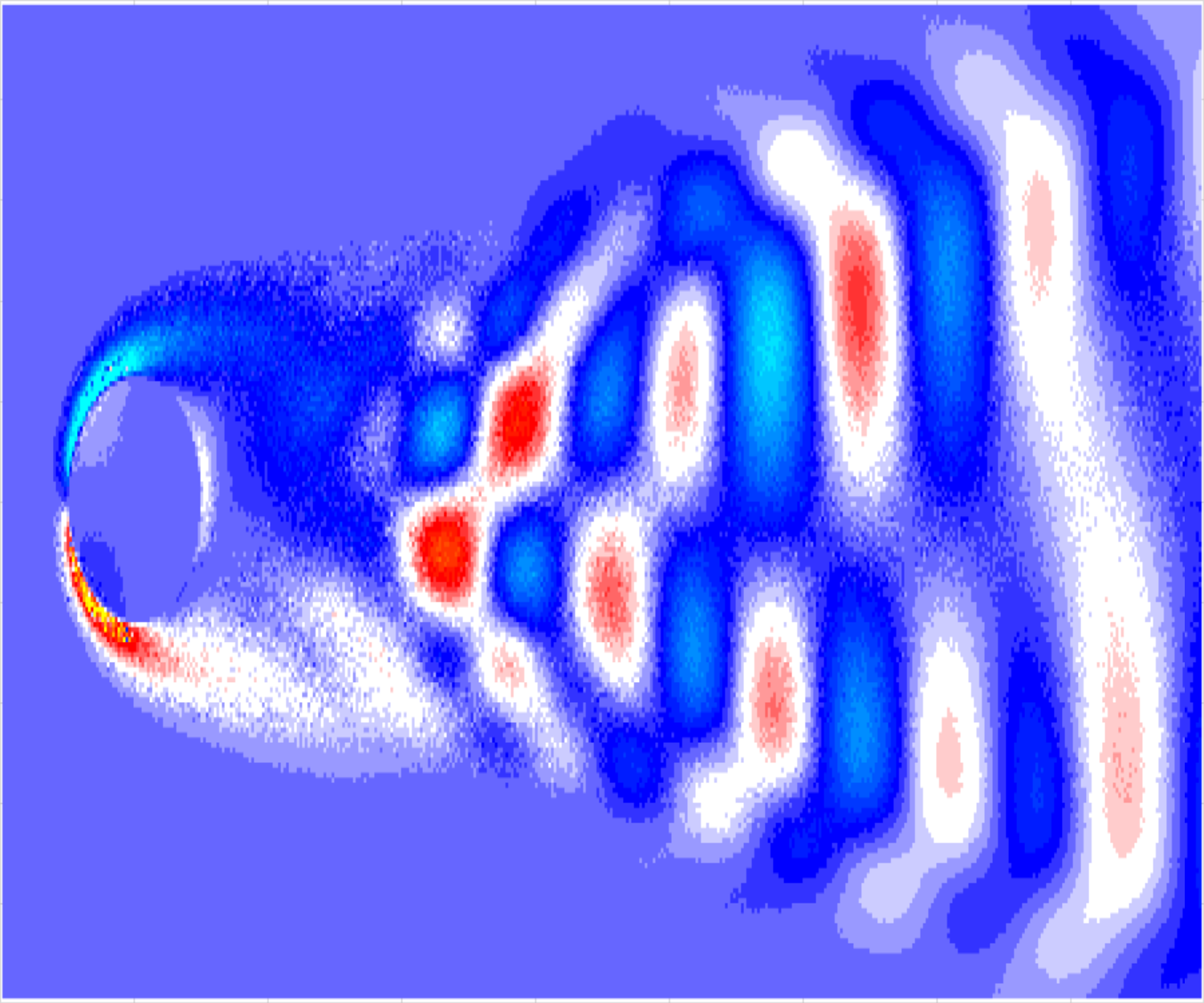}}
    \end{minipage}
    
    \begin{minipage}{1.0\textwidth}
        \raisebox{-.5\height}{\includegraphics[width=0.25\textwidth]{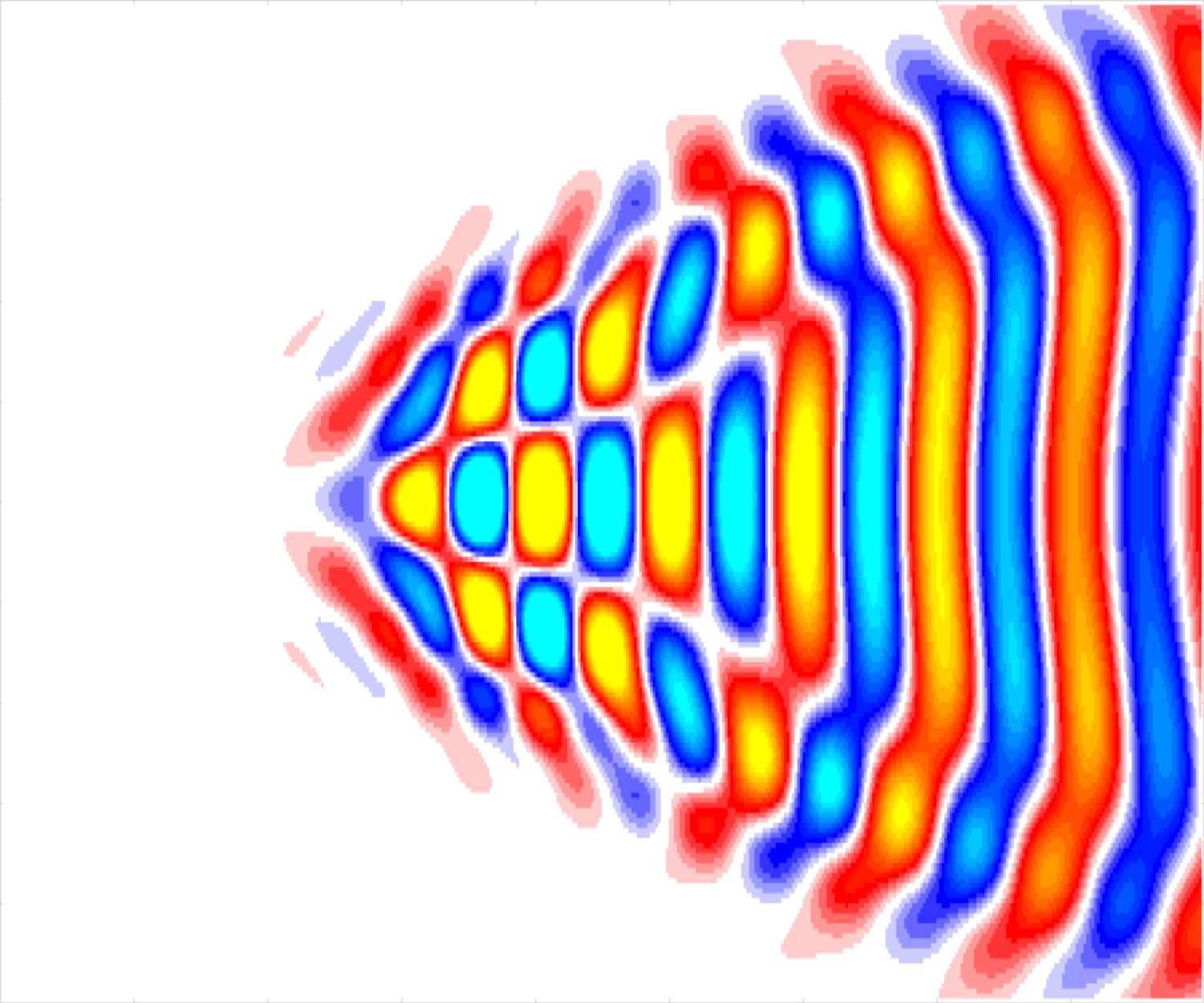}}
        \hfill
        Mode 80
        \hfill
        \raisebox{-.5\height}{\includegraphics[width=0.25\textwidth]{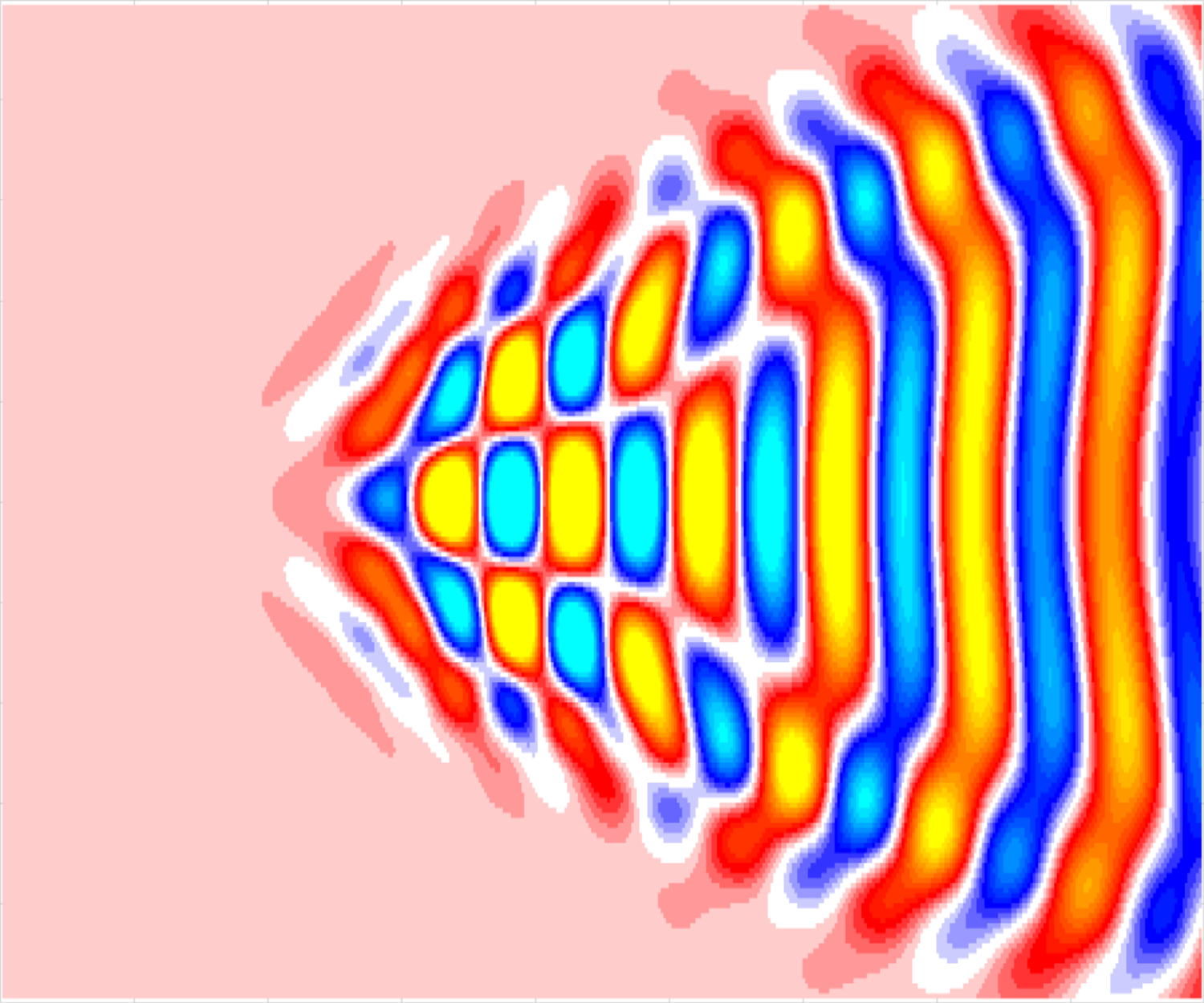}}
    \end{minipage}
	This figure presents the real and imaginary parts of a selection of five scaled Liouville DMD modes for the cylinder wake vorticity data in \cite{kutz2016dynamic}. The difference between these modes and the modes in Figure \ref{fig:liouville-modes} was anticipated for several reasons; the selection of $a=0.99$ is expected to result in slightly different modes, and there is no consistent method of ordering the Liouville modes as the significance of each mode depends not only on its magnitude, but also the associated eigenvector and initial value. 
    \label{fig:scaled-liouville-modes}
\end{figure}
\begin{figure}
    \centering
    \begin{minipage}{0.28\textwidth}
        \centering
        Snapshot of the true flow at
        
        \,
        \includegraphics[width=\textwidth]{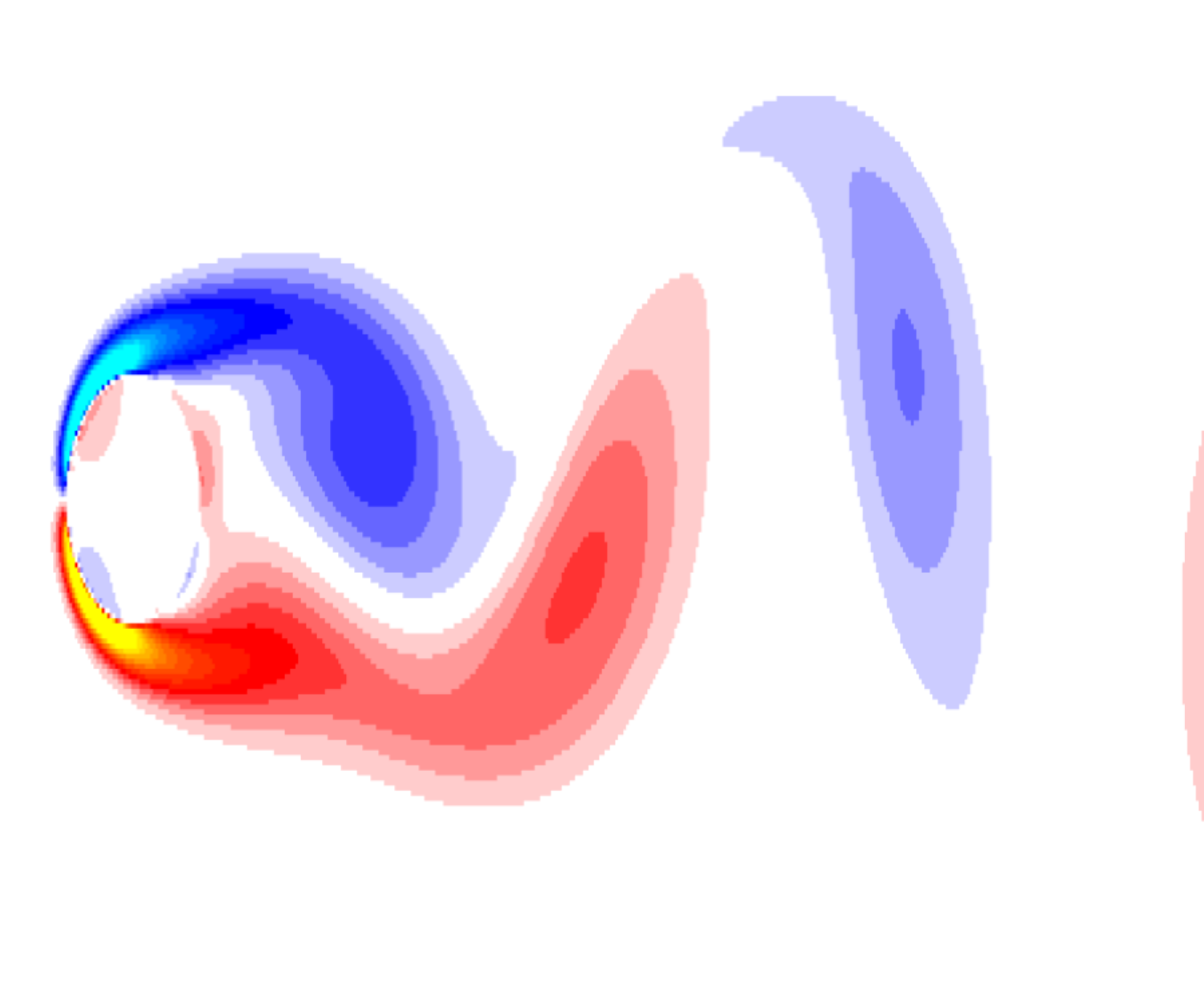}
        
        \,
        \includegraphics[width=\textwidth]{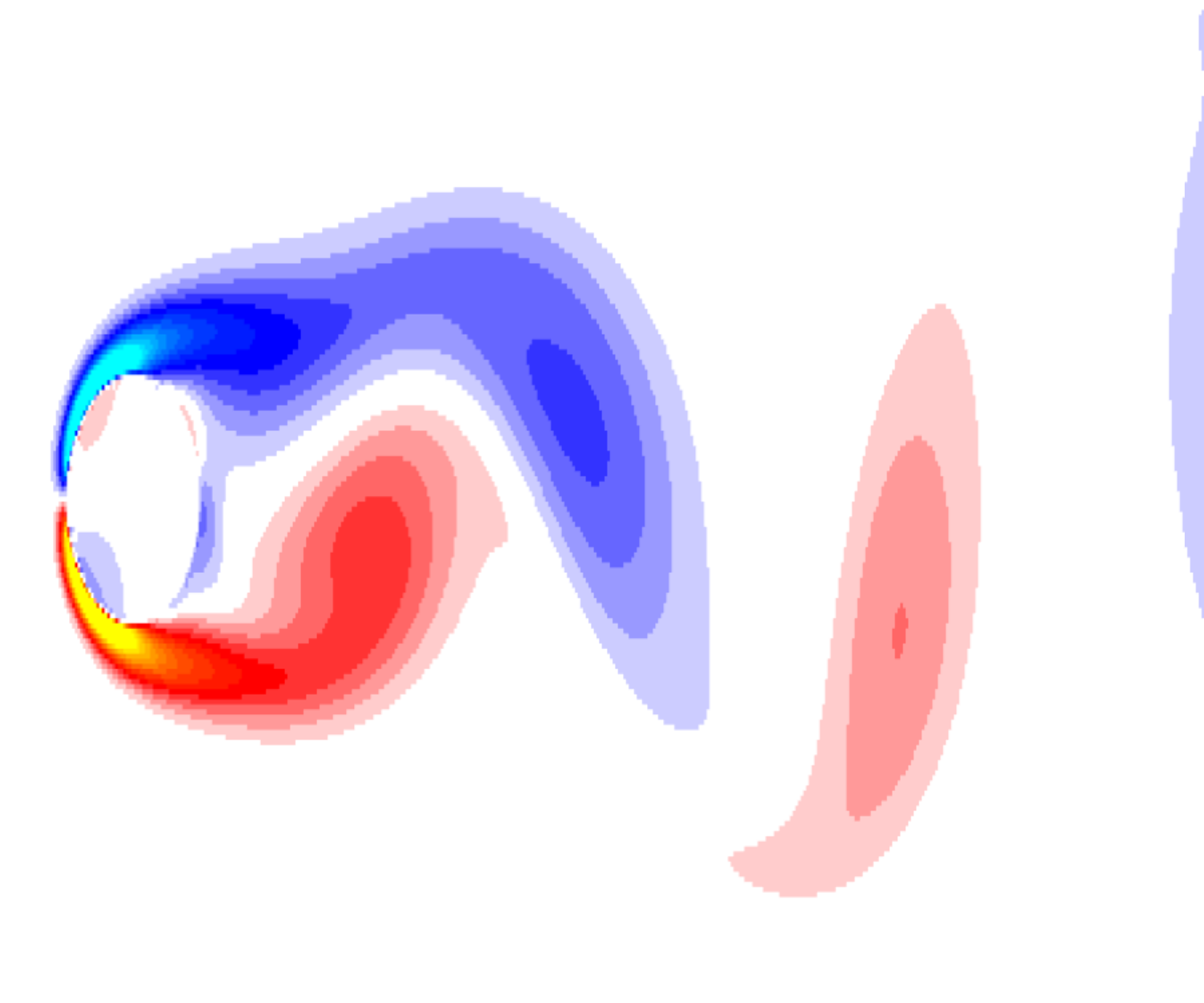}
        
        \,
        \includegraphics[width=\textwidth]{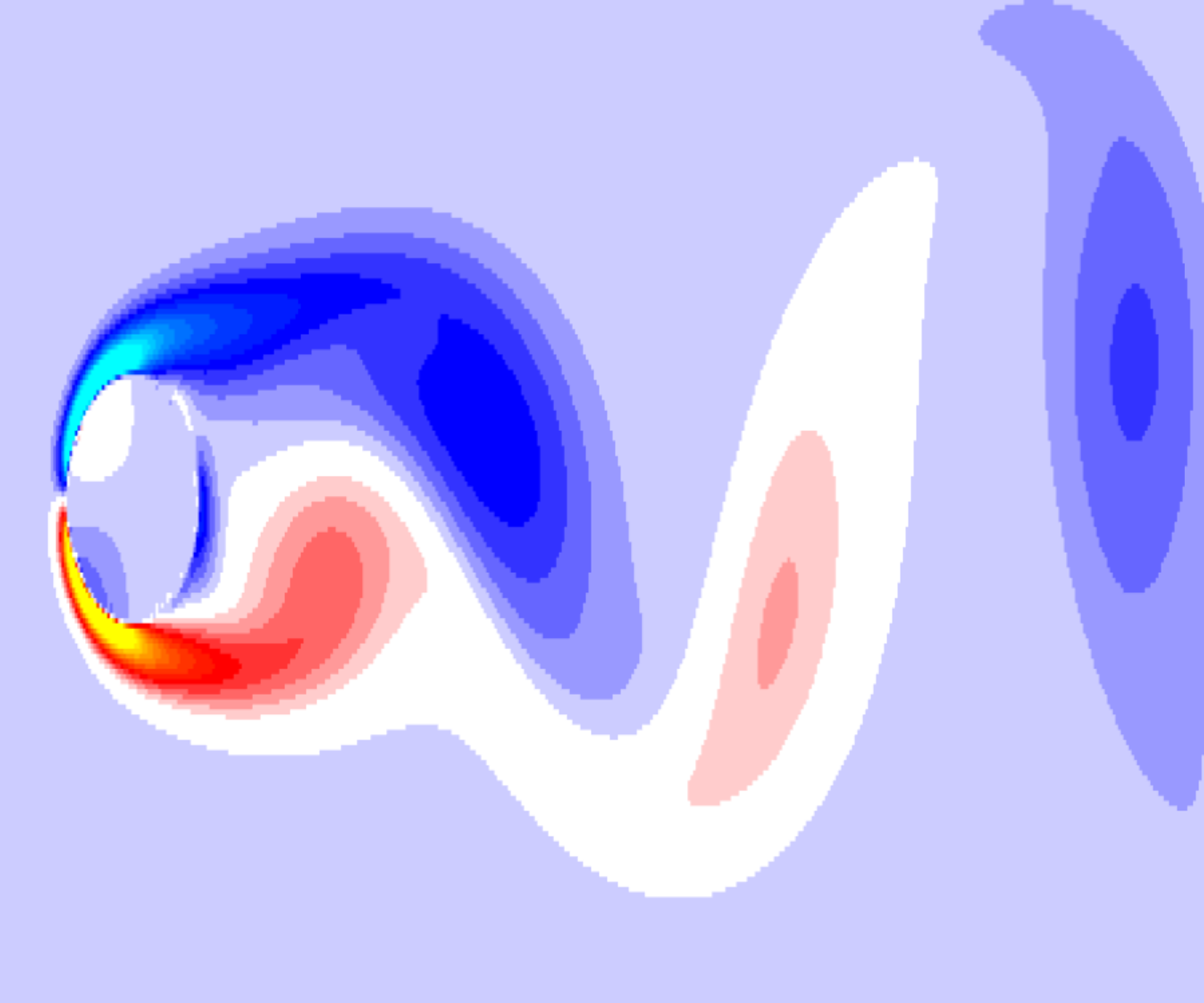}
        
        \,
        \includegraphics[width=\textwidth]{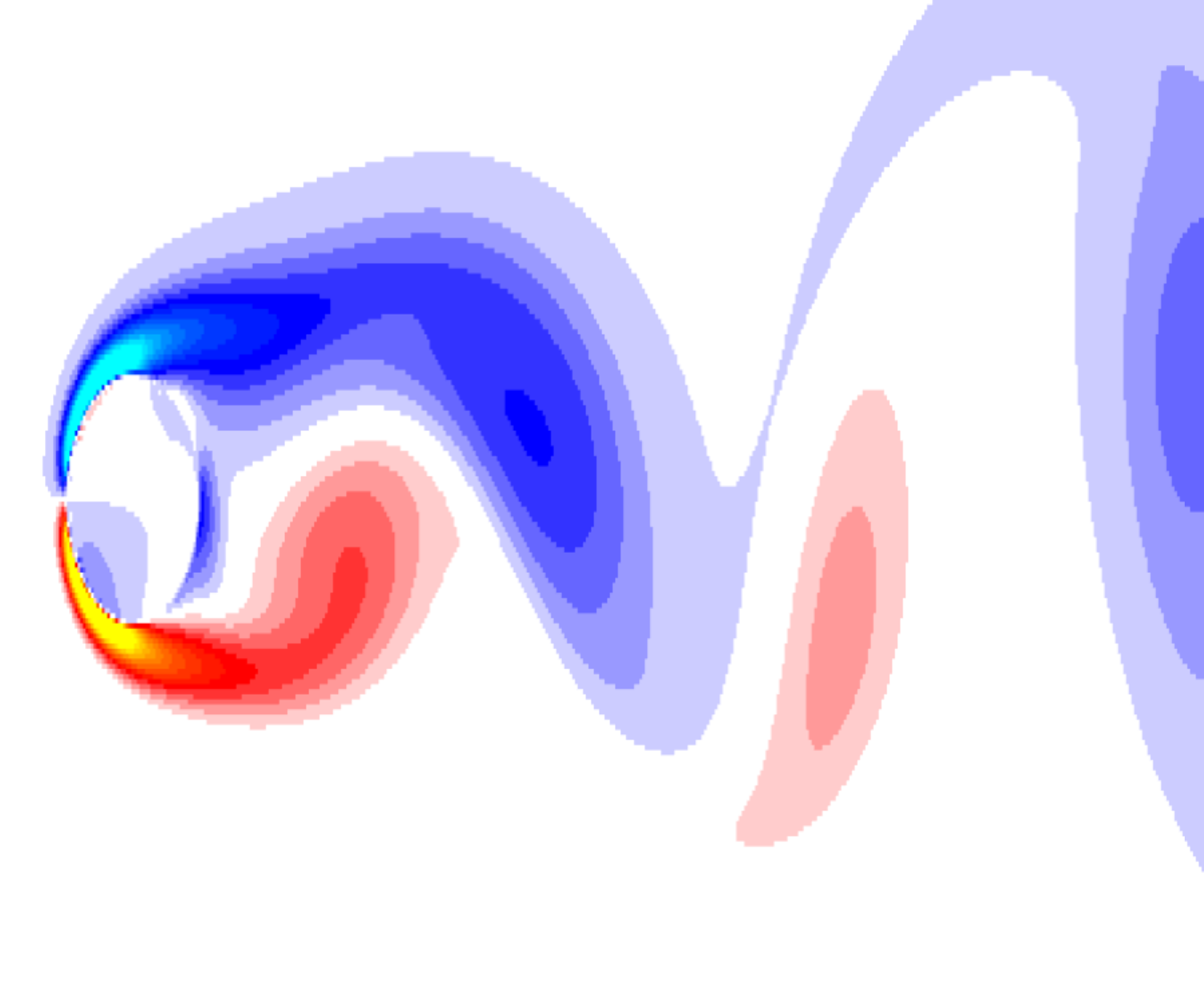}
        
        \,
        \includegraphics[width=\textwidth]{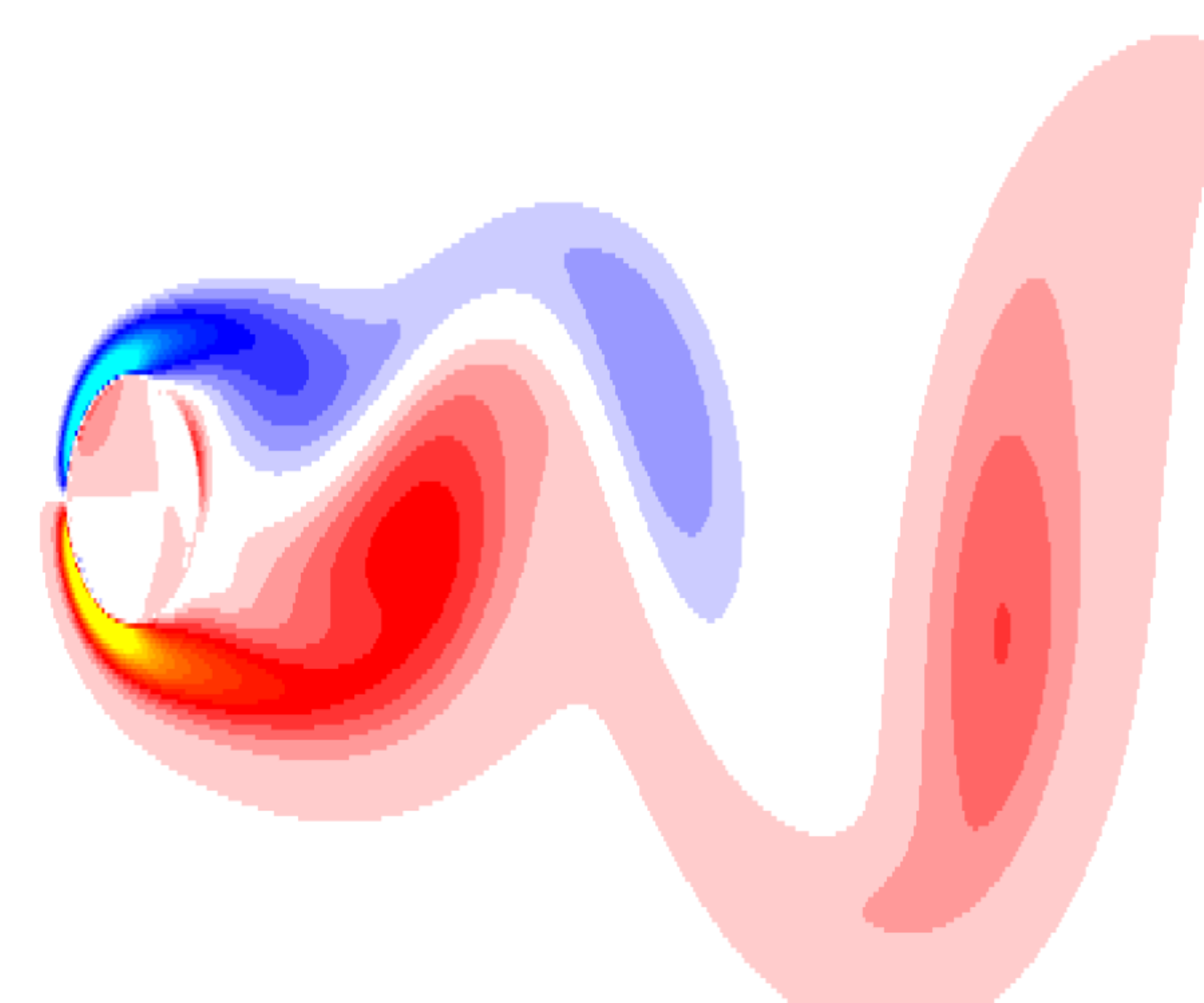}
    \end{minipage}
    \begin{minipage}{0.28\textwidth}
        \centering
        Reconstruction ($a=1$) at
        
        $t=0.18$s
        \includegraphics[width=\textwidth]{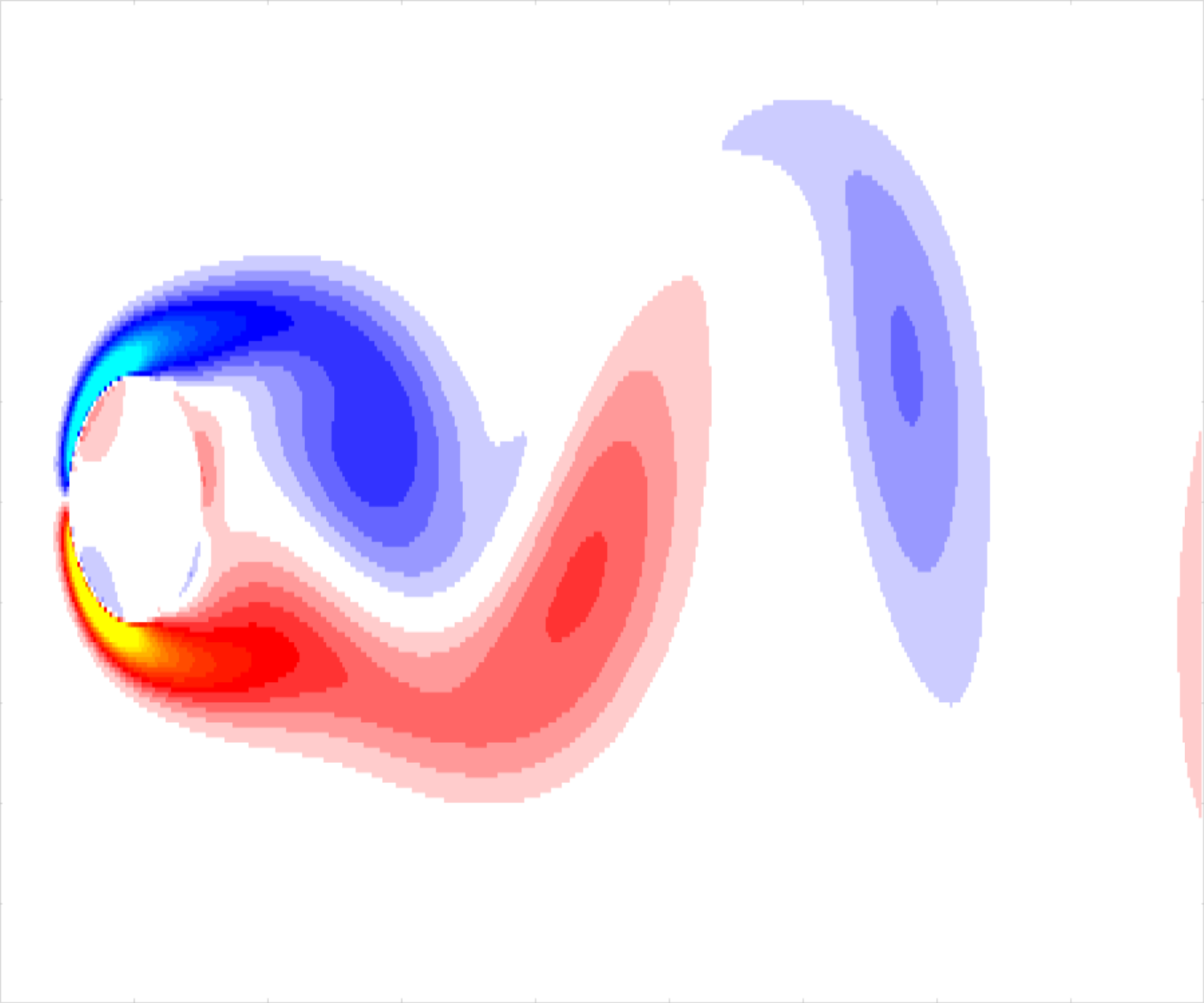}
        
        $t=1.08$s
        \includegraphics[width=\textwidth]{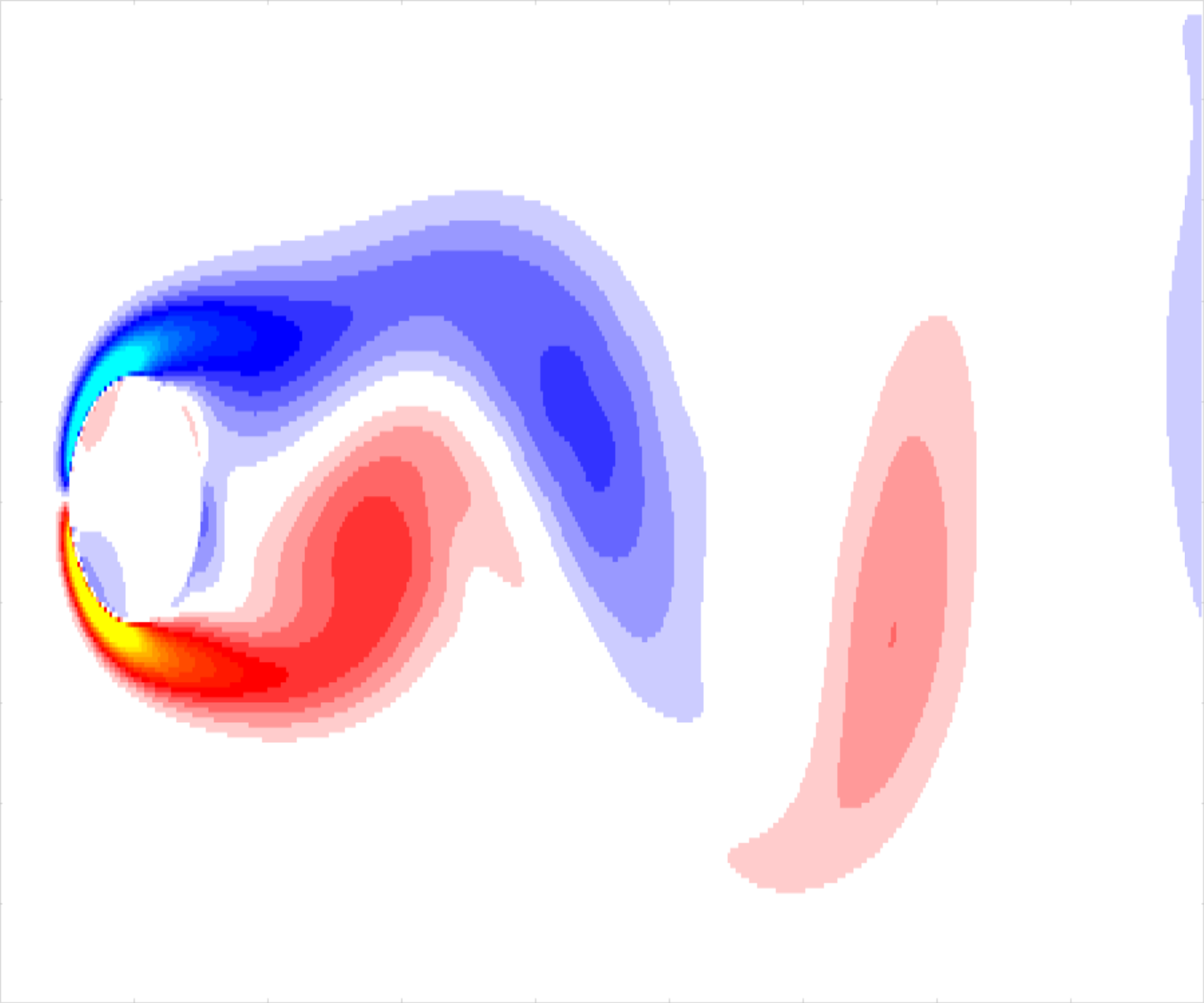}
        
        $t=1.58$s
        \includegraphics[width=\textwidth]{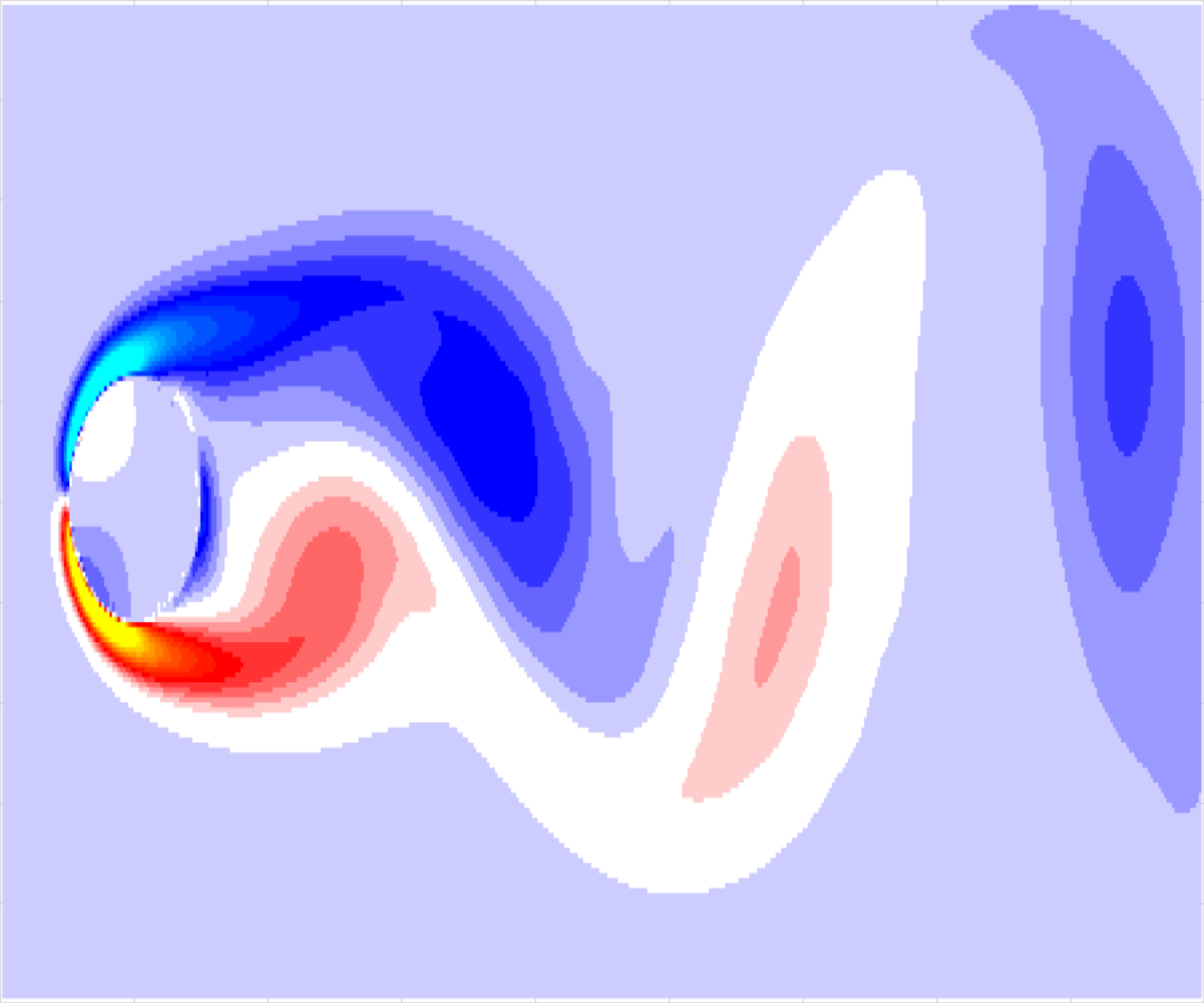}
        
        $t=2.24$s
        \includegraphics[width=\textwidth]{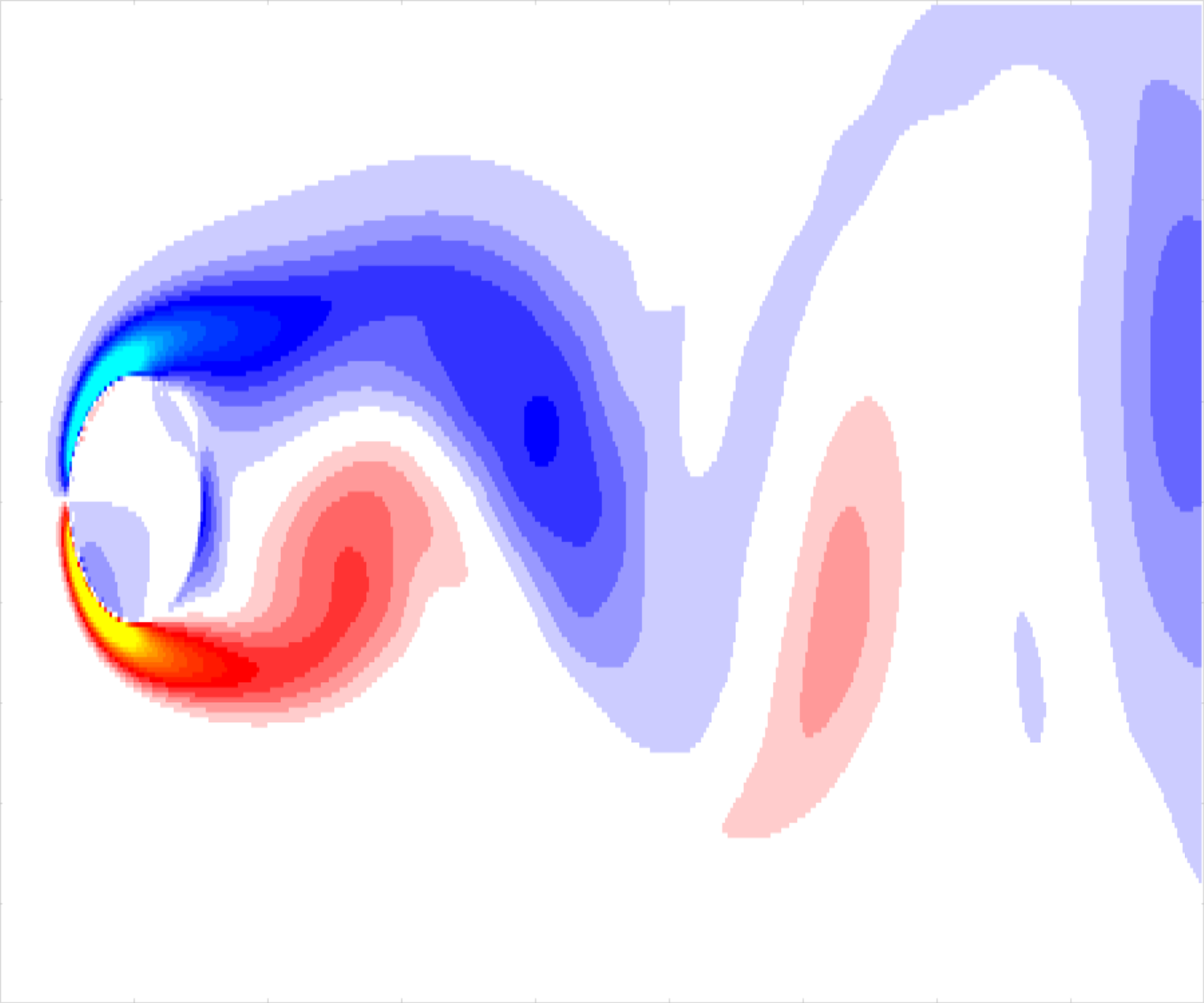}
        
        $t=2.98$s
        \includegraphics[width=\textwidth]{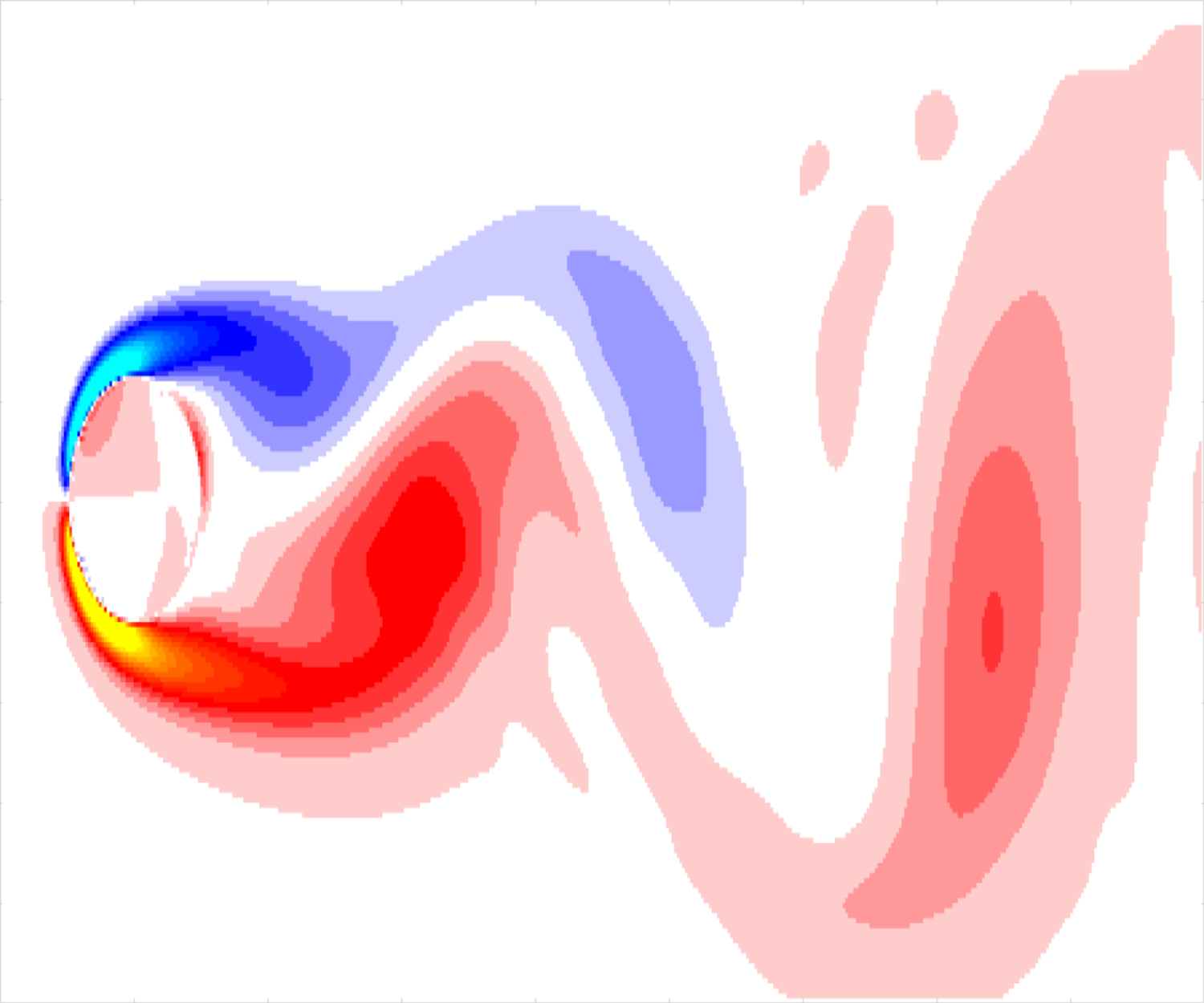}
    \end{minipage}
    \begin{minipage}{0.28\textwidth}
        \centering
        Reconstruction ($a=0.99$) at
        
        \,
        \includegraphics[width=\textwidth]{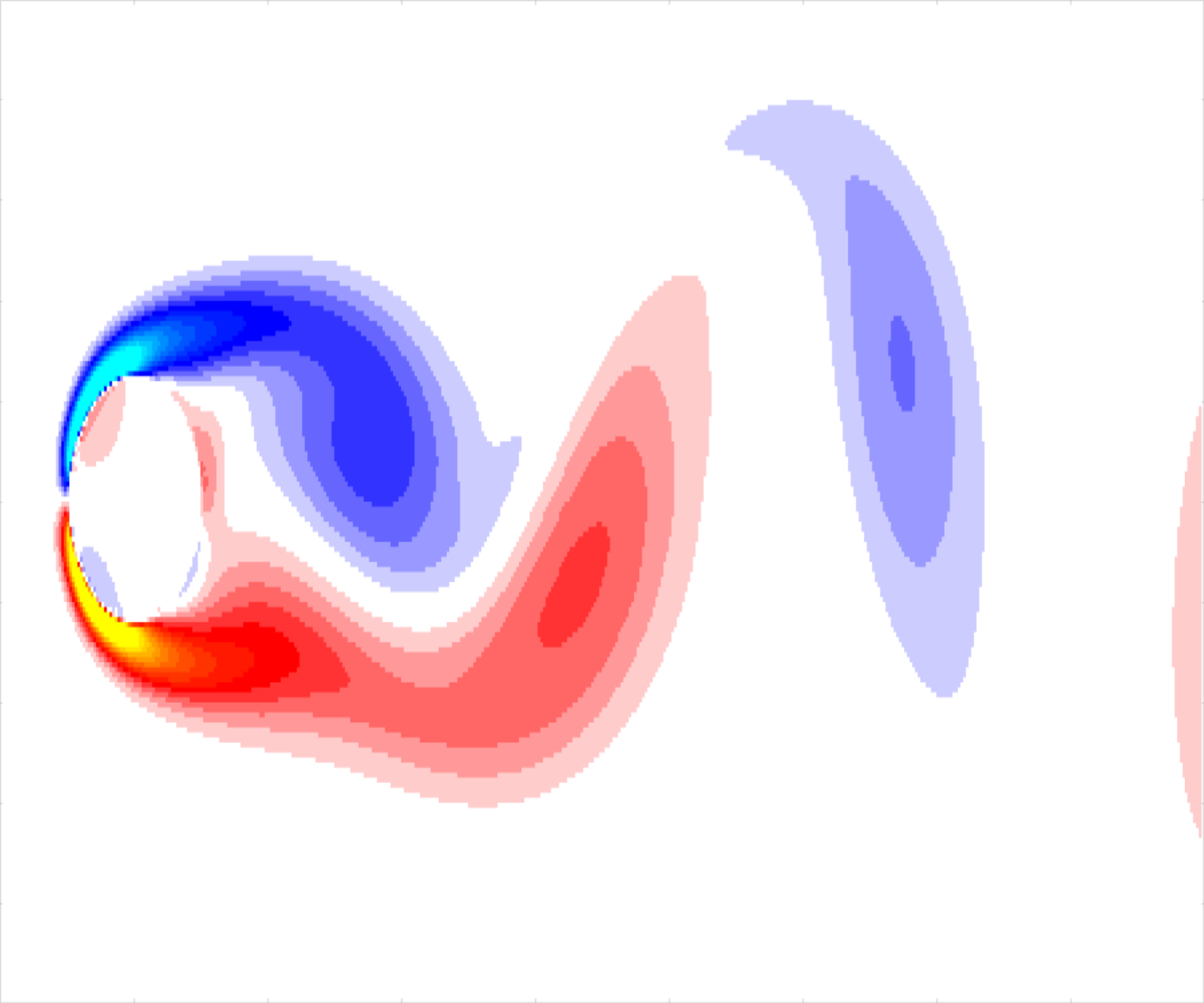}
        
        \,
        \includegraphics[width=\textwidth]{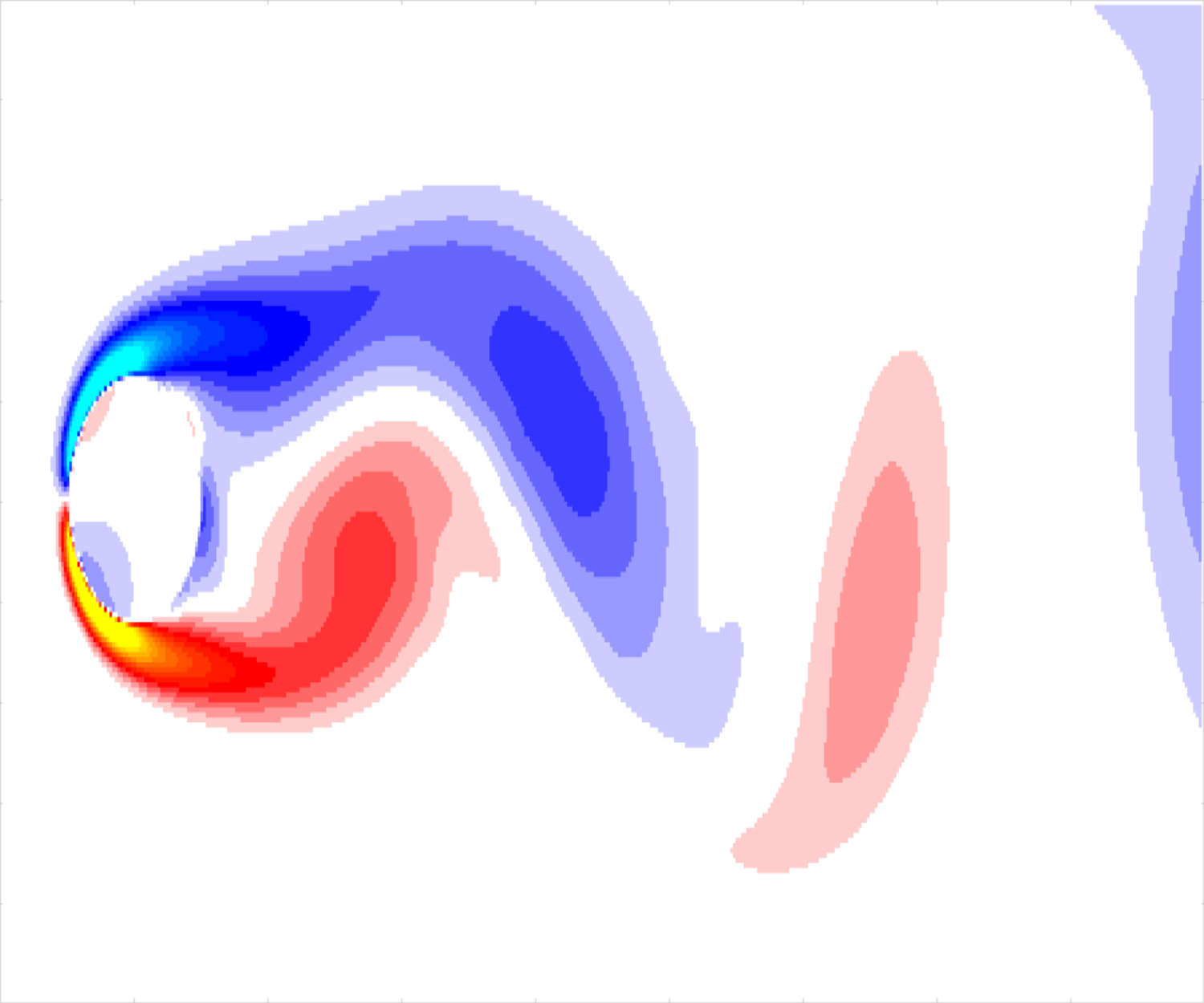}
        
        \,
        \includegraphics[width=\textwidth]{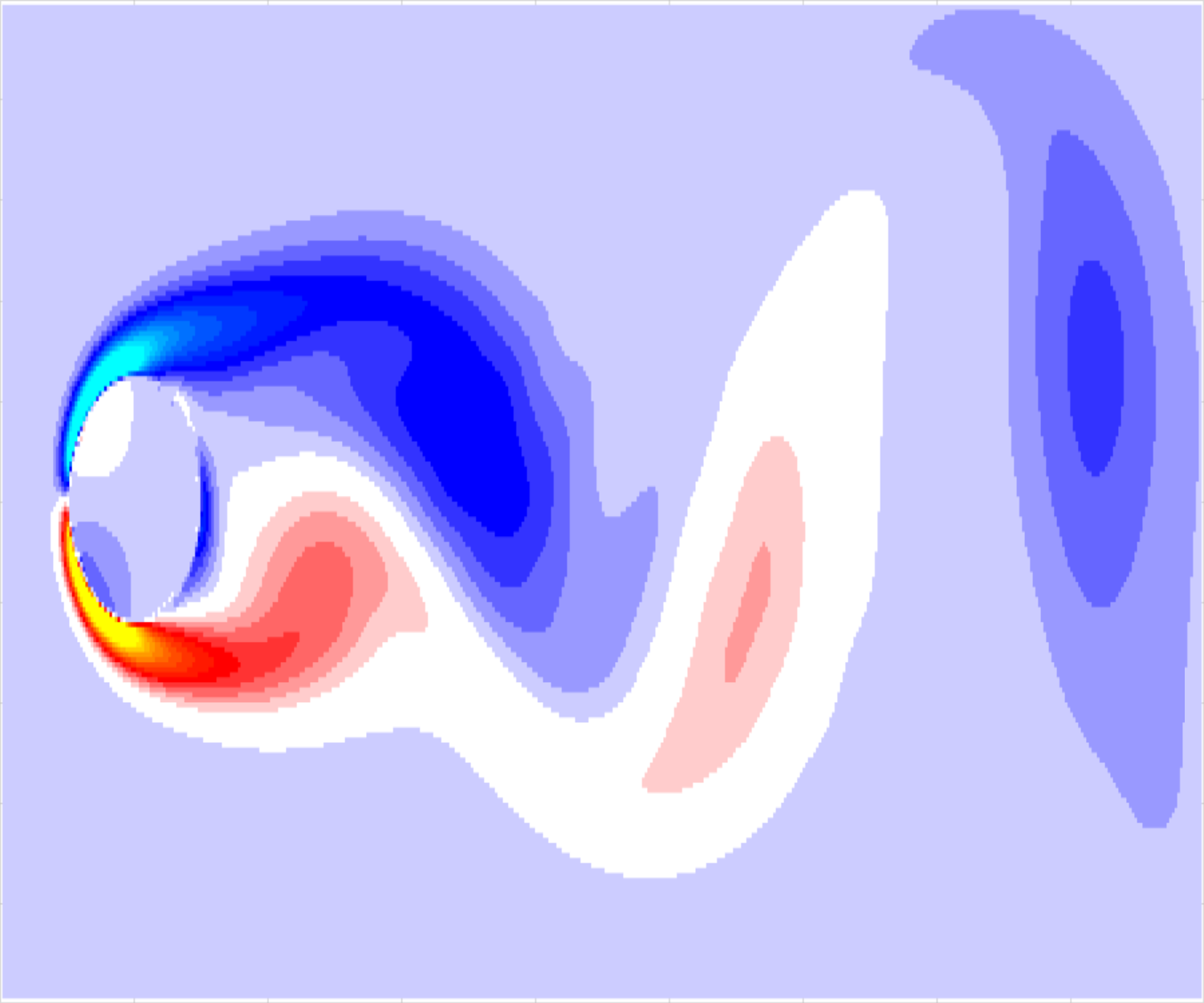}
        
        \,
        \includegraphics[width=\textwidth]{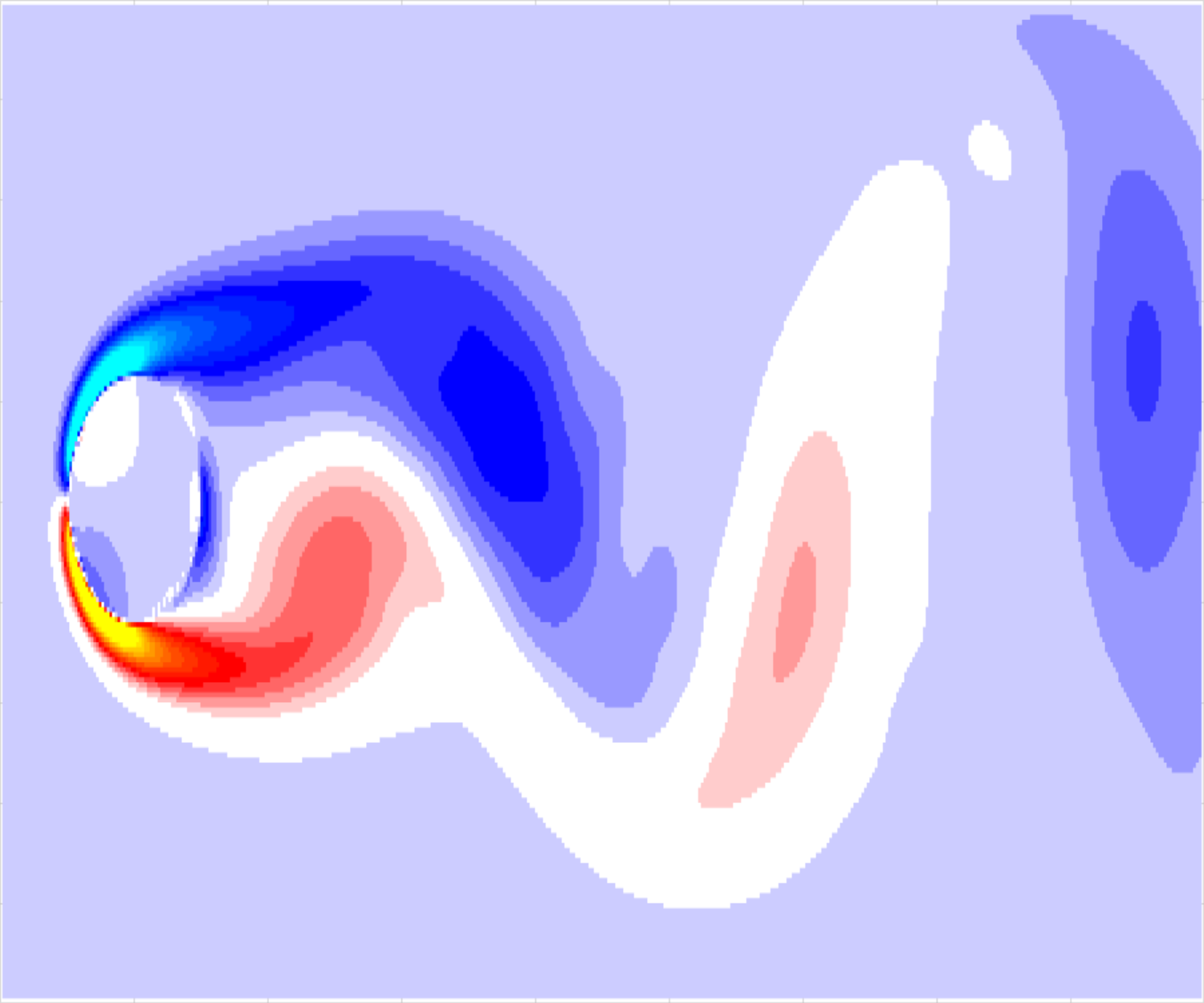}
        
        \,
        \includegraphics[width=\textwidth]{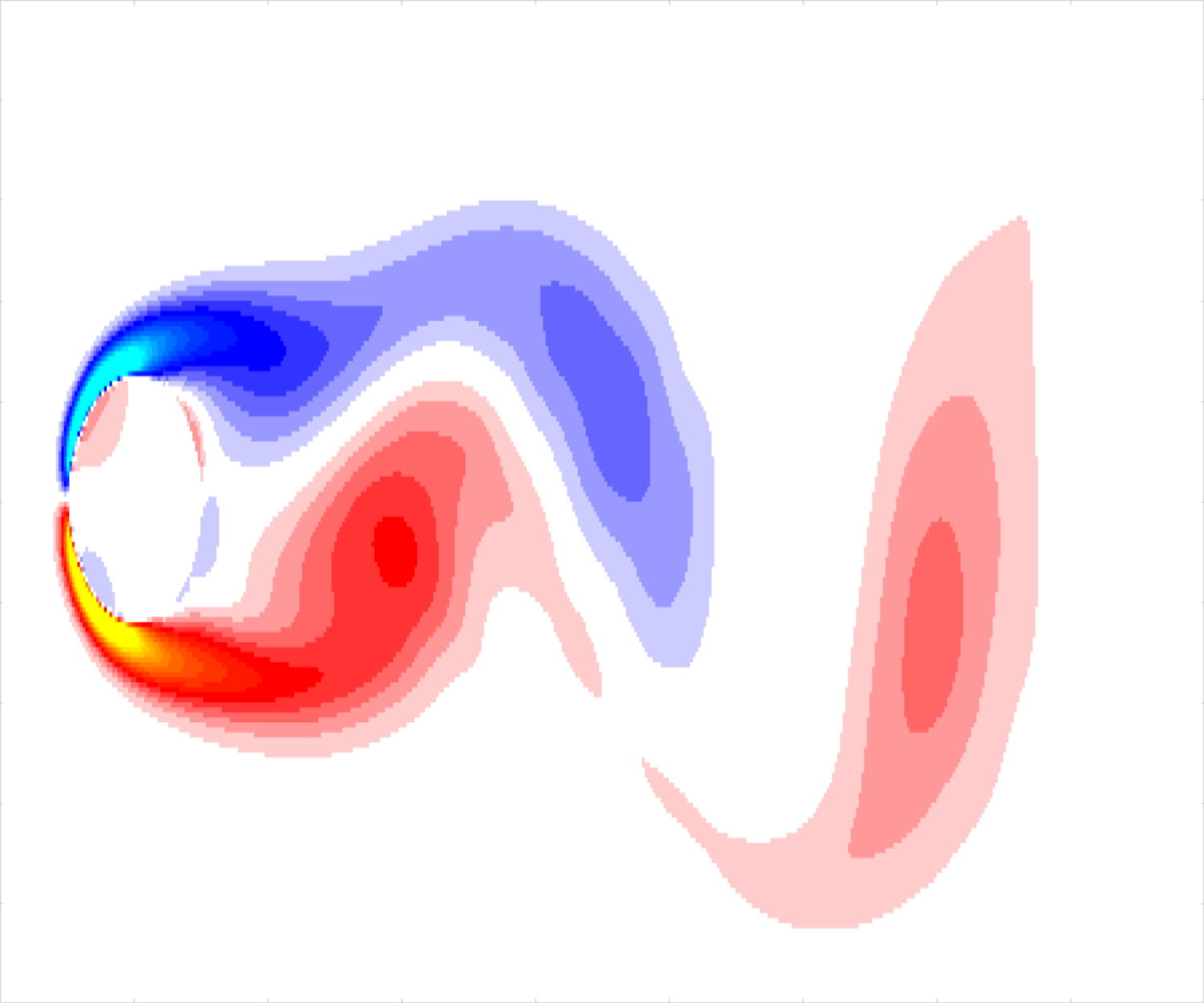}
    \end{minipage}
    \caption{Snapshots of the true flow compared with reconstruction via the Liouville DMD model in \eqref{eq:data-driven_model} and the scaled Liouville DMD model in \eqref{eq:scaled-data-driven_model} with $a=0.99$.}
    \label{fig:reconstruction}
\end{figure}

\subsection{SsVEP Dataset}

This experiment uses data from \cite{gruss2019sympathetic}. The data for this experiment was taken from an electroencephalography (EEG) recording of the visual cortex of one human participant during the active viewing of flickering images \cite{gruss2019sympathetic}. By modulating luminance or contrast of an image at a constant rate (e.g. 12Hz), image flickering reliably evokes the steady state visually evoked potential (ssVEP) in early visual cortex \cite{regan1989human,petro2017multimodal}, reflecting entrainment of neuronal oscillations at the same driving frequency. SsVEP in the current data was evoked by pattern-reversal Gabor patch flickering at 12Hz (i.e. contrast-modulated) for a trial length of $7$ seconds, with greatest signal strength originating from the occipital pole (Oz) of a 129-electrode cap. Data was sampled at $500$Hz, band-pass filtered online from $0.5$ – $48$Hz, offline from $3$ – $40$Hz, with $53$ trials retained for this individual after artifact rejection. Of these trials, the first $40$ trials were used in the continuous time DMD method and each trial was subdivided into $50$ trajectories. SsVEP data have the advantage of having an exceedingly high signal-to-noise ratio and high phase coherence due to the oscillatory nature of the signal, ideally suited for signal detection algorithms (such as brain-computer interfaces \cite{bakardjian2010optimization,bin2009online,middendorf2000brain}).

In this setting each independent trial can be used as a trajectory for a single occupation kernel. This differs from the implementation of Koopman based DMD, where most often each snapshot corresponds to a single trajectory. The continuous time DMD method was performed using the Gaussian kernel function with $\mu=50$.

Figure \ref{fig:eeg} presents the obtained eigenvalues, and Figure \ref{fig:eeg1} gives log scaled spectrum obtained from the eigenvectors. It can be seen that the spectrum has strong peaks near the $12$ Hz range, which suggests that the continuous time DMD procedure using occupation kernels can extract frequency information without using shifted copies of the trajectories as in \cite{kutz2016dynamic}.

For this example, the resultant dimensionality of Koopman based DMD makes the analysis of this data set intractable without discarding a significant number of samples.

\begin{figure}
    \centering
    \includegraphics[scale=0.3]{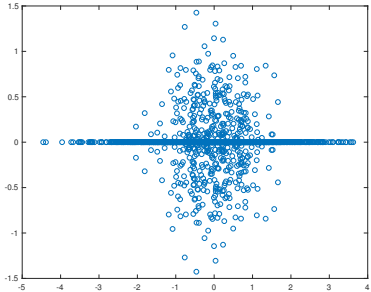}
    \caption{Eigenvalues corresponding to the ssVEP dataset from \cite{gruss2019sympathetic}.}
    \label{fig:eeg}
\end{figure}

\begin{figure}
    \centering
    \includegraphics[scale=0.3]{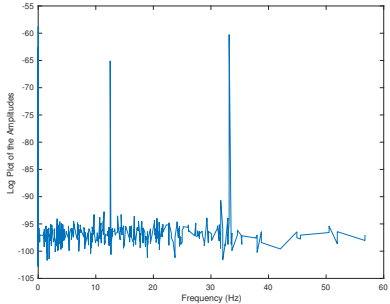}
    \caption{Rescaled spectrum obtained from the ssVEP dataset. This doesn't quite correspond to the spectrum that would be computed through the Fourier transform. However, note the significant peak around $12$ Hz, which corresponds to the ssVEP.}
    \label{fig:eeg1}
\end{figure}

\section{Discussion}
\label{sec:discussion}

\subsection{Unboundedness of Liouville and Koopman Operators}
Traditional DMD approaches aim to estimate a continuous nonlinear dynamical system by first selecting a fixed time-step and then investigate the induced discretized dynamics through the Koopman operator. The algorithm developed in this manuscript estimates the continuous nonlinear dynamics directly by employing occupation kernels, which represent trajectories via an integration functional that interfaces with the Liouville operator. That is, the principle advantage realized through DMD using Liouville operators and occupation kernels over that of kernel-based DMD and the Koopman operator is that the resulting finite-rank representation corresponds to a continuous-time system rather than a discrete time proxy. This is significant, since not all continuous time systems can be discretized for use with the Koopman operator framework. Moreover, the employment of scaled Liouville operators, many dynamics yield a compact operator over the Bargman-Fock space, which allows for norm convergence of DMD procedures.

Liouville operators are unbounded in most cases, which owes strongly to the inclusion of the derivative or gradient in its definition. However, Koopman operators are also unbounded operators in all but a few cases. In the specific instance where the selected kernel function is the exponential dot product kernel, the Koopman operators are only bounded for linear dynamics. Thus, connections between DMD and the Koopman operator necessarily invoke the theory of unbounded operators. However, large classes of both Liouville and Koopman operators can be realized where they are densely defined and closed operators over RKHSs.

\subsection{Approximating the Full State Observable}

The establishment the decomposition of the full state observable relies very strongly on the selection of RKHS. In the case of the Bargmann-Fock space, $x \mapsto (x)_i$ is a function in the space for each $i=1,\ldots,n$. However, this is not the case for the native space of the Gaussian RBF, which does not contain any polynomials in its native space. In both cases, these spaces are universal, which means that any continuous function may be arbitrarily well estimated by a function in the space with respect to the supremum norm over a compact subset. Thus, it is not expected that a good approximation of the full state observable will hold over all of $\mathbb{R}^n$, but a sufficiently small estimation error is possible over a compact workspace. 

\subsection{Scaled Liouville Operators}
One advantage of the Liouville approach to DMD is that the Liouville operators may be readily modified to generate a compact operator through the so-called scaled Liouville operator. A large class of dynamics correspond to a compact operator in this scale Liouville operator case, while Koopman operators cannot be modified in the same fashion. Allowing this compact modification, indicates that on an operator theoretic level, the study of nonlinear dynamical systems through Liouville operators allows for more flexibility in a certain sense.

The experiments presented in Section \ref{sec:experiments} demonstrate that the Liouville modes obtained with the continuous time DMD procedure using Liouville operators and occupation kernels are similar in form to the Koopman modes obtained using kernel-based extended DMD \cite{williams2015kernel}. Moreover, occupation kernels allow for trajectories to be utilized as a fundamental unit of data, which can reduce the dimensionality of the learning problem while retaining some fidelity that would be otherwise lost through discarding data.

\section{Conclusions}
\label{sec:conclusions}
By targeting the DMD decomposition on Liouville operators, which includes Koopman generators as a proper subset, a decomposition of a continuous time dynamical system can be performed directly rather than that of a discrete time proxy for the dynamical system with the Koopman operator. Moreover, by obviating the limiting process using the Koopman operators in the definition of Liouville operators, a broader class of dynamics is accessible through this method, since the requirement of forward completeness may be relaxed. The notion of occupation kernels were leveraged to enable a DMD analysis of the Liouville operator, and a scaled Liouville operator was introduced to provide a collection of compact operators which allows for Norm convergence of the DMD procedure. Two examples were presented, one from fluid dynamics and another EEG dataset, which allowed for the comparison of Liouville and scaled Liouville modes. The method presented here provides a new approach to DMD, which impacts the fundamental operator theory underlying traditional DMD with the Koopman operator.
\appendix

\section{Proofs of Theorem \ref{thm:scaled-compact} and Proposition  \ref{prop:estimate-exponential}}
\begin{proof}[Proof of Theorem \ref{thm:scaled-compact}]
The proof for the case $n = 1$ is presented to simplify the exposition. The case for $n > 1$ follows with some additional bookkeeping of the multi-index.

If $A_{x,a}$ is compact for all $|a| < 1$, then $A_{x^m,a} = A^{m}_{x,\sqrt[m]{a}}$ is compact since products of compact operators are compact. If $f(x) = \sum_{m=0}^\infty f_m x^m$ is such that $\sum_{m=0}^\infty |f_m| \| A_{x^m,a}\| < \infty$, then $A_{f,a} = \lim_{m\to\infty} \sum_{m=0}^M f_m A_{x^m,a},$ with respect the operator norm via the triangle inequality, and $A_{f,a}$ is compact since it is the limit of compact operators. Thus, it is sufficient to demonstrate that $A_{x,a}$ is compact to prove the theorem.

Let $g \in F^{2}(\mathbb{R})$, then $g(x) = \sum_{m=0}^\infty g_m \frac{x^m}{\sqrt{m!}}$ with norm $\| g \|_{F^{2}(\mathbb{R})}^2 = \sum_{m=0}^\infty |g_m|^2 < \infty.$ Applying the scaled Liouville operator, $A_{x,a}$, yields \[A_{x,a} g(x) = axg'(ax) = \sum_{m=0}^\infty g_{m} a^m m \frac{x^m}{\sqrt{m!}}.\] Hence, $\| A_{x,a} g \|_{F^(\mathbb{R})}^2 = |a|^{2m} m^2 |g_{m}|^2 < \infty$ as for large enough $m$, $|a|^{2m} m^2 < 1$. Hence, $A_{x,a}$ is everywhere defined and by the closed graph theorem $A_{x,a}$ is bounded.

As $|a|^{m} m^2 \to 0$, there is an $M$ such that for all $m > M$, $|a|^{m} m^2 < 1$.  Let $P_M$ be the projection onto $\vspan\{ 1, x, x^2, \ldots, x^M\}$. Now consider
\begin{align*}
\|(A_{x,a} - A_{x,a} P_M) g\|^2 &= \sum_{m=M+1}^\infty |g_{m}|^2 |a|^{2m} m^2\\
&\le \sum_{m=M+1}^\infty |g_{m}|^2 |a|^{m} \\
&\le |a|^M \sum_{m=M+1}^\infty |g_{m}|^2 |a|^{m-M} \\
&\le |a|^M \sum_{m=M+1}^\infty |g_{m}|^2 \le |a|^M \| g \|_{F^2(\mathbb{R})}^2.
\end{align*}

Hence, the operator norm of $(A_{x,a} - A_{x,a} P_M)$ is bounded by $|a|^{M/2}$, and as $|a| < 1$,  $A_{x,a} P_m \to A_{x,a}$ in the operator norm. $P_m$ is finite rank and therefore compact. It follows that $A_{x,a} P_m$ is compact, since compact operators form an ideal in the ring of bounded operators. Thus, $A_{x,a}$ is compact as it is the limit of compact operators.\qed
\end{proof}

\begin{proof}[Proof of Proposition \ref{prop:estimate-exponential}]
Suppose that $x(t)$ remains in a compact set $D \subset \mathbb{R}^n$. Since $\phi_{m,a} \in H$ and $H$ consists of twice continuously differentiable functions, there exists $M_1,M_2,F > 0$ such that \begin{gather*}\sup_{x \in D} \| f(x) \| < F \quad \sup_{x \in D}, \| \nabla \phi_{m,a}(x) \| < M_{1,a}, \text{ and} \quad \sup_{x \in D} \| \nabla^2\phi_{m,a}(x) \| < M_{2,a}.
\end{gather*}
First, it is necessary to demonstrate that $M_{1,a}$ and $M_{2,a}$ may be bounded independent of $a$. For each $i,j=1,\ldots,n$ and $y \in \mathbb{R}^n$, the functionals $g \mapsto \frac{\partial}{\partial x_i} g(y)$ and $g \mapsto \frac{\partial^2}{\partial x_i \partial x_j} g(y)$ are bounded (cf. \cite{steinwart2008support}). Setting, $k_{y} = K(\cdot,y)$, it can be seen that the functions $\frac{\partial}{\partial x_i} k_y$ and $\frac{\partial^2}{\partial x_i \partial x_j} k_y$ are the unique functions that represent these functionals through the inner product of the RKHS (cf. \cite{steinwart2008support}). As $\phi_{m,a}$ is a normal vector, $\| \phi_{m,a} \|_H = 1$, and by Cauchy-Schwarz 
\begin{align}\label{eq:nabla-bound}
\| \nabla \phi_{m,a}(y) \|_2 &= \sqrt{ \sum_{i=1}^n \left(\frac{\partial}{\partial x_i} \phi_{m,a}(y) \right)^2}\nonumber\\
& = \sqrt{\sum_{i=1}^n \left(\left\langle \phi_{m,a}, \frac{\partial}{\partial x_i} k_y \right\rangle_H \right)^2}\nonumber\\
& \le \sqrt{\sum_{i=1}^n \left\| \phi_{m,a}\right\|_H^2 \left\|\frac{\partial}{\partial x_i} k_y \right\|_H^2}\nonumber\\
&= \sqrt{ \sum_{i=1}^n \left\|\frac{\partial}{\partial x_i} k_y \right\|_H^2 }.
\end{align}
\eqref{eq:nabla-bound} is bounded over $D$ as $x \mapsto \frac{\partial}{\partial x_i} k_y(x)$ is continuous. Thus, $M_{1,a}$ is bounded independent of $a$. A similar argument may be carried out for $M_{2,a}$. Let $M_1$ and $M_2$ be the respective bounding constants.

Note that
\begin{equation*}
    \frac{\partial}{\partial t} \phi_{m,a}(ax(t))
    = a \nabla \phi_{m,a}(a x(t)) f(x(t))
    = A_{f,a} \phi_{m,a}(x(t))
    = \mu_{m,a} \phi_{m,a}(x(t)).
\end{equation*}
Then by the mean value inequality, Cauchy-Schwarz, and the bounds given above,
\begin{gather*}
    \left|\frac{\partial}{\partial t} \phi_{m,a}(ax(t)) - \frac{\partial}{\partial t} \phi_{m,a}(x(t))\right|\\
    = \left|a \nabla \phi_{m,a}(a x(t)) f(x(t)) - \nabla \phi_{m,a}(x(t)) f(x(t))\right|\\
    \le F\left\|a \nabla \phi_{m,a}(a x(t)) - a\nabla\phi_{m,a}(x(t))  + a\nabla\phi_{m,a}(x(t)) - \nabla \phi_{m,a}(x(t)) \right\|_2\\
    \le |a|F\left\| \nabla \phi_{m,a}(a x(t)) - \nabla\phi_{m,a}(x(t)) \right\|_2 + F|a-1| M_1 \| x(t)\|_2\\
    \le |a||a-1| M_2 F \|x(t)\|_2 + |a-1| M_1 F \|x(t)\|_2 = O(|a-1|).
\end{gather*}

Setting $\epsilon_a(t) := \frac{\partial}{\partial t} \phi_{m,a}(ax(t)) - \frac{\partial}{\partial t} \phi_{m,a}( x(t))$, it follows that $\sup_{0 \le t \le T} \| \epsilon_a(t) \|_2 = O(|a-1|)$.
Hence,
\begin{align*}
    \mu_{m,a} \phi_{m,a}(x(t)) &= \frac{\partial}{\partial t} \phi_{m,a}(a x(t))\\
    &= \frac{\partial}{\partial t} \phi_{m,a}(x(t)) + \epsilon(t),
\end{align*}
and \[ \phi_{m,a}(x(t)) = e^{\mu_{m,a}t} \phi_{m,a}(x(0)) - e^{\mu_{m,a}t} \int_0^t e^{-\mu_{m,a} \tau} \epsilon(\tau) d\tau. \]
As the time interval is fixed to $[0,T]$, $e^{\mu_{m,a}t} \int_0^t e^{-\mu_{m,a} \tau} \epsilon(\tau) d\tau = O(|a-1|),$ since $\mu_{m,a}$ is bounded with respect to $a$.\qed
\end{proof}

\begin{proof}[Proof of Theorem \ref{thm:DMD-norm-convergence}]
The following proof is more general than what is indicated in the theorem statement of Theorem \ref{thm:DMD-norm-convergence}. In fact, for any compact operator, $T$, and any set $\{ g_i \}_{i=1}^\infty$ such that $\overline{\vspan(\{g_i\}_{i=1}^\infty)} = H$, the sequence of operators $P_{\alpha_M} T P_{\alpha_M} \to T$ in norm, where $P_{\alpha_M}$ is the projection onto $\vspan(\{g_i\}_{i=1}^M)$. Henceforth, it will be assumed that $\{ g_i \}_{i=1}^\infty$ is an orthonormal basis for $H$, since given any complete basis in $H$, an orthonormal basis may be obtained via the Gram-Schmidt process.

First note that every compact operator has a representation as $T = \sum_{i=1}^\infty \lambda_i \langle \cdot, v_i \rangle_H u_i$, where $\{ v_i \}$ and $\{ u_i \}$ are orthonormal collections of vectors (functions) in $H$, and $\{ \lambda_i \}_{i=1}^\infty \subset \mathbb{C}$ are the singular values of $T$. If $T_M := \sum_{i=1}^M \lambda_i \langle \cdot, v_i \rangle_H u_i$ then $T_M \to T$ as $M \to \infty$ in the operator norm.

Suppose that $\epsilon > 0$. Select $M$ such that $\| T - T_M\| < \epsilon$, and select $N$ such that for all $n > N$,
\[
\|u_i - P_n u_i\|_H < \frac{\epsilon}{\sum_{i=1}^M |\lambda_i|^2} \text{ and } \|v_i - P_n v_i\|_H < \frac{\epsilon}{\sqrt{M} \left(\sum_{i=1}^M |\lambda_i|^2\right)^{1/2}}\]
for all $i=1,\ldots,M$. Let $g \in H$ be arbitrary.

Now consider,
\begin{gather*}
    \| Tg - P_n T P_n g\|_H = \| Tg - T_M g + T_M g - P_n T P_n f\|_H\\
    \le \| T - T_M\| \|g\|_H + \|T_M g - P_n T P_n g\|_H
    \le \epsilon \| g\|_H + \|T_M g - P_n T P_n g\|_H.
\end{gather*}
The second term after the inequality may be expanded as
\begin{gather*}
    \|T_M g - P_n T P_n g\|_H \le \|T_M g - P_n T_M g\|_H + \|P_n T_M g - P_n T P_n g\|_H\\
    \|T_M g - P_n T_M g\|_H + \|T_M g - T P_n g\|_H\\
    \le \| T_M g - P_n T_M g\|_H + \|T_M g - T_M P_n g\|_H + \|T_M P_n g - T P_n g\|_H\\
    \le \| T_M g - P_n T_M g\|_H + \|T_M g - T_M P_n g\|_H + \epsilon \| g\|_H.
\end{gather*}
Now the objective is to demonstrate that both $\| T_M g - P_n T_M g\|_H$ and $\|T_M g - T_M P_n g\|_H$ are proportional to $\epsilon \| g\|_H$. Note that
\begin{gather*}
    \| T_M g - P_n T_M g\|_H = \left\| \sum_{i=1}^M \lambda_i \langle g, v_i \rangle_H (u_i - P_n u_i)\right\|_H\\
    \le \sum_{i=1}^M |\lambda_i| |\langle g, v_i \rangle_H| \| u_i - P_n u_i \|_H\\
    \le \sqrt{\sum_{i=1}^M |\langle g, v_i\rangle_H|^2} \left( \sum_{i=1}^M |\lambda_i|^2 \| u_i - P_n u_i \|_H^2 \right)^{1/2}
    \\
    \le \|g\|_H \left( \sum_{i=1}^M |\lambda_i|^2 \| u_i - P_n u_i \|_H^2 \right)^{1/2} \le \epsilon \|g\|_H,
\end{gather*}
and
\begin{gather*}
    \|T_M(g - P_n g) \|_H = \left\| \sum_{i=1}^M \lambda_i \langle g - P_n g, v_i\rangle_H u_i \right\|\\
    \le \left\| \sum_{i=1}^M \lambda_i \left\langle \sum_{j=n+1}^\infty \langle g, g_j \rangle_H g_j, v_i \right\rangle_H u_i\right\|\\
    \le \sum_{j=n+1}^\infty |\langle g, g_j\rangle_H| \left(\sum_{i=1}^M |\lambda_i| |\langle g_j, v_i\rangle_H|\right)\\
    \le \left( \sum_{j=n+1}^\infty |\langle g, g_i\rangle_H|^2\right)^{1/2} \left( \sum_{j=n+1}^\infty \left( \sum_{i=1}^M |\lambda_i| |\langle g_i, v_i \rangle_H|\right)^{2}\right)^{1/2}\\
    \le \| g\|_H \left( \sum_{j=n+1}^\infty \left( \sum_{i=1}^M |\lambda_i|^2 \right) \left(\sum_{i=1}^M |\langle g_j, v_i\rangle_H|^2\right)\right)^{1/2}\\
    \le \| g\|_H \left(\sum_{i=1}^M |\lambda_i|^2\right)^{1/2} \left( \sum_{i=1}^M \sum_{j=n+1}^\infty |\langle g, v_i\rangle_H|^2 \right)^{1/2}\\
    = \| g\|_H \left(\sum_{i=1}^M |\lambda_i|^2\right)^{1/2} \left( \sum_{i=1}^M \|v_i - P_n v_i\|^2 \right)^{1/2}
    \le  \epsilon \|g\|_H.
\end{gather*}

Thus, for every $\epsilon > 0$, there is an $N$ such that for all $n > N$, $\|T g - P_n T P_n g\|_H \le 4\epsilon \| g\|_H.$ Hence, it follows that $\| T - P_n T P_n \| \le 4\epsilon$. Thus, as $n \to \infty$, $P_n T P_n \to T$ in the operator norm.\qed
\end{proof}

\section*{Acknowledgments}
This research was supported by the Air Force Office of Scientific Research (AFOSR) under contract numbers FA9550-20-1-0127, FA9550-15-1-0258, FA9550-16-1-0246, and FA9550-18-1-0122, the Air Force Research Laboratory (AFRL) under contract number FA8651-19-2-0009, and the Office of Naval Research (ONR) under contract N00014-18-1-2184. Any opinions, findings and conclusions or recommendations expressed in this material are those of the author(s) and do not necessarily reflect the views of the sponsoring agencies.
\bibliographystyle{spmpsci}
\bibliography{references}
\end{document}